\theoremstyle{plain}
\newtheorem{thm}{Theorem}[section]
\newtheorem{prop}[thm]{Proposition}
\newtheorem{lem}[thm]{Lemma}
\theoremstyle{definition}
\newtheorem{exmp}{Example}[section]
\newtheorem{defn}{Definition}[section]
\newtheorem{rem}{Remark}[section]
\def\bsy{\boldsymbol}
\def\ch #1{\mathrm{Char}\,#1}
\def\CV{{\mathcal{V}}}
\def\CH{{\mathcal{H}}}
\def\CE{{\mathcal{E}}}
\def\CI{{\mathcal{I}}}
\def\CC{{\mathcal{C}}}
\def\CZ{{\mathcal{Z}}}
\def\CU{{\mathcal{U}}}
\def\a{\alpha}
\def\b{\beta}
\def\D{\Delta}
\def\e{\varepsilon}
\def\g{\gamma}
\def\G{\Gamma}
\def\l{\lambda}
\def\L{\Lambda}
\def\k{\kappa}
\def\O{\Omega}
\def\o{\omega}
\def\O{\Omega}
\def\r{\rho}
\def\s{\sigma}
\def\t{\tau}
\def\th{\theta}
\def\B#1{\mathbb{#1}}
\def\P #1{\partial_{#1}}
\def\ds{\displaystyle}
\def\B#1{\mathbb{#1}}
\def\P #1{\partial_{#1}}
\def\ds{\displaystyle}
\begin{document}

\title{Symmetry Reduction, Contact Geometry and\\ Nonlinear Trajectory Planning}
\author{J. De Don\'{a}\footnote{School of Electrical Engineering \& Computer Science, University of Newcastle, N.S.W., Australia, 2308 },\ \    N. Tehseen\footnote{School of Mathematical and Physical Sciences, University of Newcastle, N.S.W., Australia, 2308},\  \ P.J. Vassiliou\footnote{Program in Mathematics and Statistics, Faculty of ESTeM, University of Canberra,  A.C.T., Australia, 2601}}
\maketitle
\begin{abstract}{\small  We study control systems invariant under a Lie group with application to the problem of nonlinear trajectory planning. A theory of symmetry reduction of exterior differential systems 
\cite{AF} is employed to demonstrate how symmetry reduction and reconstruction is effective in the explicit, exact construction of planned system trajectories. We show that, while a given control system  with symmetry may not be static feedback linearizable or even flat, it may nevertheless possess a flat or even linearizable symmetry reduction and from this, trajectory planning in the original system may often be carried out or greatly simplified. We employ the contact geometry of Brunovsky normal forms \cite{Vassiliou2006a}, \cite{Vassiliou2006b} to develop tools for detecting and analysing these phenomena. The effectiveness of this approach is illustrated by its application to a problem in the guidance of marine vessels. A method is given for the exact and explicit planning of surface trajectories of models for the control of under-actuated ships. It is shown that a 3-degrees-of-freedom control system for an under-actuated ship has a symmetry reduction which permits us to give an elegant explicit, exact solution to this problem.}\end{abstract}
\vskip 30 pt

\baselineskip=13 pt

\section{Introduction}

In this paper we study the problem of {\it trajectory planning} in those nonlinear control systems $\bsy{\o}$ invariant under a Lie transformation group, $G$. In this we emphasise the explicit, exact construction of planned system trajectories which can be obtained by exploiting the geometry of the quotient system $\bsy{\o}/G$ together with the symmetries of the parent control system $\bsy{\o}$. The rationale here is that the geometry of $\bsy{\o}/G$ is normally quite different from that of $\bsy{\o}$; in particular $\bsy{\o}/G$ may be linearizable or flat, even when $\bsy{\o}$ is not.

In relation to the class of all smooth abstract control systems
\begin{equation}\label{abstractControlSystem}
\bsy{\dot{x}}=\bsy{f}(t,\bsy{x},\bsy{u})
\end{equation}   
the subset of those admitting Lie symmetries is certainly a very thin one. By contrast however, among control systems of interest in real applications a great many do admit such symmetries. This is because in applications such control systems arise from physical or geometrical considerations which come with inherent symmetries such as, for instance, Galilean invariance in the case of many mechanical systems. In fact, it is sometimes surprising to discover just how rich the Lie symmetry group of a seemingly intractable control system can be. We will see examples of this phenomenon herein. These considerations motivate the study of control systems with Lie symmetry.

As is well known,  Lie symmetry {\it reduction} has played a significant role in constructing explicit solutions of differential equations over a very long period, \cite{Olver}. However, its role in control theory specifically is of more recent date.  Various applications of Lie symmetry theory to control systems  intensified around the early 1980s in works such as \cite{NijmeijerVanDerSchaft}, \cite{GrizzleMarcus}, \cite{GardnerShadwick90}, and continued in the more recent \cite{BlochKMarsden},  \cite{Respondek02}, \cite{Ohsawa13} and references therein. Of these,  {\it symmetry reduction} of control systems is specifically discussed in  the foundational papers \cite{GrizzleMarcus},\ \cite{BlochKMarsden} and in  the more recent works \cite{Ohsawa13} where a goal is the role of symmetry reduction in optimal control problems. A textbook account of symmetry reduction in nonlinear control is given in \cite{Elkin}. 

The aim of this paper is to explore the role of symmetry reduction and reconstruction, as formulated by Anderson and Fels \cite{AF}, in addressing the problem of trajectory planning in nonlinear control systems. By trajectory planning we mean the solution to the following problem. Fix a smooth control system  (\ref{abstractControlSystem}) and a connected submanifold $S\subseteq X$ where $X$ is the manifold of all states. Prescribe any smooth, immersed  curve $t\mapsto \g(t)\subset S$  parametrized by time $t$. Determine the time-parametrized curve in the control space $t\mapsto \bsy{u}(t)$ that realises $\g$. 
That is, a time-parametrized curve $t\mapsto \bsy{u}(t)$ in control space is the solution of the trajectory planning problem if $\bsy{x}(t)=\g(t)$ is the unique solution of 
$\bsy{\dot{x}}=\bsy{f}(t,\bsy{x},\bsy{u}(t))$. Thus the trajectory planning problem has three pieces of initial data: the smooth control system (\ref{abstractControlSystem}), a connected submanifold $S\subseteq X$ and a smooth, time-parametrized curve in $S$.    The choice of submanifold $S$ will normally be determined by the actual control problem that requires solution. There can be a vast difference in the explicit solvability of the problem depending on the choice of $S$. In this paper we are focussed not only in establishing the existence of a suitable solution but on creating tools for the explicit exact construction of the solution. 

A class of control systems for which the problem of trajectory planning can often have a particularly elegant, straightforward solution is that of static feedback linearizable systems. These are systems 
(\ref{abstractControlSystem}) for which there exists a static feedback transformation\footnote{Strickly speaking, transformation (\ref{staticFeedbackTransformation}) of 
non-autonomous system (\ref{abstractControlSystem}) is what we later refer to as an {\it extended static feedback transformation}, as in the literature such transformations are usually restricted to time-invariant systems.}
\begin{equation}\label{staticFeedbackTransformation}
(t,\ \bsy{x},\ \bsy{u})\mapsto \Big(t,\ \phi(t,\bsy{x}),  \ \psi(t,\bsy{x},\bsy{u})\Big)
\end{equation}  
which identifies (\ref{abstractControlSystem}) with some Brunovsky normal form ({\it see} section \ref{contactFlat}). The trajectories of any given Brunovsky normal form are straightforward to explicitly and completely describe and transformation 
(\ref{staticFeedbackTransformation}) can then often be used to plan trajectories for (\ref{abstractControlSystem}). There is a wider class of control systems which contain the static feeback linearizable ones as a special subset and having the same useful properties in relation to trajectory planning, namely the so called {\it flat systems}, \cite{LevineBook}.  If a control system (\ref{abstractControlSystem}) is not static feedback linearizable but there is a prolongation of it (obtained by successive differentiation of one or more of the controls) which is static feedback linearizable then the system is likewise flat. The problem of bounding the  number of prolongations in a control system, in order that it be rendered static feedback linearizable, is largely solved in the work of Sluis and Tilbury \cite{SluisTilbury} and Chetverikov \cite{Chetverikov07}. These authors provide a bound on the ``number of integrators`` that must be added to (\ref{abstractControlSystem}) so that the prolonged system is static feedback linearizable. For a large class of control systems these results provide an algorithmic test for linearization via prolongation and therefore the class of control systems for which trajectory planning is often within reach. Therefore, a question of interest in this paper is to explore the problem of trajectory planning for control systems which are {\it not flat}  (therefore not linearizable after sufficient prolongation) or else when such linearization of the system does not prove to be helpful for planning trajectories in prescribed submanifolds $S\subset X$ of state space $X$. 

\subsection{Summary of the main contributions of the paper} This paper establishes a number of new results and concepts which have
both theoretical and practical importance. Firstly, the reduction theory of Anderson and Fels \cite{AF} has not previously been applied to control systems. It has the advantage over other theories in that the reconstruction of the dynamics of the original control system from that of its symmetry reduction (quotient) has been given a clear and comprehensive geometric formulation.  Moreover, the differential system that mediates the reconstruction of the trajectories from those of the quotient system is shown to be  a {\it system of Lie type}, \cite{Bryant95}, \cite{Doubrov}. Lie type systems have special properties such as superposition formulas and a separate reduction procedure. An important property is that if the symmetry group of the control system is solvable then the reconstruction of the system dynamics from those of its quotient system can be reduced to quadrature. Secondly, assuming one knows the Lie algebra of infinitesimal symmetries that generate the action, we show how the contact geometry of Brunovsky normal forms can be applied and developed to give a computationally effective algorithm for quickly detecting when a given invariant control system has a static feedback linearizable quotient as well as deducing its basic properties. Importantly, this can be achieved {\it without constructing} the quotient which, in general, is not an algorithmic process. That is, constructing the quotient explicitly requires integration even if the infinitesimal symmetries are known.\footnote{It is shown in \cite{AF} that many properties of the quotient can be inferred from the algebraically constructible semi-basic forms. However,  to construct the quotient one is required to know a cross-section which, in turn,  requires us to know the action. In general determining the action in a given problem usually requires integration.  } Additionally, we establish the notion of {\it control admissible symmetries} which identifies the class of Lie symmetries such that the quotient of control system (\ref{abstractControlSystem}) is itself a control system thereby generalizing the notion of state space symmetries.  The approach developed 
in this paper is very explicit, capable of creating trajectory planning methodologies for general invariant control systems -- both flat and non-flat; variational and non-variational. 
Finally, the developed theory is illustrated on an important class of control systems, namely marine guidance systems and we derive explicit, novel planning methodologies for surface trajectories. 

\subsection{Outline} Section 2 mainly gives a brief summary of the theory of exterior differential systems with symmetry, from \cite{AF}, that we shall require. Section 3 begins by describing the geometric formulation \cite{Vassiliou2006a}, \cite{Vassiliou2006b} of a central object in geometric control theory, namely Brunovsky normal forms which are mathematically identical to the contact systems on the partial prolongations of jet space $J^1(\B R, \B R^q)$. It ends by deriving necessary and sufficient conditions for when the class of all equivalences between a control system and its linearization contains a static feedback transformation. Section 4 establishes necessary and sufficient conditions for the existence of {\it linearizable quotients} of a control system from knowledge only of the Lie algebra of its infinitesimal symmetries. It is pointed out that this begins to address a long-standing problem: given a control system, characterise its ``maximal linearizable subsystems`` and use these to obtain control theoretic information about the parent system. The notion of {\it control admissible symmetries} is introduced in this section and the main consequence is established. The theoretical developments of sections 3 and 4 are illustrated by providing a detailed example of the symmetry reduction of a non-flat control system in 2 controls arising from a result of \cite{MartinRouchon94}.

Section 5 is devoted to an application of the foregoing results to the problem of trajectory planning in control systems for under-actuated marine vessels. We study one such control system in complete detail and show that it is flat (contrary to a claim in the literature) but that flatness, of itself,  does not solve the most immediate problem of planning surface trajectories. It is shown however that symmetry reduction and reconstruction does permit the required trajectory planning.


\section{Exterior Differential Systems with Symmetry}
In this section we briefly describe the results that we shall require from Anderson and Fels, \cite{AF}. As pointed out by these authors, when carrying out symmetry reduction of Pfaffian systems one ordinarily needs to extend one's study beyond the category of Pfaffian systems themselves to incorporate exterior differential systems (EDS) more broadly. This is because it is possible for the symmetry reduction of a Pfaffian system to be non-Pfaffian, as pointed out in \cite{AF}. To keep the exposition within bounds as well as incorporating interesting applications, we will only consider the Pfaffian case in this paper.

An EDS on smooth manifold $M$ or {\it exterior differential system} is a graded ideal in the ring of all differential forms on $M$, denoted $\O^*(M)$. Let $\O^k(M)$ denote the set of all differential $k$-forms on $M$. 
The EDS $\mathcal{I}$ consists of a direct sum of subsets $\mathcal{I}^k\subset\O^k(M)$
$$
\mathcal{I}=\mathcal{I}^1\oplus\mathcal{I}^2\oplus\cdots\oplus\mathcal{I}^n
$$
where $n=\dim\,M$. Not all the subsets $\mathcal{I}^j$ need be non-empty. It is the structure of $\mathcal{I}$ as direct sum that prompts the qualification {\it graded}. We do not have elements of $\mathcal{I}$ that have, for instance, the form  $\psi^{\ell_1}+\psi^{\ell_2}$ where $\psi^{\ell_1}$ is a $\ell_1$-form and $\psi^{\ell_2}$ is a $\ell_2$-form, with $\ell_1\neq \ell_2$. Because we shall only consider Pfaffian systems in this paper, our ideals consist of degree 1 and degree two components $\CI=\CI^1\oplus\CI^2$ in which the elements of $\CI^2$ consist of the exterior derivatives of the forms in $\CI^1$. We will therefore not usually mention $\CI^2$ and refer to the Pfaffian system by its degree 1 component which will often by denoted by $\bsy{\o}$.

\begin{defn}
Let $\mu : G\times M\to M$ be an action of a Lie group $G$ on a smooth manifold $M$. Let $\bsy{\o}$ be a Pfaffian system on $M$. 
\begin{enumerate}
\item Pfaffian system $\bsy{\o}$ is said to be {\it $G$-invariant} if $\mu_g^*\bsy{\o}\subseteq\bsy{\o}$ for all $g\in G$, where
$\mu_g(x)=\mu(g,x)$, for $x\in M,\ g\in G$.  One also says that $G$ is the {\it symmetry group} of $\CI$. 
\item If $\CV$ is a smooth vector field distribution on $M$ then we say that it is {\it $G$-invariant} or that $G$ constitutes the {\it symmetries of} $\CV$ if ${\mu_g}_*\CV\subseteq \CV$ for all $g\in G$.
\item The $G$-action is said to be {\it free} if whenever $x\in M$ satisfies $\mu_g(x)=x$ then $g=e:=\text{id}_G$
\end{enumerate}
\end{defn}

\begin{defn}
A solution or {\it integral submanifold} of $\bsy{\o}$ on $M$ is a mapping $s: \mathbb{R}^p\to M$ such that $s^*\th=0$ for all $\th\in\bsy{\o}$. The domain of $s$ being $p$-dimensional implies that the dimension of the image of $s$ is (at most) $p$. 
\end{defn}
It follows from these definitions that if $s$ is an integral manifold of $\bsy{\o}$ then so is $\mu_g\circ s$. All actions of Lie group $G$ on smooth manifold $M$ will be assumed to be {\it regular}, (\cite{Olver}, p23; pp213--218), such that the quotient of $M$ by the action of $G$ is a smooth manifold denoted $M/G$ together with a smooth surjection $\mathbf{q}:M\to M/G$ which assigns each point of $M$ to its $G$-orbit. 
We will always assume that $M/G$ has the Hausdorff separation property. From now on $G$ will always denote a Lie group acting smoothly and regularly on smooth manifold $M$. 
\vskip 5 pt


To motivate the next definition, suppose that $\mathbf{q}:M\to M/G$ is the quotient map for the action of Lie group $G$ on $M$. It induces the pullback $\mathbf{q}^*: \O^*(M/G)\to\O^*(M)$ from forms on $M/G$ to those on $M$. Hence if $\bsy{\o}$ is a Pfaffian system on $M$, we can ask about the existence of a Pfaffian system $\bsy{\bar{\o}}$ on $M/G$ such that $\mathbf{q}^*\bsy{\bar{\o}}\subseteq\bsy{\o}$. We shall see that such a Pfaffian system on the quotient of $M$ by $G$ holds significance for applications to control theory.

\begin{defn}\label{QuotientDef}
Let $\mathbf{q}:M\to M/G$ be the quotient map for the smooth, regular Lie group action on $M$ by $G$. Let $\bsy{\o}$ be a Pfaffian system on $M$ that is invariant under the action of $G$. That is, $G$ is a symmetry group for $\bsy{\o}$. The {\it quotient of $\bsy{\o}$ by the given $G$-action} is the maximal Pfaffian system $\bsy{\bar{\o}}$ on $M/G$ such that for all $\bar{\th}\in\bsy{\bar{\o}}$,\ \ 
$\mathbf{q}^*\bar{\th}\in\bsy{\o}$.
\end{defn}

It is instructive to explore this definition further. The following definition is central. 

\begin{defn}[\cite{AF}]\label{DefCrossSection}
Let $M$ be a smooth manifold with a smooth, regular action of Lie group $G$. Let $\mathbf{q}:M\to M/G$ be the quotient map.
\begin{enumerate}
\item A map $\s:M/G\to M$ is said to be a {\it cross-section} (to the action of $G$) if
$\mathbf{q}\circ\s:M/G\to M/G$ is the identity map on $M/G$.
\item Let $\CV\subset TM$ be a distribution and let $\bsy{\G}$ be the Lie algebra of infinitesimal generators of the action of $G$. We say that $G$ is {\it transverse} to $\CV$ if $\bsy{\G}\cap\CV=\{0\}$. Also,
if $\CV=\ker\bsy{\o}$, we say that $G$ is {\it transverse} to $\bsy{\o}$.
\item Let $\CI$ be an exterior differential system on $M$. The {\it semi-basic k-forms} $\CI^k_\mathrm{sb}$ satisfy $\th(\bsy{\G})=0$, for all $\th\in\CI^k$.
\end{enumerate}
\end{defn}
 
As proven in \cite{AF}, many properties of the quotient $\bsy{\bar{\o}}$ can be deduced from the semi-basic forms. In particular, if $\s$ is a cross-section for the action of $G$ then the quotient 
$\bsy{\bar{\o}}:=\bsy{\o}/G$ is equal to $\s^*\bsy{\o}_{\text{sb}}$, where $\bsy{\o}_\mathrm{sb}$, denotes the semi-basic 1-forms.  In this paper we are not concerned with exterior differential systems in general but only with {\it Pfaffian systems}.  
As pointed out previously, the quotient of Pfaffian system need not be a Pfaffian system. As we wish to stay exclusively in the category of Pfaffian systems we note the following characterisation, proven in 
\cite{AF} (Lemma 7.2) that if $\bsy{\o}$ is a Pfaffian system then its $G$-quotient, $\bsy{\o}/G$, is a Pfaffian system if and only if its EDS of semi-basic forms is also a Pfaffian system. 

The following provides further motivation for Definition \ref{QuotientDef}.

\begin{lem}\label{semiBasicDistribution}
Let $\o$ be a Pfaffian system on $M$ invariant under the smooth, regular action of a Lie group $G$ and $\mathbf{q}:M\to M/G$ the quotient map. Let distribution $\CV=\ker\bsy{\o}$. Then $\mathbf{q}_*\CV=\ker\bsy{\bar{\o}}$. 
\end{lem}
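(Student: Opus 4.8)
\emph{Proof proposal.} The plan is to establish the two inclusions $\mathbf q_*\CV\subseteq\ker\bsy{\bar{\o}}$ and $\ker\bsy{\bar{\o}}\subseteq\mathbf q_*\CV$ separately, using the identity $\bsy{\bar{\o}}=\s^*\bsy{\o}_{\mathrm{sb}}$ recalled above for a (local) cross-section $\s$. First I would check that $\mathbf q_*\CV$ is a well-defined field of subspaces on $M/G$: since $\bsy{\o}$ is $G$-invariant so is $\CV=\ker\bsy{\o}$, and because $\mathbf q\circ\mu_g=\mathbf q$ the image $\mathbf q_{*x}(\CV_x)$ depends only on $\mathbf q(x)$; the Lemma asserts it equals $\ker\bsy{\bar{\o}}$ fibrewise.

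The inclusion $\mathbf q_*\CV\subseteq\ker\bsy{\bar{\o}}$ is immediate from Definition \ref{QuotientDef}: if $v\in\CV_x$ and $\bar\th\in\bsy{\bar{\o}}$ then $\mathbf q^*\bar\th\in\bsy{\o}$, so $\langle\bar\th,\mathbf q_{*x}v\rangle=\langle\mathbf q^*\bar\th,v\rangle=0$ as $v\in\ker\bsy{\o}_x$.

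For the reverse inclusion I would argue pointwise. Fix $\bar x$, a cross-section $\s$ with $x=\s(\bar x)$, and $\bar v\in\ker\bsy{\bar{\o}}_{\bar x}$. Writing $\bsy{\G}_x=\ker\mathbf q_{*x}$ for the tangent space to the $G$-orbit through $x$, we have $T_xM=\bsy{\G}_x\oplus\s_{*\bar x}T_{\bar x}(M/G)$; set $u=\s_{*\bar x}\bar v$, so that $\mathbf q_{*x}u=\bar v$. Since $\bsy{\bar{\o}}=\s^*\bsy{\o}_{\mathrm{sb}}$, the condition $\bar v\in\ker\bsy{\bar{\o}}_{\bar x}$ becomes $\langle\th_x,u\rangle=0$ for every $\th\in\bsy{\o}_{\mathrm{sb}}$, i.e.\ the functional $L\colon\eta\mapsto\langle\eta,u\rangle$ on $\bsy{\o}_x$ kills the fibre $(\bsy{\o}_{\mathrm{sb}})_x$. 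Identifying $(\bsy{\o}_{\mathrm{sb}})_x$ with the annihilator of $\bsy{\G}_x$ inside $\bsy{\o}_x$, a standard duality argument (the image of the pairing map $\bsy{\G}_x\to(\bsy{\o}_x)^{*}$ is precisely the annihilator of that subspace) yields $\xi\in\bsy{\G}_x$ with $\langle\eta,\xi\rangle=\langle\eta,u\rangle$ for all $\eta\in\bsy{\o}_x$. Then $u-\xi\in\ker\bsy{\o}_x=\CV_x$ while $\mathbf q_{*x}(u-\xi)=\bar v$, so $\bar v\in(\mathbf q_*\CV)_{\bar x}$, completing the proof.

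The hard part will be the identification $(\bsy{\o}_{\mathrm{sb}})_x=\{\eta\in\bsy{\o}_x:\eta(\bsy{\G}_x)=0\}$ invoked in the last step: that the fibres of the semi-basic sub-Pfaffian system coincide with the pointwise annihilator of the orbit directions is exactly where the constant-rank/regularity hypotheses of \cite{AF} must be used -- the same hypotheses that guarantee $\bsy{\o}_{\mathrm{sb}}$, and hence $\bsy{\bar{\o}}$, is genuinely Pfaffian so that $\ker\bsy{\bar{\o}}$ is an honest distribution -- and away from the regular locus one would first restrict to the open dense set on which all relevant ranks are locally constant.
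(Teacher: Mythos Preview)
Your proof is correct and follows essentially the same route as the paper's. Both arguments handle the forward inclusion directly from Definition~\ref{QuotientDef}, and for the reverse inclusion both push $\bar v$ up to $M$ via a cross-section $\s$, observe that $\s_*\bar v$ annihilates $\bsy{\o}_{\mathrm{sb}}$, and then use $\ker\bsy{\o}_{\mathrm{sb}}=\CV+\bsy{\G}$ to conclude. The only real difference is that the paper defers this last identity to the proof of Theorem~\ref{quotientCheck} (where transversality makes the sum direct), whereas you supply the linear-algebra duality argument on the spot; your version is thus slightly more self-contained and does not implicitly invoke transversality, which is not among the Lemma's hypotheses.
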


\noindent{\it Proof.}  Suppose $\bar{X}\in\mathbf{q}_*\CV$. Then there exist $X\in\CV$ with $\bar{X}=\mathbf{q}_*X$ and we have, for all $\bar{\th}\in\bsy{\bar{\o}}$, 
$\bar{\th}(\bar{X})=\bar{\th}(\mathbf{q}_*(X))=\left(\mathbf{q}^*\bar{\th}\right)(X)=\th(X)=0$ for some $\th\in\bsy{\o}$ by Definition \ref{QuotientDef}, so  $\mathbf{q}_*\CV\subseteq \ker\bsy{\bar{\o}}$.

Suppose $\bar{Y}\in\ker\bsy{\bar{\o}}$. Then $\bar{\th}(\bar{Y})=0$ for all $\bar{\th}\in\bsy{\bar{\o}}$. With cross-section $\s$ we have $\bar{\th}=\s^*\th_{\text{sb}}$, some $\th_{\text{sb}}\in\bsy{\o}_{\text{sb}}$ and hence  $0=\left(\s^*\th_{\text{sb}}\right)(\bar{Y})=\th_{\text{sb}}(\s_*\bar{Y})$. Now it is possible to show (see proof of Theorem \ref{quotientCheck}) that $\ker\bsy{\o}_{\text{sb}}=\CV\oplus\bsy{\G}$ and hence
$\s_*\bar{Y}\in\CV\oplus\bsy{\G}$ whence $\bar{Y}=\mathbf{q}_*\s_*\bar{Y}\in\mathbf{q}_*(\CV\oplus\bsy{\G})=\mathbf{q}_*\CV$. Hence $\ker\bsy{\bar{\o}}\subseteq\mathbf{q}_*\CV$.
\hfill\qed

\vskip 5 pt
While symmetry reduction can produce a control system on a lower dimensional manifold, this doesn't necessarily imply that the quotient system is more tractable to trajectory planning than the parent system; in fact the opposite is often the case. Therefore for effective use of symmetry reduction in control we focus on the situation in which $\bsy{\o}$ may not be flat or static feedback linearizable and then seek a 
quotient $\bsy{\bar{\o}}$ that is at least flat, if not static feedback linearizable.  Subsequently, we use this quotient to solve the trajectory planning problem for $\bsy{\o}$. Similar aims are mentioned in \cite{Murray97} though the interesting approach proposed there is different from the one taken in this paper. Here more attention is paid to the problem of constructing the trajectories of $\bsy{\o}$ from those of $\bsy{\bar{\o}}$. Particular attention is paid to algorithmically characterising desirable properties of $\bsy{\bar{\o}}$, such as linearizability, in terms of the infinitesimal symmetries of $\bsy{\o}$.

\subsection{Reduction and reconstruction}

It is shown in \cite{AF} that provided the action of the symmetry group of $\bsy{\o}$ satisfies certain constraints and certain objects can be explicitly constructed then trajectories of 
$\bsy{\o}$ can be constructed from those of $\bsy{\bar{\o}}$ by solving an ODE of {\it Lie type}. Such ODE systems form a particularly nice class since they themselves have a reduction theory which aids their solution \cite{Bryant95} , \cite{Doubrov}. 

\begin{thm}[\cite{AF}]\label{mainThm}
Let $\bsy{\o}$ be a Pfaffian system on manifold $M$  that is invariant under the smooth, regular action of a Lie group $G$ and such that the $G$-action is free and transverse to $\bsy{\o}$. Let $\bsy{\bar{\o}}$ be the quotient of 
$\bsy{\o}$ on  $M/G$ by the $G$-action and let $\bar{s}:\CU\to M/G$ be an integral manifold of $\bsy{\bar{\o}}$.  Then
\begin{enumerate}
\item There exists a cross-section to the action $\s: M/G\to M$ to the $G$-action.
\item There exists a map $g: \CU\to G$  such that $s(x)=\mu(g(x),(\s\circ\bar{s})(x))$ is an integral submanifold of $\bsy{\o}$ for all $x\in \CU$, where $\mu:G\times M\to M$ is the $G$-action.
\item The function $g$ is constructed by solving a Frobenius integrable system of Lie-type.
\end{enumerate}  

\end{thm}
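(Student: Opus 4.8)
The plan is to prove the three assertions in order, exploiting the hypothesis that the $G$-action is free, regular and transverse to $\bsy{\o}$. For part (1), the existence of a cross-section $\s:M/G\to M$ follows from the fact that a free, regular Lie group action makes $\mathbf{q}:M\to M/G$ into a principal $G$-bundle (locally trivial over a cover of $M/G$); since we are working in the smooth category and all constructions here are local along the given integral manifold $\bar s$, it suffices to produce $\s$ over a neighbourhood of the image of $\bar s$, where local triviality of the bundle gives a smooth section immediately. One should remark that this is exactly the point at which ``integration'' enters: knowing the infinitesimal generators $\bsy\G$ is not enough to write down $\s$ explicitly; one needs the flow of $\bsy\G$, i.e. the action $\mu$ itself.

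For part (2), I would start from the integral manifold $\bar s:\CU\to M/G$ of $\bsy{\bar\o}$ and form the ``first guess'' $\s\circ\bar s:\CU\to M$. This need not be an integral manifold of $\bsy\o$: by Definition \ref{QuotientDef} and the cross-section formula $\bsy{\bar\o}=\s^*\bsy\o_{\mathrm{sb}}$, pulling back $\bsy\o$ along $\s\circ\bar s$ kills the semi-basic part but may leave a residual ``connection'' piece living along the group directions $\bsy\G$. The idea is to correct this by acting with a $\CU$-dependent group element: set $s(x)=\mu(g(x),(\s\circ\bar s)(x))$ and compute $s^*\th$ for $\th\in\bsy\o$. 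Using $G$-invariance of $\bsy\o$ (so $\mu_g^*\bsy\o\subseteq\bsy\o$) and the decomposition $\ker\bsy\o_{\mathrm{sb}}=\CV\oplus\bsy\G$ quoted in the proof of Lemma \ref{semiBasicDistribution}, the pullback $s^*\th$ splits into a part that vanishes because $\bar s$ is an integral manifold of $\bsy{\bar\o}$ and a part that is a left-invariant (Maurer--Cartan) $1$-form on $G$ evaluated on $dg$, plus a term linear in the connection form of the bundle pulled back along $\bar s$. Setting $s^*\th=0$ for all $\th\in\bsy\o$ then becomes an equation of the form $g^{-1}dg = -(\text{pullback of a }\bsy\G\text{-valued }1\text{-form on }\CU)$, i.e. a differential equation for $g:\CU\to G$ in Lie (Maurer--Cartan) form; transversality guarantees that the right-hand side is well-defined $\bsy\G$-valued, and freeness guarantees the bundle structure needed to make sense of the connection.

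For part (3), it remains to observe two things: first, the equation $g^{-1}dg=\Theta$ for a prescribed $\mathfrak g$-valued $1$-form $\Theta$ on $\CU$ is by definition a system of Lie type (its solution curves are determined by the right-invariant vector fields on $G$ pulled back to $\CU$, and it admits the usual superposition property); second, it is Frobenius integrable precisely when $d\Theta=-\Theta\w\Theta$ (the Maurer--Cartan structure equation), and this compatibility condition holds automatically here because $\Theta$ was built as the pullback of the connection $1$-form along an integral manifold of the quotient system — the structure equations of $\bsy\o$, together with the hypothesis that $\bar s$ integrates $\bsy{\bar\o}$, force the obstruction to vanish. So the conclusion is that $g$ exists locally on $\CU$, is unique once an initial value $g(x_0)$ is chosen, and is obtained by integrating a Frobenius-integrable Lie-type system, which is exactly assertion (3).

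The main obstacle, and the step I would spend the most care on, is the bookkeeping in part (2): one must choose a consistent trivialization near $\mathrm{im}(\bar s)$, identify the residual part of $(\s\circ\bar s)^*\bsy\o$ with the pullback of a genuine $\bsy\G$-valued connection form (this uses transversality and the $\CV\oplus\bsy\G$ decomposition in an essential way), and verify that acting by $g(x)$ transforms this residual form by the expected affine rule $\Theta\mapsto \mathrm{Ad}_{g^{-1}}\Theta + g^{-1}dg$, so that $s^*\bsy\o=0$ is equivalent to a closed-form Lie-type equation for $g$. Once that identification is pinned down, integrability (part 3) and the Lie-type classification are formal consequences of the Maurer--Cartan equation. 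I would also flag that everything is local in $x\in\CU$, which is all the theorem claims, so no global obstruction on $G$ or $M/G$ needs to be addressed.
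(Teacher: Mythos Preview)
The paper does not supply its own proof of this theorem: it is quoted from Anderson--Fels \cite{AF}, with the remark immediately following that ``Theorem \ref{mainThm} is a specialisation of the corresponding theorem in \cite{AF}, adapted to our situation.'' So there is no in-paper argument to compare against.

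That said, your proposal is a faithful outline of the standard argument (and, in outline, of the Anderson--Fels proof itself). The three ingredients you isolate are exactly the right ones: (i) free and regular implies $\mathbf q:M\to M/G$ is a principal $G$-bundle, hence admits local smooth sections; (ii) lifting $\bar s$ via $\s$ and correcting by a moving group element $g(x)$ reduces $s^*\bsy\o=0$ to a Maurer--Cartan-type equation $g^{-1}dg=\Theta$ with $\Theta$ a $\mathfrak g$-valued $1$-form on $\CU$ built from the residual, non-semi-basic part of $(\s\circ\bar s)^*\bsy\o$; and (iii) the compatibility $d\Theta+\Theta\wedge\Theta=0$ follows from the structure equations of $\bsy\o$ restricted along an integral manifold of $\bsy{\bar\o}$. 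The one place your sketch is thin is the identification in (ii): to turn ``the residual part is a $\bsy\G$-valued connection form'' into a precise statement you should fix a local trivialization $M\supset \mathbf q^{-1}(U)\cong U\times G$, write $\bsy\o$ in these coordinates, and check explicitly that transversality ($\bsy\G\cap\CV=0$) is what lets you solve uniquely for the $\mathfrak g$-component. Once that is pinned down, the affine transformation law under $g$ and the Frobenius integrability are indeed formal, as you say.
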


Provided we can explicitly satisfy the hypotheses of Theorem \ref{mainThm} and solve the resulting system of Lie type then every solution of the quotient gives rise to a solution of the original Pfaffian 
system $\bsy{\o}$ on $M$; all the trajectories of $\bsy{\o}$ can be constructed in this way. Note that Theorem \ref{mainThm} is a specialisation of the corresponding theorem in \cite{AF}, adapted to our situation.   Another important specialisation we make is that as we are dealing with control systems we will always require our group actions to {\it preserve the independent variable}, usually {\it time} $t$; 
see section \ref{LinearQuotients}.


\section{Contact Geometry and Static Feedback Linearization}\label{contactFlat}

Since Brunovsky's seminal paper \cite{Brunovsky} there has been a great deal of interest in the exact linearization of nonlinear control systems. Early works were those of Krener \cite{Krener73}; Hunt, Su, Meyer \cite{HuntSuMeyer}; Brocket \cite{Brocket78} and Jakubzcyk and Respondek \cite{Respondek80} which provided successful approaches to the problem of finding a transformation of a controllable nonlinear system to its linear equivalent, if one existed. 

Precisely, the problem of linearization is to determine when a nonlinear control system  (\ref{abstractControlSystem}) can be transformed by a static feedback transformation 
(\ref{staticFeedbackTransformation}) to some member of the family of {\it Brunovsky normal forms} which can be viewed as the family of Pfaffian systems ${B}(\k_1,\ldots,\k_m)$,
\begin{equation}\label{BNF}
\begin{aligned}
&\omega^1_0=dx^1_0-x^1_1dt,\ \ \omega^1_1=dx^1_1-x^1_2dt,\ \ldots\ \ \ ,\omega^1_{\k_1}=dx^1_{\k_1}-v_1dt,\cr
&\omega^2_0=dx^2_0-x^2_1dt,\ \ \omega^2_1=dx^2_1-x^2_2dt,\ \ldots\ \ \ ,\omega^2_{\k_2}=dx^2_{\k_2}-v_2dt,\cr
&\hskip 60 pt \cdots\cdots\hskip 60 pt\cdots\cdots\cr
&\hskip 60 pt \cdots\cdots\hskip 60 pt\cdots\cdots\cr
&\omega^m_0=dx^m_0-x^m_1dt,\ \ \omega^m_1=dx^m_1-x^m_2dt,\ \ldots,\ \ \ \omega^m_{\k_m}=dx^m_{\k_m}-v_mdt,
\end{aligned}
\end{equation}
each labelled by the sequence of positive integers $\k_1, \k_2, \ldots, \k_m$. Relabelling controls $v_j$ as $x^j_{\k_j+1}$ one notices immediately that the Brunovsky normal forms are identical to the {\it partial prolongations} of the contact system 
$$
\{\o^j_0=dx^j_0-x^j_1\,dt\}_{j=1}^m
$$ 
on the jet space $J^1(\B R, \B R^m)$ in which $\o^j_0$ is prolonged to order $\k_j+1$. 
Dually in standard coordinates we have the {\it contact distribution} on $J^1(\B R, \B R^m)$,
\begin{equation}
\mathcal{C}\langle m\rangle=\left\{\P x+z^1_1\P {z^1}+z^2_1\P {z^2}+\cdots+z^m_1\P {z^n},\ \P {z^1_1},\ \P {z^2_1},\ \ldots,\P {z^m_1}\right\},
\end{equation}
occasionally denoted by $\mathcal{C}^1_m$. Partially prolonging to arbitrary order in any or all ``directions`` $\P {z^j}$ produces a controllable linear control system in Brunovsky normal form.   
We shall adopt the notation $\mathcal{C}\langle\r_1,\r_2,\ldots,\r_k\rangle$ for the Brunovsky normal form in which $\r_\ell$ denotes the ``number of variables of order $\ell$``.  Note that a {\it partial prolongation} of the contact distribution on $J^1(\B R,\B R^q)$, the {\it contact distribution} on a partial prolongation of jet space $J^1(\B R,\B R^q)$ and a {\it Brunovsky norm form} are different ways of refering to the same  object. These terms  are used interchangably in this paper.

\subsection{Geometry of partial prolongations }\label{secGGNF}
While the problem of static feedback linearization is considered by control theorists to be solved, it appears that the field has by and large not emphasised its intimate relation to contact geometry. A number of important practical and conceptual benefits accrue when the underlying contact geometry of linearizable nonlinear control systems is given greater prominence as we point out below. A key classical theorem that animates the direction of a good deal of the work in linearization of control systems is the Goursat normal form and its generalizations. 
In due course, it raised a question in differential geometry as to how much of the local structure of a sub-bundle  $\mathcal{V}\subset TM$ of the tangent bundle to smooth manifold $M$ can be encoded by {\it numerical invariants} associated to its {\it derived flag}; see \cite{Bryant79}, \cite{Yamaguchi}. Such a set of numerical invariants is referred to as the {\it derived type} of $\mathcal{V}$. It is rarely the case that the local structure of $\CV$ is determined by its derived type. However the situations for which the derived type {\it is} a complete local invariant include the basic theorems of differential geometry such as the Frobenius theorem, the Pfaff theorem, the Engel normal form and the Goursat normal form. The aforementioned works \cite{Bryant79}, \cite{Yamaguchi} have added several important new cases to this short list, effectively providing a geometric characterisation of the contact systems on jet spaces $J^k(\B R^n,\B R^m)$.  The simplest example of a differential system for which the derived type {\it does not} determine its local structure is the family of generic 2-plane fields on any 5-manifold, \cite{Cartan10}. In this section we discuss one case in which the derived type is a complete local invariant, the  generalised Goursat normal form \cite{Vassiliou2006a}, \cite{Vassiliou2006b}. This theorem provides a geometric characterisation of the {\it partial prolongations} of the jet space $J^1(\B R, \B R^q)$ and {\it inter alia} a geometric formulation of linearization in nonlinear control systems.  That is to say a geometric formulation of Brunovsky normal forms. It is pointed out that the problem of static feedback linearization of control systems (regardless of their local form) can be viewed as a refinement of this general research program in relation to derived type. This reformulation unifies and extends a number of the standard results in control theory such as the GS algorithm \cite{GardnerShadwick92}, the extended Goursat normal form \cite{BushnellTilburySastry93}, the Hunt-Su-Meyer \cite{HuntSuMeyer} and Respondek-Jakubczyk \cite{Respondek80} linearization theorems and related results. Of immediate relevance for this paper are the benefits it confers on the question of finding and analysing {\it linearizable symmetry reductions} of control systems. 

Let us recall the classical Goursat normal form. Let $\CV\subset TM$ be a smooth, rank 2 sub-bundle over smooth manifold $M$ such that $\CV$ is {\it bracket generating} and $\dim\CV^{(i)}=2+i$ while 
$\CV^{(i)}\neq TM$.  Then there is a generic subset $\widehat{M}\subseteq M$ such that in a neigbourhood of each point of $\widehat{M}$ there are local coordinates $x,z_0,z_1,z_2,\ldots z_k$ such that $\CV$ has local expression
\begin{equation}\label{CtR2R}
\CC^{(k)}_1=\Big\{\P x+\sum_{j=1}^k z_j\P {z_{j-1}},\ \P {z_k}\Big\} 
\end{equation} 
where $k=\dim M-2$. That is, $\CV$ is locally equivalent to $\mathcal{C}^{(k)}_1$ on $\widehat{M}$.

The Goursat normal form  solves the recognition problem of when differential system $\CV$ can be identified with the contact distribution (\ref{CtR2R}) via a local diffeomorphism of $M$ in terms of a property of its derived flag. The generalised Goursat normal form (GGNF) does the same
job in case distribution (\ref{CtR2R}) is replaced by the {\it partial prolongations} of the contact distribution on jet space $J^1(\mathbb{R},\mathbb{R}^q)$, with $q>1$,
\begin{equation}\label{CtR2Rq}
\mathcal{C}^{(1)}_q=\Big\{\P x +z^1_1\P {z^1}+z^2_1\P {z^2}+\ldots+z^q_1\P {z^q},\P {z^1_1},\ldots,\P {z^q_1}\Big\}.
\end{equation}
This is a much more delicate task involving more subtle invariants but the end result is an analogous theorem.  An example of a partial prolongation of (\ref{CtR2Rq}) is given by 
\begin{equation}\label{Ctppr011}
\mathcal{C}\langle 0,1,1\rangle=\Big\{\P x+z^2_1\P {z^2}+z^2_2\P {z^2_1}+z^3_1\P {z^3}+z^3_2\P {z^3_1}+z^3_3\P {z^3_2}, \P {z^2_2}, \P {z^3_3}\Big\}
\end{equation}
in which there is one ``dependent variable of order 2`` and one of order 3 (so $q=2$). The notation  $\mathcal{C}\langle 0,1,1\rangle$ denotes one dependent variable of order 2 (second element) and one dependent variable of order 3 (third element) and zero dependent variables of order 1 (first element). Note that in (\ref{Ctppr011}) the superscript 2 or 3 denotes the order of the variable. 
\vskip 10 pt
We are now ready to describe the aforementioned theorem on partial prolongations. This leads to a procedure based on the {\it refined} derived type of a sub-bundle for recognising sub-bundles as partial prolongations and for constructing equivalences to them that puts no restriction on the local form of the sub-bundle under consideration.  We begin with an introduction to the basic tools required.

Suppose $M$ is a smooth manifold and $\CV\subset TM$ a smooth sub-bundle of its tangent bundle. The structure tensor is the map $\delta :\L^2\CV\to TM/\CV$
defined by
$$
\delta(X,Y)=[X,Y]\mod\CV,\ \text{for all}\ \ X,Y\in\CV.
$$
In more detail, suppose
$X_1,\ldots,X_r$ is a basis for $\CV$ and $\o^1,\ldots,\o^r$ is the dual basis for its dual $\CV^*$. Suppose $Z_1,\ldots,Z_s$ is a basis for $TM/\CV$ such that 
$[X_i,X_j]\equiv c^k_{ij}Z_k\mod \CV$ for some functions $c^k_{ij}$ on $M$. Then $\delta=c^k_{ij}\o^i\wedge\o^j\otimes Z_k$; that is, a section of $\L^2\CV^*\otimes TM/\CV$.  
The structure tensor encodes important information about a sub-bundle, the most obvious of which is the extent to which it fails to be Frobenius integrable.

Let us define the map
$
\zeta:\CV\to\text{Hom}(\CV,TM/\CV)\ \ \text{by}\ \ \zeta(X)(Y)=\delta(X,Y)
$
 For each $x\in M$, let
$
\mathcal{S}_x=\{v\in\CV_x\backslash 0~|~\zeta(v)\ \text{has less than generic rank}\}.
$
Then $\mathcal{S}_x$ is the zero set of homogeneous polynomials and so defines a subvariety of the projectivisation 
$\mathbb{P}\CV_x$ of $\CV_x$. We shall denote by Sing$(\CV)$ the fibre bundle over $M$ with fibre over $x\in M$ equal to 
$\mathcal{S}_x$ and we refer to it as the {\it singular variety} of $\CV$. For $X\in \CV$ the matrix of the homomorphism $\zeta(X)$ will be called the {\it polar matrix} of $[X]\in\mathbb{P}\CV$. There is a map $\text{deg}_\CV:\mathbb{P}\CV\to
\mathbb{N}$ well defined by
$
\text{deg}_\CV([X])=\text{rank}~\zeta(X)\ \ \ \text{for}\ \ [X]\in\mathbb{P}\CV.
$
We shall call $\text{deg}_\CV([X])$ the {\it degree} of $[X]$ (relative to $\CV$). Function $\text{deg}_\CV([X])$ is a diffeomorphism invariant: $\text{deg}_{\phi_*\CV}([\phi_*X])=\text{deg}_\CV([X])$. Hence the singular variety $\text{Sing}{(\CV)}$ is also a diffeomorphism invariant.

The computation of the singular variety for any given sub-bundle  $\CV\subset TM$ is algorithmic. It involves only differentiation and commutative algebra operations. One computes the {\it determinantal variety} of the polar matrix for generic $[X]$; see \cite{Vassiliou2006a}, \cite{Vassiliou2006b}, \cite{Vassiliou2009a} for more details and examples. 

Recall that an important invariant object associated to any distribution is its Cauchy bundle. For $\CV\subseteq TM$ by
$\ch\CV$ we denote the Cauchy characteristics of $\CV$; that is,
$
\ch\CV=\{X\in\CV~|~[X,\CV]\subseteq\CV\}.
$
If $\CV$ is such that all derived bundles $\CV^{(j)}$ and all their Cauchy characteristics
$\ch\CV^{(j)}$ have constant rank on $M$ then we say that $\CV$ is {\it totally regular}. In this case we refer to $\ch\CV^{(j)}$ as the {\it Cauchy bundle} of $\CV^{(j)}$. 

The aforemention singular bundle has a natual counterpart in which the quotient $\CV/\ch\CV$ replaces $\CV$. In this case, if $\text{Sing}{(\CV/\ch\CV)}$ is not empty then each of its points has positive degree.


\vskip 10 pt
\begin{defn}[Resolvent bundle]

  Suppose $\CV\subset TM$ is totally regular of rank $c+q+1$, $q\geq 2, c\geq 0$,
$\dim M=c+2q+1$. Suppose further that $\CV$ satisfies
\vskip 2 pt
\begin{itemize}
\item[\rm a)] $\dim\ch\CV=c$, $\CV^{(1)}=TM$
\item[\rm b)] 
$\text{Sing}(\widehat{\CV})_{|_x}=\mathbb{P}\widehat{\mathcal{B}}_{|_x}\approx\mathbb{R}
\mathbb{P}^{q-1}$, for each $x\in M$ and some rank $q$ sub-bundle $\widehat{\mathcal{B}}\subset\widehat{\CV}$.  
Then we call  $(\CV,\mathbb{P}\widehat{\mathcal{B}})$  a
{\it   Weber structure} of rank $q$ on $M$. 
\end{itemize} 
Given a  Weber structure
$(\CV,\mathbb{P}\widehat{\mathcal{B}})$, let ${R}(\CV)\subset\CV$, denote the largest sub-bundle such that
$$
{\pi}\big(R(\CV) \big)=\widehat{\mathcal{B}}.
$$
We call the rank $q+c$ bundle $R(\CV)$ the {\it resolvent bundle} associated to the  Weber structure $({\CV},\mathbb{P}\widehat{\mathcal{B}})$. The bundle $\widehat{\mathcal{B}}$ determined by the singular variety of 
$\widehat{\CV}$ will be called the {\it singular sub-bundle} of the  Weber structure. A  Weber structure
will be said to be {\it integrable} if its resolvent bundle is integrable.
\end{defn}
An {\it integrable}  Weber structure descends to the quotient of $M$ by the leaves of $\ch\CV$ to be the contact bundle on $J^1(\mathbb{R},\mathbb{R}^q)$. 

It is important to relate a given partial prolongation to its derived type. For this it is convenient to introduce the notions of {\it velocity} and {\it deceleration} of a sub-bundle.

\begin{defn}\label{decel}
Let $\CV\subset TM$ be a totally regular sub-bundle.
The {\it velocity} of $\CV$ is the ordered list of $k$ integers 
$$
\text{\rm vel}(\CV)=\langle\D_1,\D_2,\ldots,\D_k\rangle,\ \ \text{where}\ \ \D_j=m_j-m_{j-1},\ 1\leq j\leq k,
$$
where $m_j=\dim\CV^{(j)}$.

The {\it deceleration} of $\CV$ is the ordered list of $k$ integers 
$$
\text{\rm decel}(\CV)=\langle -\D^2_2,-\D^2_3,\ldots,-\D^2_k, \D_k\rangle,\ \ \text{where}\ \  \D^2_j=\D_j-\D_{j-1}.
$$
\end{defn}

The notions of velocity and deceleration are refinements of the well known {\it growth vector} of a sub-bundle. If we think of the growth vector as a type of displacement or distance then the notions of velocity and deceleration acquire a natural meaning. We will see that the deceleration vector is a complete invariant of a partial prolongation except when $\D_k>1$, in which case one must also add that the resolvent bundle be integrable.
  

To recognise when a given sub-bundle has
or has not the derived type of a partial prolongation we introduce one further canonically associated sub-bundle that plays a crucial role. 
\vskip 5 pt
\begin{defn} If $\CV\subset TM$ is a totally regular sub-bundle of derived length $k$ we let 
$\ch\CV^{(j)}_{j-1}$ denote the intersections
$$
\ch\CV^{(j)}_{j-1}=\CV^{(j-1)}\cap\ch\CV^{(j)},\ 1\leq j\leq k-1.
$$
Let
$$
\chi^j_{j-1}=\dim\ch\CV^{(j)}_{j-1},\ \ 1\leq j\leq k-1.
$$  
We shall call the integers $\{\chi^0,\chi^j,\chi^j_{j-1}\}_{j=1}^{k-1}$ the {\it type numbers} of $\CV\subset TM$ and the list
$$
\mathfrak{d}_r(\CV)=\big[\,\big[m_0,\chi^0\big],\big[m_1,\chi^1_0,\chi^1\big],\big[m_2,\chi^2_1,\chi^2\big],\ldots,\big[m_{k-1},\chi^{k-1}_{k-2},\chi^{k-1}\big],\big[m_k,\chi^k\big]\,\big]
$$ 
as the {\it refined derived type} of $\CV$.
\end{defn}
\vskip 5 pt
It is easy to see that in every partial prolongation sub-bundles $\ch\CV^{(j)}_{j-1}$ are non-trivial and integrable, an invariant property of $\CV$. Furthermore, there are simple relationships between the type numbers in any partial prolongation thereby providing further invariants for the local equivalence problem. 
\vskip 5 pt
\begin{prop} [\cite{Vassiliou2006a}]\label{derivedtype} Suppose $\CV$ is a partial prolongation of the contact distribution of $\mathcal{C}\langle q\rangle$ on $J^1(\B R, \B R^q)$. Then the type numbers 
$m_j,\ \chi^j,\ \chi^j_{j-1}$ comprising the refined derived type $\mathfrak{d}_r(\CV)$ satisfy
$$
\begin{aligned}
&\chi^j=2m_j-m_{j+1}-1,\ 0\leq j\leq k-1,\cr
&\chi^i_{i-1}=m_{i-1}-1,\ 1\leq i\leq k-1,
\end{aligned}
$$
where $k$ is the derived length of $\CV$.
\end{prop}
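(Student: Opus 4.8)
The plan is to work throughout in explicit canonical coordinates, realizing the partial prolongation $\mathcal{C}\langle\rho_1,\dots,\rho_k\rangle$ as a superposition of $q=\rho_1+\cdots+\rho_k$ Goursat-type chains over a common independent variable. For each dependent variable $c=1,\dots,q$ let $\ell_c\in\{1,\dots,k\}$ be its order, so that exactly $\rho_\ell$ of the $\ell_c$ equal $\ell$, with fibre coordinates $z^c,z^c_1,\dots,z^c_{\ell_c}$; then $\CV$ is spanned by the total-derivative field $D=\P x+\sum_{c=1}^q\sum_{i=1}^{\ell_c}z^c_i\,\P{z^c_{i-1}}$ together with the $q$ fields $\P{z^c_{\ell_c}}$. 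The strategy is to compute $\CV^{(j)}$, $\ch\CV^{(j)}$ and $\CV^{(j-1)}\cap\ch\CV^{(j)}$ explicitly in these coordinates and then verify the two stated identities by elementary counting.

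First I would determine the derived flag by induction. Since $[D,\P{z^c_i}]=-\P{z^c_{i-1}}$ for $i\ge1$, $[D,\P{z^c}]=0$, and all the $\P{}$'s commute, a straightforward induction shows $\CV^{(j)}$ is spanned by $D$ together with $\{\P{z^c_i}:\max(0,\ell_c-j)\le i\le\ell_c\}$. Consequently $m_j=1+q+\sum_{c=1}^q\min(j,\ell_c)$, the derived length is $k=\max_c\ell_c$, $\CV^{(j)}\ne TM$ for $j<k$, and $\D_j=m_j-m_{j-1}=\#\{c:\ell_c\ge j\}$.

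Next I would pin down the Cauchy bundles. Because for any chain with $\ell_c>j$ one has $[D,\P{z^c_{\ell_c-j}}]=-\P{z^c_{\ell_c-j-1}}\notin\CV^{(j)}$, the field $D$ lies in $\ch\CV^{(j)}$ only for $j\ge k$; in particular $D\notin\ch\CV^{(j)}$ when $j<k$. For a coordinate field $\P{z^c_i}\in\CV^{(j)}$ its only possibly nonzero bracket with $\CV^{(j)}$ is $[\P{z^c_i},D]=\P{z^c_{i-1}}$ (for $i\ge1$), so $\P{z^c_i}$ is Cauchy iff $i\ge\max(1,\ell_c-j+1)$, while $\P{z^c}$, which belongs to $\CV^{(j)}$ iff $\ell_c\le j$, is always Cauchy. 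Counting the survivors gives, for $0\le j<k$,
\[
\chi^j=\dim\ch\CV^{(j)}=\sum_{\ell_c\le j}(\ell_c+1)+j\cdot\#\{c:\ell_c>j\};
\]
a one-line rearrangement using $\#\{\ell_c\le j\}+\#\{\ell_c>j\}=q$ rewrites the right-hand side as $q+\sum_{\ell_c\le j}\ell_c+(j-1)\#\{\ell_c>j\}=2m_j-m_{j+1}-1$, with the case $j=0$ reducing to $\ch\CV=0=2m_0-m_1-1$. This proves the first identity.

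Finally, for $1\le i\le k-1$ I would compute $\chi^i_{i-1}=\dim(\CV^{(i-1)}\cap\ch\CV^{(i)})$. Since $D\in\CV^{(i-1)}$ but $D\notin\ch\CV^{(i)}$ (as $i<k$) and $\ch\CV^{(i)}$ is a subbundle, the intersection is spanned by coordinate fields only, and intersecting the explicit chain-by-chain descriptions of $\CV^{(i-1)}$ and $\ch\CV^{(i)}$ yields
\[
\chi^i_{i-1}=\sum_{\ell_c\le i-1}(\ell_c+1)+i\cdot\#\{c:\ell_c\ge i\},
\]
which the same bookkeeping identity turns into $q+\sum_{\ell_c\le i-1}\ell_c+(i-1)\#\{\ell_c\ge i\}=m_{i-1}-1$, the second identity. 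The main obstacle is the case analysis behind the two displayed formulas — correctly handling chains of order exactly $j$ or $i$, chains already saturated in a given derived bundle (where $\ell_c<j$), and the systematic exclusion of $D$ — after which the remaining algebra is elementary.
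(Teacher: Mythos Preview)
Your argument is correct. The paper does not actually prove this proposition; it is quoted from \cite{Vassiliou2006a} and simply stated as a known fact, so there is no in-paper proof to compare against. Your direct coordinate computation---realising $\CV$ as a bouquet of Goursat chains over a single independent variable, computing $\CV^{(j)}$, $\ch\CV^{(j)}$ and $\CV^{(i-1)}\cap\ch\CV^{(i)}$ explicitly, and reducing both identities to the tautology $\#\{\ell_c\le j\}+\#\{\ell_c>j\}=q$---is exactly the natural argument one would give, and the case analysis (chains with $\ell_c\le j$ versus $\ell_c>j$, and the exclusion of $D$ from $\ch\CV^{(j)}$ for $j<k$) is handled cleanly. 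One small remark: when you exclude $D$ from $\ch\CV^{(j)}$ you should note (as you implicitly rely on) that no combination $aD+\sum b_{c,i}\,\partial_{z^c_i}$ with $a\neq 0$ can be Cauchy either, since bracketing with $\partial_{z^{c_0}_{\ell_{c_0}-j}}$ for any $c_0$ with $\ell_{c_0}>j$ still throws you out of $\CV^{(j)}$; this is what justifies restricting attention to coordinate fields alone when computing both $\chi^j$ and $\chi^i_{i-1}$.
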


\begin{defn}
We say that  $\CV\subset TM$ has {\it refined derived type of a partial prolongation} (of the contact distribution on $J^1(\B R,\B R^q)$) if its type numbers $m_j, \chi^j, \chi^i_{i-1}$ are those of some partial prolongation, which then necessarily satisfy the equalities in Proposition \ref{derivedtype}. 
\end{defn}

\begin{defn} \label{GoursatBundle}
A totally regular sub-bundle $\CV\subset TM$ of derived length $k$ will be called a {\it Goursat bundle} with deceleration $\s$ if
\begin{enumerate}
\item $\CV$ has the derived type of a partial prolongation, with signature $\s=\text{\rm decel}(\CV)$ 
\item Each intersection $\ch\CV^{(i)}_{i-1}$ is an integrable sub-bundle. 
\item  In case $\D_k>1$, then $\CV^{(k-1)}$ determines an integrable Weber structure of rank $\D_k$ on $M$, where $k$ is the derived length of $\CV$.
\end{enumerate}
\end{defn}

The recognition problem for partial prolongations is solved by the generalised Goursat normal form.

\begin{thm}[Generalised Goursat Normal Form, \cite{Vassiliou2006a}]\label{GGNF} 
Let $\CV\subset TM$\ 
be a Goursat bundle over manifold $M$, of derived length $k>1$ and signature 
$\s=\mathrm{ decel}(\CV)$. 
Then there is an open, dense subset $\widehat M\subseteq M$ such that the restriction of $\CV$  to 
$\widehat M$ is locally equivalent to $\mathcal{C}(\s)$. Conversely any partial prolongation of 
$\mathcal{C}^{(1)}_q$ is a Goursat bundle.
\end{thm}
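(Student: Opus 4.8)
The plan is to establish the two implications separately, taking the converse first since it reduces to a direct computation. Given a partial prolongation $\mathcal{C}(\s)$ of $\mathcal{C}^{(1)}_q$ written in the jet-type coordinates of (\ref{BNF}) (the controls relabelled as top-order jet variables), I would compute its derived flag $\CV\subset\CV^{(1)}\subset\cdots\subset\CV^{(k)}=TM$, the Cauchy bundles $\ch\CV^{(j)}$, the intersections $\ch\CV^{(j)}_{j-1}=\CV^{(j-1)}\cap\ch\CV^{(j)}$, and the singular variety of $\wh\CV$. In these coordinates every one of these objects is a span of explicit coordinate vector fields, so one reads off at once that $\CV$ is totally regular, that the type numbers $m_j,\chi^j,\chi^j_{j-1}$ obey the relations of Proposition \ref{derivedtype}, that $\mathrm{decel}(\CV)=\s$, that each $\ch\CV^{(j)}_{j-1}$ is integrable (being a span of coordinate fields), and --- when $\D_k>1$ --- that $\CV^{(k-1)}$ carries a Weber structure whose singular variety is $\mathbb{R}\mathbb{P}^{\D_k-1}$ and whose resolvent bundle is the span of the top-order prolongation directions, hence integrable. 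Thus $\mathcal{C}(\s)$ satisfies all three conditions of Definition \ref{GoursatBundle}.

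For the direct implication I would argue by induction on $\dim M$, the inductive step ``de-prolonging'' a Goursat bundle so as to strictly decrease the dimension of the underlying manifold, and the base case being the smallest dimension in which a Goursat bundle with the given singular variety lives, namely the classical characterisation of the contact distribution on $J^1(\B R,\B R^q)$ as an integrable Weber structure of rank $q$ (cf. the remark immediately following the definition of the resolvent bundle; \cite{Bryant79}, \cite{Yamaguchi}). The geometric content of the de-prolongation is that such a $\CV$ is a prolongation of a lower Goursat bundle, realised by quotienting $M$ by the leaves of a canonically determined integrable sub-bundle $\Pi$ --- morally the top-order prolongation directions, the part of $\CV$ accounting for the last derived jump $\CV^{(k-1)}\to\CV^{(k)}$. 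When $\D_k=1$ the sub-bundle $\Pi$ is pinned down by the Cauchy bundles $\ch\CV^{(j)}$ and the intersections $\ch\CV^{(j)}_{j-1}$; when $\D_k>1$ its identification is more delicate and uses the singular sub-bundle $\wh{\mathcal{B}}$ of the integrable Weber structure carried by $\CV^{(k-1)}$, whose integrability is exactly hypothesis (3) of Definition \ref{GoursatBundle} and for which $\mathbb{P}\wh{\mathcal{B}}$ is precisely the singular variety. One then forms $\mathbf{q}:M\to\bar M:=M/\Pi$ and establishes the key lemma: the induced sub-bundle $\bar\CV\subset T\bar M$ --- the one from which $\CV$ is recovered by a single prolongation in the $\Pi$-directions --- is again a Goursat bundle, of strictly shorter derived length, with $\mathrm{decel}(\bar\CV)=\bar\s$ the de-prolongation of $\s$. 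The induction hypothesis then furnishes an open dense $\bar U\subseteq\bar M$ with $\bar\CV|_{\bar U}\cong\mathcal{C}(\bar\s)$.

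It remains to reconstruct the normal form upstairs. Pulling the jet-type coordinates on $\bar U$ back along $\mathbf{q}$ and adjoining functions that parametrise the leaves of $\Pi$, one uses the structure tensor of $\CV$ --- which encodes that $\CV^{(1)}$ has exactly the corank forced by the type numbers, so that the brackets of the total-derivative field against the $\Pi$-directions are nondegenerate and connect consecutive jet levels --- to normalise the adjoined functions so that the $1$-forms they provide are precisely the top-order contact forms $\o^j_{\k_j}=dx^j_{\k_j}-v_j\,dt$ missing from $\mathcal{C}(\s)$. The locus on which all the rank and genericity conditions invoked along the way hold simultaneously --- those needed to evaluate the singular variety at a generic point, to single out $\Pi$, and to complete the adjoined functions to a chart, accumulated over the finitely many de-prolongation steps --- is the open dense subset $\wh M$ of the statement.

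I expect the main obstacle to be the inductive lemma that $\bar\CV$ is a Goursat bundle, i.e. keeping precise track of how the refined derived type, the Cauchy bundles, the singular variety and --- most delicately --- the integrability of the Weber structure behave under the quotient by $\Pi$; together with the verification in the reconstruction that the adjoined coordinates genuinely complete a chart and deliver the \emph{exact} contact normal form rather than a merely equivalent distribution. The case $\D_k>1$ is where the difficulty concentrates: without the integrable-Weber-structure hypothesis the de-prolongation direction is not determined by the derived flag alone --- generic $2$-plane fields on $5$-manifolds \cite{Cartan10} being the cautionary example --- so the argument must lean on $\mathbb{P}\wh{\mathcal{B}}$ being precisely the singular variety and on its integrability to single out $\Pi$ canonically and to guarantee that the quotient strips off exactly the top-order structure.
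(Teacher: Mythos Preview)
The paper does not prove this theorem: it is quoted from \cite{Vassiliou2006a} as an established result, so there is no in-paper proof to compare against. What the paper \emph{does} reveal about the original argument is the pair of procedures \textbf{Contact A} and \textbf{Contact B} reproduced in the Appendix, whose correctness is proved in \cite{Vassiliou2006b}. These indicate that the published proof is \emph{constructive} rather than inductive: one identifies the fundamental functions of each order $j$ as first integrals of the fundamental bundles $\Xi^{(j)}_{j-1}/\Xi^{(j)}$ (and, when $\D_k>1$, of the resolvent bundle $R(\CV^{(k-1)})$), fixes an independent variable $x$ and a section $\mathbf{Z}\in\CV$ with $\mathbf{Z}x=1$, and then generates \emph{all} contact coordinates in one pass by iterated application of $\mathbf{Z}$ to the fundamental functions. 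The proof then amounts to checking functional independence of the resulting functions and verifying that $\CV$ takes the form $\mathcal{C}(\s)$ in them.

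Your de-prolongation induction is a legitimate alternative strategy, closer in spirit to classical proofs of the Goursat normal form, and your identification of where the difficulty concentrates (tracking the Weber structure through the quotient when $\D_k>1$, and the coordinate-completion step upstairs) is accurate. The trade-off is this: the constructive route avoids the inductive bookkeeping entirely by reading off the normalising data directly from the invariant filtration of $\CV$, at the cost of a somewhat heavier up-front analysis of the fundamental bundles and resolvent; your route is conceptually cleaner step-by-step but accumulates genericity hypotheses and requires the delicate lemma that the quotient $\bar\CV$ is again a Goursat bundle with the de-prolonged signature. Neither approach is wrong, but since the theorem is cited rather than proved here, there is nothing in this paper to adjudicate between them.
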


A partial prolongation is generically classified, up to a local diffeomorphism of the ambient manifold, by its deceleration vector. For this reason the deceleration of a Goursat bundle $\CV$ will sometimes be called its {\it signature}, a unique identifier of its local diffeomorphism class. If $\CV$ is a Goursat bundle and non-negative integer $\rho_j$ is the $j^\text{\rm th}$ component of its signature, then $\CV$ is locally diffeomorphic to a partial prolongation with $\rho_j$ ``dependent variables of order $j$". The theorem has a counterpart which provides an efficient procedure, {\tt\bf Contact}, for constructing an equivalence to $\mathcal{C}(\s)$ where $\s=\text{decel}(\CV)$ is the signature of $\CV$. Procedure {\tt\bf Contact} is described in detail in \cite{Vassiliou2006b} and will be used in this paper. The basic result is that one characterises the total differential operators and the function spaces that are used to generate the equivalences by differentiation. The first integrals of the resolvent bundle and those of the {\it fundamental bundles} $\Xi^{(j)}_{j-1}(\CV)/\Xi^{(j)}(\CV)$ 
\cite{Vassiliou2006b} are the required functions, where $\Xi^{(j)}(\CV)=\text{ann}\,\ch\CV^{(j)}$ and  $\Xi^{(j)}_{j-1}(\CV)=\text{ann}\,\ch\CV^{(j)}_{j-1}$.  If $\D_k=1$ then the resolvent is replaced by another integrable distribution, $\Pi^k$, whose construction is described in the example below.

\begin{exmp}
As a simple illustration, let us revisit the Hunt-Su-Meyer example \cite{HuntSuMeyer} using the generalised Goursat normal form.
The distribution in question is
$$
\CV=\Big\{\P t+\sin x_2\P {x_1}+\sin x_3\P {x_2}+(u_1+x_4^3)\P {x_3}+(x_4+x_4^3-x_1^{10})\P {x_4}+u_2\P {x_5},\ \P {u_1},\ \P {u_2} \Big\}
$$
We compute the refined derived type\footnote{Special purpose software based on the {\bf\tt Maple} package {\bf\tt DifferentialGeometry} \cite{AndersonTorre} computes this in about 3 seconds and simultaneously determines whether or not $\CV$ is a Goursat bundle. If it is, it outputs the signature of the equivalent Brunovsky normal form. } 
$$
\mathfrak{d}_r(\CV)=\big[\,\big[3,0\big], \big[5,2,2\big], \big[7,4,5\big], \big[8,8\big]\,\big]. 
$$
It is easily checked that $\mathfrak{d}_r(\CV)$ is the refined derived type of a contact distribution with signature $\text{decel}(\CV)=\langle 0, 1, 1\rangle$. That is, the type numbers $m_j, \chi^j$ and $\chi^j_{j-1}$ satisfy Proposition \ref{derivedtype} with signature $\langle 0,1,1\rangle$. Since, as we shall see $\ch\CV^{(2)}_1$ is integrable, we conclude that $\CV$ is locally diffeomorphic to of the Brunovsky normal form 
${B}\langle 2,3\rangle\simeq \mathcal{C}\langle 0,1,1\rangle$,
$$
\begin{aligned}
&dx_0-x_1dt,\ dx_1-v_1dt,\cr
&dy_0-y_1dt,\ dy_1-y_2dt,\ dy_2-v_2dt.
\end{aligned} 
$$
This settles the recognition problem for $\CV$.

We now use the proceedure {\tt\bf Contact} to construct an equivalence to this Brunovsky form. The derived length is $k=3$ and since $\D_3=1$, we must compute $\Pi^3$ as well as 
any nontrivial fundamental bundles; see procedure {\bf\tt Contact\,A}, step b) in \cite{Vassiliou2006b}.  From the refined derived type we see that the only non-trivial fundamental bundle is 
$$
\ds \Xi^{(2)}_1/\Xi^{(2)}=\{dx_4\}.
$$
and hence $x_4$ is a {\it fundamental function of order} 2. With $\Xi^{(2)}=\{dt, dx_1, dx_2\}$ we can choose $t$ for the independent variable in which case the operator of total differentiation can be taken to be 
$$
Z=\P t+\sin x_2\P {x_1}+\sin x_3\P {x_2}+(u_1+x_4^3)\P {x_3}+(x_4+x_4^3-x_1^{10})\P {x_4}+u_2\P {x_5}\in \CV.
$$
We can now compute $\Pi^3$ inductively by starting with $\Pi^1=\ch\CV^{(1)}_0=\ch\CV^{(1)}=\{\P {u_1}, \P {u_2}\}$. Then $\Pi^{i+1}=\Pi^i+[Z,\Pi^i], 1\leq i\leq k-1$. In this case we obtain
$$
\text{ann}\,\Pi^3=\{dt, dx_1\}
$$
and hence the {\it fundamental function of highest order} 3 is $x_1$. In the context of flat systems we would say that $x_1$ (the function to be differentiated 3 times) and $x_4$ (the function to be differentiated twice) are the flat outputs. Hence we obtain, with $z^{1,2}_0=x_4$, $z^{1,3}_0=x_1$,

$$
\begin{aligned}
&z^{1,3}_0=x_1,\ \ \ \ \ \ 
z^{1,3}_1=\mathbf{Z}z^{1,3}_0=\sin {x_2},\ \ \ \ \ 
z^{1,3}_2=\mathbf{Z}z^{1,3}_1=\cos x_2\sin x_3,\cr 
&z^{1,3}_3=\mathbf{Z}z^{1,3}_2=\cos x_2\cos x_3(u_1+x_4^3)-\sin^2x_3\sin x_2,\ \ \ \ \ 
z^{1,2}_0=x_4,\ \ \ \ z^{1,2}_1=\mathbf{Z}z^{1,2}_0=x_5+x_4^3-x_1^{10},\cr  
&z^{1,2}_2=\mathbf{Z}z^{1,2}_1=u_2+3x_4^2(x_5+x_4^3-x_1^{10})-10x_1^9\sin x_2.
\end{aligned}
$$
\end{exmp}

\subsection{Extended static feedback transformations} The generalised Goursat normal form  primarily solves the problem of {\it general equivalence} of a differential system $(M, \CV)$ to a partial prolongation $\mathcal{C}(\s)$ of the contact distribution $\mathcal{C}^1_q$. That is, the existence of {\it some} local diffeomorphism $\varphi:M\to J^\s(\B R,\B R^q)$ such that $\varphi_*\CV=\CC\langle\s\rangle$. If such an equivalence between $(M, \CV)$ and $(J^\s(\B R, \B R^q), \mathcal{C}(\s))$ exists then there also exists an infinite dimensional family of equivalences since the contact transformations form an infinite Lie pseudogroup. If $(M, \CV)$ is a control system then it is of great importance to know that within the infinite dimensional family of equivalences at least one can be chosen to be an extended static feedback transformation, which is a natural and simple generalisation of static feedback transformation to the case of {\it time-varying} control. 

\begin{defn}[Extended static feedback transformations]
A local diffeomorphism of the manifold of states, controls and time, $\bsy{x},\ \bsy{u},\ t$ of the form
$$
t\mapsto t,\ \bsy{x}\mapsto \bsy{\bar{x}}=\bsy{\a}(t, \bsy{x}),\ \bsy{u}\mapsto \bsy{\bar{u}}=\bsy{\b}(t, \bsy{x},\bsy{u})
$$
identifying a pair of control systems $\{\P t+\bsy{f}(t,\bsy{x},\bsy{u})\P {\bsy{x}},\ \P {\bsy{u}}\}$ and $\{\P t+\bsy{\bar{f}}(t,\bsy{\bar{x}},\bsy{\bar{u}})\P {\bsy{\bar{x}}},\ \P {\bsy{\bar{u}}}\}$
will be called an {\it extended static feedback transformation} (ESFT).   
\end{defn}

The existence of an ESFT identifying a control system to a Brunovsky normal form can be usefully established in terms of the generalised Goursat normal form.

\begin{thm}\label{StatFeedbackLin}
Let $\CV=\{\P t+\bsy{f}(t,\bsy{x},\bsy{u})\P {\bsy{x}},\ \P {\bsy{u}}\}$ be a control system defining a totally regular sub-bundle of $TM,\ \dim M=n+q+1$. Suppose $(M, \CV)$ is a Goursat bundle so that it is equivalent to a partial prolongation
$\mathcal{C}(\s)$ via local diffeomorphism $\varphi: M\to J^\s(\B R, \B R^q)$,  
$\varphi_*\CV=\mathcal{C}(\s)$, with derived length $k>1$. Then $\varphi$ can be chosen to be an extended static feedback transformation if and only if 
\begin{enumerate}
\item[1)] $\{\P {\bsy{u}}\}\subseteq\ch\CV^{(1)}_0$
\item[2)] $dt\in\mathrm{ ann}\,\ch\CV^{(k-1)}$  
\end{enumerate}
\end{thm}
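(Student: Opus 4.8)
The plan is to exploit the fact that the generalised Goursat normal form (Theorem \ref{GGNF}) already supplies \emph{some} equivalence $\varphi_0:M\to J^\s(\B R,\B R^q)$ with $(\varphi_0)_*\CV=\CC\langle\s\rangle$, and then to characterise precisely when the freedom within the infinite pseudogroup of contact transformations can be used to upgrade $\varphi_0$ to an ESFT. The key observation is that an ESFT is exactly a contact transformation that (i) preserves the independent variable $t$ up to the identity, i.e. $\bar t=t$, and (ii) is triangular in the sense that the new states $\bsy{\bar x}$ depend only on $(t,\bsy x)$ and not on the controls. Since procedure {\tt\bf Contact} builds an equivalence out of first integrals of the resolvent bundle (or $\Pi^k$) and the fundamental bundles $\Xi^{(j)}_{j-1}/\Xi^{(j)}$, the task is to show that these building blocks can be chosen to have the required dependence exactly under hypotheses 1) and 2).

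First I would establish necessity. If $\varphi$ is an ESFT then $t$ is a coordinate function on $J^\s(\B R,\B R^q)$, so $dt$ is the pullback of the contact base form $dx$ on the jet space; on the model $dx$ annihilates $\ch\CC\langle\s\rangle^{(k-1)}$ (for any partial prolongation of derived length $k>1$ the top Cauchy bundle is spanned by the ``vertical'' coordinate vector fields and does not see $\P x$), hence $dt\in\mathrm{ann}\,\ch\CV^{(k-1)}$, giving 2). For 1): under an ESFT the control vector fields $\P{\bsy u}$ are sent into the span of the top-order jet directions, which on the model lie inside $\ch\CC\langle\s\rangle^{(1)}_0$; since $\ch\CV^{(1)}_0$ is a diffeomorphism invariant (intersection of the invariantly defined $\CV^{(0)}$ with the Cauchy bundle of $\CV^{(1)}$), pulling back gives $\{\P{\bsy u}\}\subseteq\ch\CV^{(1)}_0$, which is 1). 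Conversely, assuming 1) and 2), I would run {\tt\bf Contact} while tracking these two hypotheses through the construction: hypothesis 2) forces $dt$ to be a first integral of $\ch\CV^{(k-1)}$, hence $t$ may be taken as \emph{the} independent variable in every total-differentiation operator $Z\in\CV$ used by the procedure (so $Z t = 1$ throughout, which already pins $\bar t = t$); hypothesis 1) guarantees that the fibres of the submersion onto state-space are tangent to $\{\P{\bsy u}\}$, so that the fundamental functions produced by the procedure, being first integrals of the fundamental bundles which by 1) contain $\{\P{\bsy u}\}$ at the bottom level, depend only on $(t,\bsy x)$. Iterating $Z$-differentiation then produces new coordinates $z^{i,j}_\ell$, of which the genuinely new ``state'' coordinates are the $z^{i,j}_\ell$ with $\ell$ below top order; because $Z$ is a single well-defined operator and the starting functions are $t$- and $\bsy x$-dependent only, an induction on $\ell$ shows these remain functions of $(t,\bsy x)$ alone, while the top-order ones absorb the controls — exactly the triangular shape $\bsy{\bar x}=\bsy\a(t,\bsy x)$, $\bsy{\bar u}=\bsy\b(t,\bsy x,\bsy u)$ of an ESFT.

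The main obstacle I anticipate is the inductive bookkeeping in the $\D_k>1$ case, where the construction uses first integrals of the resolvent bundle $R(\CV)$ rather than of $\Pi^k$. There one must check that the resolvent bundle, which is canonically built from the singular sub-bundle $\wh{\mathcal B}$ of the Weber structure determined by $\CV^{(k-1)}$, has among its first integrals a full complement of functions that are controls-free; the delicate point is that $R(\CV)$ need not a priori contain $\{\P{\bsy u}\}$, so one has to argue that hypothesis 1) together with 2) forces the leaves of $R(\CV)$ to project submersively onto state-space, using that $\ch\CV^{(1)}_0\supseteq\{\P{\bsy u}\}$ sits inside the Cauchy bundle of $\CV^{(1)}$ and hence inside $R(\CV)$ by maximality of the resolvent. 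Granting this, the remaining steps are the routine verification that the resulting coordinate map is a diffeomorphism onto an open dense subset of $J^\s(\B R,\B R^q)$ (already guaranteed by Theorem \ref{GGNF}) and that it carries $\CV$ to $\CC\langle\s\rangle$ in the stated form, completing both directions. \hfill\qed
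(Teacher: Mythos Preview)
Your overall strategy mirrors the paper's: prove necessity by pulling back the jet-space picture through an ESFT, and prove sufficiency by running procedure {\tt\bf Contact} while tracking the functional dependence of the coordinates it produces. The necessity half is fine (and your argument for 1) is in fact cleaner than the paper's: you simply observe that $\varphi_*\P{u_a}=\sum_p(\partial\b^p/\partial u_a)\,\P{z^p_{k_p}}$ lands in the top-order directions, which sit inside $\ch\CC\langle\s\rangle^{(1)}_0$, and pull back).

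The sufficiency half, however, has a genuine gap. Your claim that ``because $Z$ is a single well-defined operator and the starting functions are $t$- and $\bsy x$-dependent only, an induction on $\ell$ shows these remain functions of $(t,\bsy x)$ alone'' is false as stated. The operator $Z=\P t+\bsy f(t,\bsy x,\bsy u)\P{\bsy x}$ has $u$-dependent coefficients, so applying it to a function of $(t,\bsy x)$ will in general produce a function that \emph{does} depend on $\bsy u$. The induction simply does not go through on its own. What actually makes the intermediate coordinates $\psi^{\ell_j,j}_s$, $0\le s\le j-1$, independent of $\bsy u$ is a structural fact about Goursat bundles (proved in \cite{Vassiliou2006b}, Theorem~4.2, and invoked by the paper at exactly this point): these functions are automatically first integrals of $\ch\CV^{(1)}_0$. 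Hypothesis~1) then says $\{\P{\bsy u}\}\subseteq\ch\CV^{(1)}_0$, whence $u$-independence follows. Your sketch hints at something like this when you say the fundamental functions are first integrals of bundles ``which by 1) contain $\{\P{\bsy u}\}$ at the bottom level'', but you then abandon that line for the faulty induction; you need to stay with the first-integral argument and extend it from the seed functions $\psi^{\ell_j,j}_0$ to all the intermediate $\psi^{\ell_j,j}_s$, which is precisely the content of the cited result and is not a triviality. The $\D_k>1$ resolvent case you flag as the anticipated difficulty is in fact secondary; the real issue is already present in the $\D_k=1$ case.
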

\vskip 4 pt
\noindent{\it Proof.} Suppose $\varphi$ is an ESFT identifying $(M, \CV)$ with partial prolongation $\mathcal{C}( \s )$. Then $\varphi$ has the form
\begin{equation}\label{equivalence}
(t,\ \bsy{x},\ \bsy{u})\mapsto (t,\ \bsy{\a}(t, \bsy{x}),\ \bsy{\b}(t, \bsy{x},\bsy{u}))=(x,\ \bsy{z}^p_{j_p},\ \bsy{z}^p_{k_p}),\ \ 0\leq j_p\leq k_p-1,
\end{equation}
where the transformation $(t,\ \bsy{x})\mapsto (t,\ \bsy{\a}(t, \bsy{x}))$ is also a local diffeomorphism. Here $p$ is an index for the contact coordinates of order $k_p$; the largest of these is equal to the derived length of $\CV$. In contact coordinates $x$ can be taken to be the parameter along the trajectories of $\mathcal{C}(\s)$ and it is easy to see that it is a first integral of $\ch\mathcal{C}(\s)^{(k-1)}$. Hence
$\varphi^*x=t$ is a first integral of $\ch\CV^{(k-1)}$ since $\varphi$ identifies Cauchy bundles.  
It is straightforward to see that $\bsy{z}^p_{j_p}$, $0\leq j_p\leq k_p-1$ are first integrals of 
$\ch\mathcal{C}(\s)^{(1)}_0$ and hence $\varphi^*\bsy{z}^p_{j_p}=\bsy{\a}(t,\bsy{x})$ span the first integrals of $\ch\CV^{(1)}$. 
The elements of $\ch\CV^{(1)}_0$ are spanned by vector fields
$$
Y=T\P t+\sum_{i=1}^nA^i\P {x_i}+\sum_{\ell=1}^qB_\ell\P {u_\ell}
$$
We have $d\bsy{\a}(Y)=0$, and we deduce that $T=0$ (since $t$ is a first integral of $\ch\CV^{(1)}\subset\ch\CV^{(k-1)}$) and
$$ 
\frac{\partial (\a^1,\ \a^2,\ldots,\ \a^n)}{\partial (x_1,\ x_2,\ \ldots,\ x_n)}\bsy{A}=0
$$
where $\bsy{A}=\left(\begin{matrix}  A^1 & A^2 & \cdots & A^n\end{matrix}\right)^T$. It follows that $\bsy{A}=0$ since the components of $\bsy{\a}$ are functionally independent. Hence $\ch\CV^{(1)}_0$ contains vector fields of the form $Y=\sum_{\ell=1}^qB_\ell\P {u_\ell}$ only.
Let $H$ be the set of all vector fields of the form $\{Y_s=B_s^\ell\P {u_\ell}\}\subseteq\ch\CV^{(1)}_0$ which have the $n+1$ functions $(t,\ \bsy{\a}(t,\bsy{x}))$ as functionally independent first integrals. Because $\ch\CV^{(1)}_0$ is Frobenius, we have $H^{(\infty)}\subseteq\ch\CV^{(1)}_0$. Suppose
$H^{(\infty)}\neq \{\P {u_1},\, \P {u_2},\, \ldots\, \P {u_q} \}$. Then there is a first integral of $H$ which has $u$-dependence which contradicts the functional form of $\varphi$.

Conversely suppose hypotheses $1)$ and $2)$ of the theorem statement hold with $(M,\CV)$ a Goursat manifold so that local diffeomorphism $\varphi$ exists which identifies $\CV$ with partial prolongation 
$\mathcal{C}(\s)$. By procedure {\tt\bf Contact}, \cite{Vassiliou2006b}, pp 286--287, hypothesis $2)$ implies that $\varphi^*x=t$ can be taken to be an independent variable in the image system $\CC\langle\s\rangle$; that is, a parameter along the trajectories of $\CV$.   Further, according to procedure {\tt\bf Contact}, the components of the transformation to Brunovsky normal are constructed by differentiating the fundamental functions of order $j$, $\psi^{\ell_j,j}_0$, $1\leq \ell_j\leq \r_j$,  by the total differential operator $\mathbf{Z}$: 
$$
\psi^{\ell_j,j}_0,\ \  \psi^{\ell_j,j}_1=\mathbf{Z}\psi^{\ell_j,j}_0,\ \ \ldots,\ \ \psi^{\ell_j,j}_{j}=\mathbf{Z}\psi^{\ell_j,j}_{j-1}.
$$
The proof of Theorem 4.2 in \cite{Vassiliou2006b} shows that the functions $\{\psi^{\ell_j,j}_s\}_{s=0}^{j-1}$ are first integrals of $\ch\CV^{(1)}_0$. Hypothesis 1) in the theorem statement therefore allows us to conclude that $\varphi$ has the form (\ref{equivalence}).  Hence $\varphi$ is an ESFT. \hfill\qed



\section{Linearizable Quotients. Flat Quotients.}\label{LinearQuotients}
If an invariant control system is not static feedback linearizable or even flat, it is desirable to know of the existence of static feedback linearizable quotients. Otherwise, it is desirable to know of the existence of flat quotients. In either case it would be useful to have an {\it a priori} algorithmic test for the existence of such quotients. That is, without the necessity of constructing the quotient first. 

In this section we use the results of section \ref{contactFlat} to give such an {\it a priori} check for the existence of static feedback linearizable quotients only using as data knowledge of the Lie algebra of infinitesimal generators of the symmetries of the control system.  
To establish this, we introduce a slight refinement of the notion of Goursat bundle.

\begin{defn}\label{relativeGoursatBundle}
Let $(M, \CV)$ be totally regular distribution over manifold $M$. We say that $\CV$ is a {\it relative Goursat bundle} if its type numbers satisfy Proposition \ref{derivedtype} and items 2. and 3. of Definition 
\ref{GoursatBundle} are satisfied. 
\end{defn}

\begin{prop}\label{relativeGoursatProp1}
Let $(M, \CV)$ be a relative Goursat bundle with type numbers $m_j,\ \chi^j,\ \chi^i_{i-1}$ and suppose $\dim\ch\CV^{(1)}_0=c$. Then $\CV/\ch\CV^{(1)}_0$ is a Goursat bundle and the type numbers $\bar{m}_j,\ \bar{\chi}^j,\ \bar{\chi}^i_{i-1}$ of $\CV/\ch\CV^{(1)}_0$ satisfy 
\begin{equation}\label{CauchyCorrected}
m_j-\bar{m}_j=\chi^j-\bar{\chi}^j=\chi^i_{i-1}-\bar{\chi}^i_{i-1}=c.
\end{equation}
\end{prop}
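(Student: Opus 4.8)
The strategy is to realise $\CV/\ch\CV^{(1)}_0$ concretely as the push-forward of $\CV$ by the submersion onto the leaf space of the foliation determined by $\ch\CV^{(1)}_0$, and then to follow each piece of the refined derived type through that submersion. Write $\mathcal{F}:=\ch\CV^{(1)}_0$. Since $\CV$ is a relative Goursat bundle, item 2 of Definition \ref{GoursatBundle} (with $i=1$) shows $\mathcal{F}$ is an integrable sub-bundle, and total regularity makes its rank the constant $c$. Let $\mathbf{q}\colon M\to N:=M/\mathcal{F}$ be the (locally defined) quotient submersion. The first task is to place $\mathcal{F}$ inside every bundle entering the refined derived type of $\CV$. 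Two facts do this: trivially $\mathcal{F}\subseteq\CV=\CV^{(0)}\subseteq\CV^{(j)}$ for all $j$; and the elementary Jacobi-identity observation that $\ch\CW\subseteq\ch\CW^{(1)}$ for every distribution $\CW$ (if $[X,\CW]\subseteq\CW$ then by the Jacobi identity $[X,[\CW,\CW]]\subseteq[\CW,\CW]$, so $[X,\CW^{(1)}]\subseteq\CW^{(1)}$ while $X\in\CW^{(1)}$). Iterating the latter on $\CW=\CV^{(1)},\CV^{(2)},\dots$ gives $\ch\CV^{(1)}\subseteq\ch\CV^{(2)}\subseteq\cdots$, hence $\mathcal{F}=\CV\cap\ch\CV^{(1)}\subseteq\ch\CV^{(j)}$ for every $j\ge1$, and combining with the first fact, $\mathcal{F}\subseteq\CV^{(i-1)}\cap\ch\CV^{(i)}=\ch\CV^{(i)}_{i-1}$ for $1\le i\le k-1$ as well.

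With these nestings in hand I would transfer the structure to $N$. Because $\mathcal{F}$ lies in $\ch\CV^{(j)}$ for $j\ge1$ and, by the way the quotient of a relative Goursat bundle is constructed (cf. Definition \ref{QuotientDef} and Lemma \ref{semiBasicDistribution}), is compatible with $\CV$ itself, each derived bundle $\CV^{(j)}$ is $\mathbf{q}$-projectable and $\overline{\CV^{(j)}}:=\mathbf{q}_*\CV^{(j)}$ is a well-defined totally regular distribution on $N$ of rank $m_j-c$. An induction on $j$, using that $\mathbf{q}$-related vector fields bracket to $\mathbf{q}$-related ones so that $\mathbf{q}_*$ intertwines the derived-system operation on projectable distributions, shows that the derived flag of $\bar\CV:=\overline{\CV^{(0)}}$ is precisely $\{\overline{\CV^{(j)}}\}_{j\ge0}$; in particular $\bar\CV$ has derived length $k$ and $\bar m_j=m_j-c$, which is the first displayed equality. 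The same mechanism handles Cauchy bundles: since $\mathcal{F}\subseteq\ch\CV^{(j)}$, a projectable vector field is a Cauchy characteristic of $\CV^{(j)}$ iff its push-forward is one of $\overline{\CV^{(j)}}=\bar\CV^{(j)}$, so $\ch\bar\CV^{(j)}=\mathbf{q}_*\ch\CV^{(j)}$ and $\bar\chi^j=\chi^j-c$; and since $\mathcal{F}\subseteq\ch\CV^{(i)}_{i-1}$, likewise $\ch\bar\CV^{(i)}_{i-1}=\mathbf{q}_*\ch\CV^{(i)}_{i-1}$ and $\bar\chi^i_{i-1}=\chi^i_{i-1}-c$. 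This yields the remaining two equalities of (\ref{CauchyCorrected}).

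It remains to verify that $\bar\CV$ satisfies the three conditions of Definition \ref{GoursatBundle}, i.e. is a genuine Goursat bundle. The refined-derived-type relations of Proposition \ref{derivedtype} are invariant under subtracting a common constant from all of $m_j,\chi^j,\chi^i_{i-1}$, since $2(m_j-c)-(m_{j+1}-c)-1=\chi^j-c$ and $(m_{i-1}-c)-1=\chi^i_{i-1}-c$; so the barred type numbers again satisfy those relations and hence constitute the refined derived type of a partial prolongation, with $\mathrm{decel}(\bar\CV)$ read off in the usual way, giving item 1. Item 2 holds because each $\ch\bar\CV^{(i)}_{i-1}=\mathbf{q}_*\ch\CV^{(i)}_{i-1}$ is the image under a submersion of an integrable bundle, hence integrable. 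For item 3, $\bar\D_k=\D_k$ because every $m_j$ drops by the same $c$, so the hypothesis $\bar\D_k>1$ is equivalent to $\D_k>1$; and when it holds, the integrable Weber structure carried by $\CV^{(k-1)}$ descends to $\overline{\CV^{(k-1)}}=\bar\CV^{(k-1)}$, the singular variety, singular sub-bundle and resolvent bundle being diffeomorphism invariants whose formation commutes with the quotient by $\mathcal{F}$ precisely because $\mathcal{F}\subseteq\ch\CV^{(k-1)}$, so integrability of the resolvent is preserved. Thus $\CV/\ch\CV^{(1)}_0$ is a Goursat bundle.

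The step I expect to be the real obstacle, and the one needing the most care in writing out, is the legitimacy of the very first transfer: giving the precise sense in which $\CV/\ch\CV^{(1)}_0$ is a distribution on $N$ and proving that its derived flag is obtained fibrewise by dividing by $\mathcal{F}$, i.e. $\bar\CV^{(j)}=\mathbf{q}_*\CV^{(j)}$. For $j\ge1$ this is clean because $\mathcal{F}$ sits inside $\ch\CV^{(j)}$; the subtlety is the behaviour of $\CV^{(0)}=\CV$ under the quotient --- exactly the feature distinguishing a relative Goursat bundle from a Goursat bundle --- which must be controlled using the relative Goursat hypotheses together with the description of quotients in Section 2. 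Once that transfer is secured, everything downstream is bookkeeping with the Jacobi identity and with dimensions.
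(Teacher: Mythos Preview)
Your approach is essentially the paper's: pass to coordinates adapted to the integrable foliation $\mathcal{F}=\ch\CV^{(1)}_0$, split $\CV$ as $\CH\oplus\mathcal{F}$ with $[\CH,\mathcal{F}]=0$, and read off the refined derived type of the quotient from the resulting product structure $\CV^{(j)}=\CH^{(j)}\oplus\mathcal{F}$; you are in fact more explicit than the paper, supplying the Jacobi-identity argument for $\mathcal{F}\subseteq\ch\CV^{(j)}$ and checking the Goursat conditions on the quotient, whereas the paper packages all of this into the single assertion that a commuting basis of $\CH$ exists and then records the dimension relations. The step you flag as the real obstacle---projectability of $\CV^{(0)}$ itself under $\mathbf{q}$---is exactly the one the paper dispatches with ``it is not hard to see that there is a basis change in $\CH$\ldots''; note that in the applications (Theorems~\ref{quotientCheck} and~\ref{StatLinQuotients}) one has $\ch\widehat\CV=\ch\widehat\CV^{(1)}_0=\bsy{\G}$, which makes that step automatic there.
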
 
\vskip 0 pt
\noindent{\it Proof.} Suppose $\dim M=m$ and $\varphi^1(x),\ldots,\varphi^{m-c}(x)$ span the first integrals of $\ch\CV^{(1)}_0$. Extend these functions by $\psi^1(x),\ldots,\psi^c(x)$ to a local coordinate system on $M$. 
Denote by $\tau: M\to M$ the local diffeomorphism defined by 
$$
x\mapsto (\varphi^1(x),\ldots,\varphi^{m-c}(x),\ \psi^1(x),\ldots,\psi^c(x))=(y^1,\ldots,y^{m-c}, z^1,\ldots,z^c).
$$
Then we can write
\begin{equation}\label{quotientDecomp}
\tau_*\CV=\CH\oplus\t_*\ch\CV^{(1)}_0
\end{equation} 
in which $\CH=\t_*\CV\,\text{mod}\ \t_*\ch\CV^{(1)}_0$. As the first $m-c$ components of $\t$ are those of the quotient map $\mathbf{q}: M\to M/\ch\CV^{(1)}_0$, 
it is not hard to see that there is a basis change in $\CH$, 
$\{\bar{X}_1,\ldots,\bar{X}_n\}$ such that $[\P {z_\ell},\bar{X}_j]=0$, where $\t_*\ch\CV^{(1)}_0=\{\P {z_1},\ldots,\P {z_c}\}:=\CZ$. Hence $\CH=\mathbf{q}_*\CV$ is the quotient of $\CV$ by the leaves of the foliation induced by $\ch\CV^{(1)}_0$. We have expressed $(M,\CV)$ in a local trivialization
$(U\times Z, \CH\oplus\CZ)$, where $U\subset M/G$  and we have $[\CH^{(i)},\CZ^{(i)}]=0$ and $\CH^{(i)}\cap\CZ^{(i)}=0$ for all $i\geq 0$. We deduce that 
\begin{equation}\label{lemEq1}
\begin{aligned}
\dim\CV^{(i)}=&\dim\CH^{(i)}+c,\  \dim\ch\CV^{(j)}=\dim\ch\CH^{(j)}+c,\cr
  &\dim\ch\CV^{(i)}_{i-1}=\dim\ch\CH^{(i)}_{i-1}+c
\end{aligned}
\end{equation}
Distribution $\CH$ has type numbers $\bar{m}_j,\ \bar{\chi}^j,\ \bar{\chi}^i_{i-1}$ while  $\t_*\CV=\CH\oplus\CZ$ has type numbers
$m_j,\ \chi^j,\ \allowbreak \chi^i_{i-1}$. The relation between the two sets of type numbers follows from (\ref{lemEq1}) and the type numbers of $\CV/\ch\CV^{(1)}_0$ are those of a partial prolongation and hence, a Goursat bundle. \hfill\qed

\begin{prop}\label{relativeGoursatProp2}
If $(M, \CV)$ is a relative Goursat bundle then $\mathrm{decel}\,\CV=\mathrm{decel}\,\left(\CV/\ch\CV\right)$. 
\end{prop}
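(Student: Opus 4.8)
The deceleration of a totally regular sub-bundle is computed purely from the dimensions $m_j=\dim\CV^{(j)}$ --- via the velocities $\D_j=m_j-m_{j-1}$ and their second differences --- so it is enough to prove that passing from $\CV$ to $\CV/\ch\CV$ lowers \emph{every} $m_j$ by the single constant $c_0:=\dim\ch\CV$. The plan is to rerun the local reduction used in the proof of Proposition~\ref{relativeGoursatProp1}, but now modulo the foliation of $M$ by the leaves of $\ch\CV$ rather than of $\ch\CV^{(1)}_0$. What makes this version clean is that $\ch\CV$ is a \emph{genuine} Cauchy bundle of $\CV$: it is integrable and $[\ch\CV,\CV]\subseteq\CV$ holds exactly.

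The first thing to record is the structural fact that a Cauchy characteristic of $\CV$ is a Cauchy characteristic of every one of its derived bundles, i.e.\ $\ch\CV\subseteq\ch\CV^{(j)}$ for all $j\geq0$. This is a one-line Jacobi-identity induction: if $X\in\ch\CV^{(j)}$ and $Y,Z\in\CV^{(j)}$ then $[X,[Y,Z]]=[[X,Y],Z]+[Y,[X,Z]]\in[\CV^{(j)},\CV^{(j)}]\subseteq\CV^{(j+1)}$ since $[X,Y],[X,Z]\in\CV^{(j)}$, whence $\ch\CV^{(j)}\subseteq\ch\CV^{(j+1)}$; starting from $j=0$ this gives the claim, total regularity keeping all ranks constant. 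In particular $[\ch\CV,\CV^{(j)}]\subseteq\CV^{(j)}$ and $\ch\CV\subseteq\CV\subseteq\CV^{(j)}$ for every $j$.

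Next I would pass to the local quotient. Since $\ch\CV$ is integrable of constant rank $c_0$, choose local coordinates $(y,z)$, $z=(z^1,\ldots,z^{c_0})$, in which $\CZ:=\ch\CV=\langle\P{z^1},\ldots,\P{z^{c_0}}\rangle$ and the $y$'s are first integrals of $\CZ$; let $\mathbf{q}\colon M\to M/\ch\CV$ be the local leaf projection. Because $[\ch\CV,\CV]\subseteq\CV$, the flows of $\ch\CV$ preserve $\CV$, so $\bar{\CV}:=\mathbf{q}_*\CV$ is a well-defined sub-bundle of $T(M/\ch\CV)$; lifting a local frame of $\bar{\CV}$ to vector fields on $M$ with $y$-only coefficients yields a frame $\{\bar{X}_1,\ldots,\bar{X}_n\}$ of a complement $\CH$ to $\CZ$ inside $\CV$ with $[\CZ,\CH]=0$ (these coordinate lifts lie in $\CV$ because they differ from honest $\mathbf{q}$-related lifts by sections of $\ker\mathbf{q}_*=\ch\CV\subseteq\CV$). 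One is then in the local product situation $(U\times Z,\CH\oplus\CZ)$ exactly as in Proposition~\ref{relativeGoursatProp1}, and --- because $\ch\CV\subseteq\ch\CV^{(j)}$ --- the entire derived flag splits off the $\CZ$-factor: $\CV^{(j)}=\CZ\oplus\CH^{(j)}$ with $[\CZ,\CH^{(j)}]=0$, $\CH^{(j)}\cap\CZ=0$, and $\mathbf{q}_*$ mapping $\CH^{(j)}$ isomorphically onto $\bar{\CV}^{(j)}=(\CV/\ch\CV)^{(j)}$. Hence $m_j=c_0+\dim\CH^{(j)}=c_0+\bar{m}_j$ for all $j$, where $\bar{m}_j:=\dim(\CV/\ch\CV)^{(j)}$; in particular $\CV/\ch\CV$ is again totally regular --- its derived, Cauchy and intersection bundles are the $\mathbf{q}$-images of the corresponding split factors --- so its deceleration is defined.

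The conclusion is then immediate. From $\bar{m}_j=m_j-c_0$ for all $j$ we get $\bar{\D}_j=\bar{m}_j-\bar{m}_{j-1}=m_j-m_{j-1}=\D_j$, so $\CV$ and $\CV/\ch\CV$ have the same derived length $k$ (the families $(m_j)$ and $(\bar{m}_j)$ stabilise at the same index and $\bar{\D}_k=\D_k\geq1$), the second differences $\bar{\D}^2_j=\bar{\D}_j-\bar{\D}_{j-1}=\D^2_j$ agree, and therefore
$$
\mathrm{decel}(\CV/\ch\CV)=\langle-\bar{\D}^2_2,\ldots,-\bar{\D}^2_k,\bar{\D}_k\rangle=\langle-\D^2_2,\ldots,-\D^2_k,\D_k\rangle=\mathrm{decel}(\CV).
$$
The one step that genuinely needs care is the splitting $\CV^{(j)}=\CZ\oplus\CH^{(j)}$ of the full derived flag: one must verify that each $\CH^{(j)}$ can be spanned by vector fields whose coefficients do not depend on $z$, so that the relation $[\CZ,\CH^{(j)}]=0$ propagates up the flag and $\mathbf{q}_*$ stays injective on it. This is precisely where the inclusion $\ch\CV\subseteq\ch\CV^{(j)}$ is used; everything downstream of it is the velocity bookkeeping carried out above.
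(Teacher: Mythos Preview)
Your proof is correct and follows the same strategy as the paper: show that passing to the quotient shifts every $m_j$ by a single constant, then note that $\mathrm{decel}$ depends only on first and second differences of the $m_j$ and is therefore unchanged. The paper simply cites Proposition~\ref{relativeGoursatProp1} for the shift $\bar m_j=m_j-c$ (stated there for the quotient by $\ch\CV^{(1)}_0$), whereas you rerun the local-product argument directly for $\ch\CV$; this is if anything tidier, since integrability of $\ch\CV$ and the key inclusion $\ch\CV\subseteq\ch\CV^{(j)}$ come for free from the Cauchy property and Jacobi.
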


\noindent{\it Proof.} From the previous Proposition, we have $\bar{m}_j=m_j-c$. Since the deceleration of any bundle is a first or second difference of the derived flag bundle ranks, $\bar{m}_j, m_j$, we deduce that the decelerations of $\CV$ and $\CV/\ch\CV^{(1)}_0$ are identical.\hfill\qed


\begin{thm}[Existence of feedback linearizable quotients]\label{quotientCheck}
Let $\CV\subset TM$ be a subundle over smooth manifold $M$ that is invariant under the smooth, regular action of a Lie group $G$ with Lie algebra $\bsy{\G}$ of infinitesimal generators that are transverse to $\CV$ and suppose $\ch\CV^{(1)}_0=\{0\}$. Then 
\begin{enumerate}
\item[1)] The semi-basic 1-forms for the $G$-action satisfy $\ker\bsy{\o}_{\mathrm{sb}}=\CV\oplus\bsy{\G}$
\item[2)] If $(M, \CV\oplus\bsy{\G})$ is a relative Goursat bundle then the quotient $(M/G, \CV/G)$ of $(M, \CV)$ is locally equivalent to a partial prolongation of the contact distribution on  $J^1(\B R,  \B R^q)$. 
\item[3)] If $(M, \CV\oplus\bsy{\G})$ is a relative Goursat bundle then signature $\s=\mathrm{decel}\big(\CV\oplus\bsy{\Gamma}\big)=\mathrm{decel}(\CV/G)$ whence $(M/G, \CV/G)\simeq \mathcal{C}\langle \s\rangle$.
\end{enumerate}
\end{thm}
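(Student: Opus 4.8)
\medskip
\noindent\emph{Proof proposal.} Throughout write $\CW:=\CV\oplus\bsy{\G}$; by transversality this is a subbundle of $TM$, of constant rank $\mathrm{rank}\,\CV+\dim\bsy{\G}$ because the action is regular. Part 1) is a direct computation with annihilators: since $\bsy{\o}=\mathrm{ann}\,\CV$ and the semi-basic $1$-forms are precisely those $\th\in\bsy{\o}$ with $\th(\bsy{\G})=0$, one has $\bsy{\o}_{\mathrm{sb}}=\mathrm{ann}\,\CV\cap\mathrm{ann}\,\bsy{\G}=\mathrm{ann}(\CV+\bsy{\G})=\mathrm{ann}\,\CW$; as $\CW$ is a constant-rank subbundle, annihilating once more gives $\ker\bsy{\o}_{\mathrm{sb}}=\CW=\CV\oplus\bsy{\G}$. (This is the identity invoked in the proof of Lemma~\ref{semiBasicDistribution}.)

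For 2) and 3), the first observation is that $\bsy{\G}\subseteq\ch\CW$: the distribution $\CW$ is $G$-invariant (both $\CV$ and the orbit distribution $\bsy{\G}$ are), so the flow of any fundamental vector field $\xi$ — a one-parameter subgroup of $G$ — preserves $\CW$ fibrewise, i.e.\ $\mathcal{L}_\xi\CW\subseteq\CW$, which says $\xi\in\ch\CW$; the same argument gives $[\bsy{\G},\CV]\subseteq\CV$. A Jacobi-identity induction upgrades this to $\bsy{\G}\subseteq\ch\CW^{(j)}$ and $\bsy{\G}\subseteq\ch\CW^{(j)}_{j-1}$ for all $j$, so in particular $\bsy{\G}\subseteq\ch\CW^{(1)}_0$ and $\CW$ is projectable along the quotient map $\mathbf{q}:M\to M/G$. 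By Lemma~\ref{semiBasicDistribution} (which rests on part 1)) one has $\mathbf{q}_*\CV=\CV/G$, and since $\bsy{\G}=\ker\mathbf{q}_*\subseteq\CW$ also $\mathbf{q}_*\CW=\mathbf{q}_*\CV=\CV/G$; being $\mathbf{q}$-projectable, $\CW$ has the property that $\mathbf{q}_*$ intertwines its derived flag and Cauchy bundles with those of $\CV/G$, each rank dropping by $\dim\bsy{\G}$.

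The decisive step is the reverse inclusion $\ch\CW^{(1)}_0\subseteq\bsy{\G}$, and this is exactly where the hypothesis $\ch\CV^{(1)}_0=\{0\}$ is used. From $\bsy{\G}\subseteq\ch\CW^{(1)}_0\subseteq\CW=\CV\oplus\bsy{\G}$ one gets the splitting $\ch\CW^{(1)}_0=\bigl(\ch\CW^{(1)}_0\cap\CV\bigr)\oplus\bsy{\G}$, so it suffices to show $\ch\CW^{(1)}_0\cap\CV=\{0\}$. The plan is to prove that a vector field $V\in\CV$ with $[V,\CW^{(1)}]\subseteq\CW^{(1)}$ necessarily satisfies $[V,\CV^{(1)}]\subseteq\CV^{(1)}$, so that $V\in\CV\cap\ch\CV^{(1)}=\ch\CV^{(1)}_0=\{0\}$; for this one uses $\CW^{(1)}=\CV^{(1)}+\bsy{\G}$ (a consequence of $[\CV,\bsy{\G}]\subseteq\CV$ and $[\bsy{\G},\bsy{\G}]\subseteq\bsy{\G}$) and works in a local chart straightening the orbit foliation, in which $\CW$ has a frame independent of the fibre coordinates, so that the inclusion $[V,\CV^{(1)}]\subseteq\CV^{(1)}+\bsy{\G}$ can be projected modulo $\bsy{\G}$ onto $\CV^{(1)}$. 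I expect this to be the main obstacle: controlling the component of $\ch\CW^{(1)}$ transverse to $\bsy{\G}$, the subtlety being that $\CV$ itself — unlike $\CW$ — is not $\mathbf{q}$-projectable, so the projection must be carried out directly rather than quoted from a functorial statement about quotients.

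Granting $\ch\CW^{(1)}_0=\bsy{\G}$, the remainder is assembly. By hypothesis $\CW$ is a relative Goursat bundle, so Proposition~\ref{relativeGoursatProp1}, applied with $c=\dim\bsy{\G}$, says that $\CW/\ch\CW^{(1)}_0=\CW/\bsy{\G}=\mathbf{q}_*\CW=\CV/G$ is a Goursat bundle, its type numbers being those of $\CW$ uniformly decreased by $\dim\bsy{\G}$; in particular $\CV/G$ has the refined derived type of a partial prolongation and satisfies the integrability and Weber-structure conditions of Definition~\ref{GoursatBundle}, these descending from $\CW$ under $\mathbf{q}_*$. The generalised Goursat normal form, Theorem~\ref{GGNF}, then shows $(M/G,\CV/G)$ is locally equivalent to $\mathcal{C}\langle\s\rangle$ with $\s=\mathrm{decel}(\CV/G)$, which is 2). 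Finally, since all derived ranks of $\CW$ and of $\CV/G$ differ by the fixed constant $\dim\bsy{\G}$, their velocities and hence their decelerations coincide — this is Proposition~\ref{relativeGoursatProp2} — so $\s=\mathrm{decel}(\CV/G)=\mathrm{decel}(\CV\oplus\bsy{\G})$ and $(M/G,\CV/G)\simeq\mathcal{C}\langle\s\rangle$, giving 3).
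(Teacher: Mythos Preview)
Your overall architecture matches the paper's: establish $\ker\bsy{\o}_{\mathrm{sb}}=\CV\oplus\bsy{\G}$, identify the Cauchy characteristics of $\CW=\CV\oplus\bsy{\G}$ with $\bsy{\G}$, then invoke Proposition~\ref{relativeGoursatProp1}, Theorem~\ref{GGNF} and Proposition~\ref{relativeGoursatProp2}. For part 1) your direct annihilator computation $\bsy{\o}_{\mathrm{sb}}=\mathrm{ann}\,\CV\cap\mathrm{ann}\,\bsy{\G}=\mathrm{ann}\,\CW$ is cleaner than the paper's route, which passes through the auxiliary identity $\mathbf{q}^*\bsy{\bar{\o}}=\bsy{\o}_{\mathrm{sb}}$ (Lemma~\ref{littleLemma}) and a dimension count using $\dim\bsy{\bar{\o}}=\dim\bsy{\o}-\dim\bsy{\G}$.

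However, your ``decisive step'' contains a genuine error. You aim to prove $\ch\CW^{(1)}_0=\bsy{\G}$ and then feed $c=\dim\bsy{\G}$ into Proposition~\ref{relativeGoursatProp1}. But $\ch\CW^{(1)}_0$ is strictly larger than $\bsy{\G}$ whenever $\dim\CV>1$: if $\CW$ is a relative Goursat bundle then Proposition~\ref{derivedtype} forces $\chi^1_0(\CW)=m_0(\CW)-1=\dim\CV+\dim\bsy{\G}-1$, so $\dim\ch\CW^{(1)}_0>\dim\bsy{\G}$. Concretely, in the second ship model one has $\ch\widehat{\CV}^{(1)}_0=\bsy{\G}\oplus\{\P{u_1},\P{u_2}\}$, of rank $4$, while $\dim\bsy{\G}=2$. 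Consequently your proposed argument for the reverse inclusion $\ch\CW^{(1)}_0\cap\CV=\{0\}$ cannot succeed --- in that example the intersection is $\{\P{u_1},\P{u_2}\}$ --- and Proposition~\ref{relativeGoursatProp1} applied literally would produce $\CW/\ch\CW^{(1)}_0$, which is \emph{not} $\CV/G$.

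The paper instead asserts $\ch\CW=\bsy{\G}$ (Cauchy bundle of $\CW$ itself, not of $\CW^{(1)}$ intersected with $\CW$). This is the right target: the inclusion $\bsy{\G}\subseteq\ch\CW$ is your invariance argument, while the hypothesis $\ch\CV^{(1)}_0=\{0\}$ (hence $\ch\CV=\{0\}$) is what kills the $\CV$-component of any element of $\ch\CW$. One then observes that the local-trivialization argument in the \emph{proof} of Proposition~\ref{relativeGoursatProp1} goes through verbatim with the integrable bundle $\bsy{\G}=\ch\CW$ in place of $\ch\CW^{(1)}_0$, since all that proof uses is that the bundle being quotiented out is Frobenius and lies in every $\ch\CW^{(j)}$; this yields the uniform shift of type numbers by $\dim\bsy{\G}$, so $\CW/\bsy{\G}=\CV/G$ has $\bar\chi^0=0$ and is a genuine Goursat bundle. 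Your concluding two paragraphs are then correct as written.
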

\vskip 4 pt
\noindent {\it Proof.} Let $\bsy{\o}=\text{ann}\,\CV$, $\mathbf{q}:M\to M/G$ the quotient map and $\bsy{\bar{\o}}=\bsy{\o}/G$ the quotient of $\bsy{\o}$ by the action of $G$. Recall that 
$
\mathbf{q}^*\bsy{\bar{\o}}\subset\bsy{\o};  
$
in particular $\CV\subset \ker\mathbf{q}^*\bsy{\bar{\o}}$.
We also have $\bsy{\G}\subset\ker\mathbf{q}^*\bsy{\bar{\o}}$ since $\left(\mathbf{q}^*\bsy{\bar{\o}}\right)(v)=\bsy{\bar{\o}}(\mathbf{q}_*v)=\bsy{\bar{\o}}(0)=0$ for all $v\in\bsy{\G}$.
Hence $\CV\oplus\bsy{\G}\subseteq\ker\mathbf{q}^*\bsy{\bar{\o}}$.  
\vskip 3 pt
\begin{lem}\label{littleLemma} 
$\mathbf{q}^*\bsy{\bar{\o}}=\bsy{\o}_\mathrm{sb}$.  
\end{lem}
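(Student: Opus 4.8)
**Proof plan for Lemma \ref{littleLemma}: $\mathbf{q}^*\bsy{\bar{\o}}=\bsy{\o}_{\mathrm{sb}}$.**

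The plan is to prove the two inclusions separately. For $\mathbf{q}^*\bsy{\bar{\o}}\subseteq\bsy{\o}_{\mathrm{sb}}$: every pullback $\mathbf{q}^*\bar{\th}$ with $\bar{\th}\in\bsy{\bar{\o}}$ already lies in $\bsy{\o}$ by Definition \ref{QuotientDef}, so I only need to verify that such forms annihilate $\bsy{\G}$. But this is exactly the computation already displayed just above the lemma statement: for $v\in\bsy{\G}$ we have $(\mathbf{q}^*\bar{\th})(v)=\bar{\th}(\mathbf{q}_*v)=\bar{\th}(0)=0$, since the infinitesimal generators of the $G$-action are $\mathbf{q}$-vertical. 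Hence $\mathbf{q}^*\bar{\th}$ is a semi-basic $1$-form lying in $\bsy{\o}$, i.e.\ an element of $\bsy{\o}_{\mathrm{sb}}$.

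For the reverse inclusion $\bsy{\o}_{\mathrm{sb}}\subseteq\mathbf{q}^*\bsy{\bar{\o}}$, I would take an arbitrary $\th\in\bsy{\o}_{\mathrm{sb}}$ and show it descends to a well-defined $1$-form $\bar{\th}$ on $M/G$ with $\mathbf{q}^*\bar{\th}=\th$, and that this $\bar{\th}$ lies in $\bsy{\bar{\o}}$. Well-definedness of the descent is the standard criterion for a basic form: a semi-basic form that is moreover $G$-invariant is the pullback of a unique form downstairs. Here $G$-invariance of $\bsy{\o}$ (hence, after taking the semi-basic part, of $\bsy{\o}_{\mathrm{sb}}$, since the $G$-action commutes with the condition $\th(\bsy{\G})=0$) together with $\th(\bsy{\G})=0$ gives, via the Lie-derivative/Cartan-formula argument, that $\mathcal{L}_\xi\th$ and $\iota_\xi\th$ both vanish for every $\xi\in\bsy{\G}$; by regularity of the action this is precisely what is needed to produce $\bar{\th}\in\O^1(M/G)$ with $\mathbf{q}^*\bar{\th}=\th$. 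It remains to see $\bar{\th}\in\bsy{\bar{\o}}$: but $\mathbf{q}^*\bar{\th}=\th\in\bsy{\o}$, so by the maximality in Definition \ref{QuotientDef} the form $\bar{\th}$ belongs to the quotient Pfaffian system $\bsy{\bar{\o}}$. Combining the two inclusions yields the claimed equality, and since by hypothesis $\ch\CV^{(1)}_0=\{0\}$ the cross-section machinery of \cite{AF} guarantees $\bsy{\o}_{\mathrm{sb}}$ is itself a Pfaffian system, so nothing is lost by staying in the Pfaffian category.

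The step I expect to be the main obstacle is the descent argument: carefully checking that a semi-basic, $G$-invariant $1$-form on $M$ is genuinely the pullback of a $1$-form on the quotient. One must use regularity of the $G$-action (so that $M/G$ is a manifold and $\mathbf{q}$ a submersion with connected fibres equal to the $G$-orbits) to conclude that pointwise-well-defined data assemble into a smooth global form; with a non-connected group one also needs $G$-invariance and not merely $\bsy{\G}$-invariance, which is why the lemma is stated for the $G$-action rather than just the infinitesimal algebra. This is the only place where more than formal manipulation is required; the two one-line verifications ($\mathbf{q}^*\bar{\th}\in\bsy{\o}$ and $\bar{\th}(\mathbf{q}_*v)=0$) are immediate from the definitions.
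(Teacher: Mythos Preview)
Your forward inclusion $\mathbf{q}^*\bsy{\bar{\o}}\subseteq\bsy{\o}_{\mathrm{sb}}$ is fine and matches the paper. The gap is precisely where you flagged it, but your proposed resolution is incorrect. You assert that for an individual $\th\in\bsy{\o}_{\mathrm{sb}}$ one has $\mathcal{L}_\xi\th=0$ for every $\xi\in\bsy{\G}$, deduced ``via the Lie-derivative/Cartan-formula argument''. The Cartan formula gives $\mathcal{L}_\xi\th=\iota_\xi d\th+d\iota_\xi\th=\iota_\xi d\th$, and there is no reason for $\iota_\xi d\th$ to vanish: semi-basicness of $\th$ says nothing about $d\th$. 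What $G$-invariance of $\bsy{\o}$ (hence of $\bsy{\o}_{\mathrm{sb}}$) gives you is only $\mathcal{L}_\xi\th\in\bsy{\o}_{\mathrm{sb}}$, not $\mathcal{L}_\xi\th=0$. A trivial counterexample: $M=\mathbb{R}^2$, $G=\mathbb{R}$ acting by $x$-translation, $\bsy{\o}=\{dy\}$; then $f(x,y)\,dy\in\bsy{\o}_{\mathrm{sb}}$ but $\mathcal{L}_{\partial_x}(f\,dy)=f_x\,dy\neq 0$ in general, so $f\,dy$ is not the $\mathbf{q}$-pullback of any form on $M/G$. Thus your descent of a \emph{single} section fails; only the sub-bundle descends.

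The paper avoids this issue entirely by a rank count: having established $\mathbf{q}^*\bsy{\bar{\o}}\subseteq\bsy{\o}_{\mathrm{sb}}$, it notes that pullback by the surjective submersion $\mathbf{q}$ preserves rank, and invokes \cite{AF}, Theorem~5.1 (which uses the transversality hypothesis of Theorem~\ref{quotientCheck}) to get $\dim\bsy{\bar{\o}}=\dim\bsy{\o}-\dim\bsy{\G}=\dim\bsy{\o}_{\mathrm{sb}}$. Equal ranks plus inclusion gives equality of sub-bundles. If you want to salvage your descent approach, you must work at the level of the sub-bundle rather than a single section: use freeness and regularity of the action to produce a local $G$-invariant frame for $\bsy{\o}_{\mathrm{sb}}$ (each frame element then genuinely descends), and conclude that $\bsy{\o}_{\mathrm{sb}}$ is spanned over $C^\infty(M)$ by pullbacks from $\bsy{\bar{\o}}$. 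That is essentially the dimension argument in disguise.
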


\noindent{\it Proof of Lemma \ref{littleLemma}.} 
Since $(\mathbf{q}^*\bsy{\bar{\o}})(\bsy{\G})=0$ we have $\mathbf{q}^*\bsy{\bar{\o}}\subseteq \bsy{\o}_\mathrm{sb}$. We invoke the following elementary fact. If 
$f:M\to N$ is a smooth surjective submersion, with $\dim N=n$ and $\bsy{\Psi}\subset T^*N$ is a rank $k\leq n$ sub-bundle, then $f^*\bsy{\Psi}\subset T^*M$ has rank $k$. 
Now, from \cite{AF}, Theorem 5.1, we have that $\dim\bsy{\bar{\o}}=\dim\bsy{\o}-\dim\bsy{\G}=\dim\bsy{\o}_\mathrm{sb}$ and granting the elementary fact Lemma \ref{littleLemma} is proven. \hfill $\blacksquare$

\vskip 3 pt
Thus, 
$
\dim\ker\mathbf{q}^*\bsy{\bar{\o}}=\dim M-\dim\mathbf{q}^*\bsy{\bar{\o}}=\dim M-\dim\bsy{\o}_\mathrm{sb}=\dim\CV+\dim\bsy{\G}=\dim\left(\CV\oplus\bsy{\G}\right).
$
We have therefore proven that
$
\ker\mathbf{q}^*\bsy{\bar{\o}}=\ker\bsy{\o}_\mathrm{sb}=\CV\oplus\bsy{\G}, 
$
which is item $1)$. 
Now suppose that $\CV\oplus\bsy{\G}$ is a relative Goursat bundle. We have $\ch\left(\CV\oplus\bsy{\G}\right)=\bsy{\G}$ and hence by application of Proposition \ref{relativeGoursatProp1}, 
$$
\mathbf{q}_*(\CV\oplus\bsy{\G})=\mathbf{q}_*\CV
$$
has the refined derived type of a partial prolongation and hence is a relative Goursat bundle, proving item $2)$. By Theorem \ref{GGNF} and Proposition \ref{relativeGoursatProp2}, the quotient 
$\mathbf{q}_*\CV/\ch\left(\mathbf{q}_*\CV\right)$ is locally equivalent to $\CC\langle\s\rangle$ where $\s=\text{decel}(\CV\oplus\bsy{\G})$, proving item $3)$.\hfill\qed

\begin{rem}
Theorem \ref{quotientCheck} asserts that the existence of a static feedback linearizable quotient can be checked algorithmically from the refined derived type of 
$\CV\oplus\bsy{\G}$: the kernel of the semi-basic 1-forms. In particular, explicit construction of the quotient $\mathbf{q}_*\CV$ is unnecessary. Ordinarily integration is required if the action is not known or else only known infinitesimally. 
\end{rem}

\subsection{Control morphisms and linearizable quotients} We investigate the extent to which the quotient of a control system $(M, \CV)$ by its Lie symmetry group $G$ is also a control system on the quotient $M/G$. 
Ultimately, this leads to the following.

\begin{defn}\label{admissibleSymmetries}
Let $\mu:G\times M\to M$ be a Lie transformation group with Lie algebra $\bsy{\G}$ of infinitesimal generators leaving control system  (\ref{abstractControlSystem}) invariant and acting 
regularly and freely on $M$. We say that $G$ is a {\it control admissible} or simply {\it admissible} symmetry group if the function $t$ is invariant:  $\mu_g^*t=t$ for all $g\in G$ and the rank of the 
distribution $\pi_*\bsy{\G}$ is equal to  $\dim G$, where $\pi$ is the projection $\pi: M\to \B R\times X$, satisfying $\pi(t,\bsy{x},\bsy{u})=(t, \bsy{x})$. 
\end{defn}

If $(M, \CV)$ is the distribution associated to control system (\ref{abstractControlSystem}) then locally there are submanifolds $\mathrm{X}(M)$, the submanifold of states and $\mathrm{U}(M)$, the submanifold of controls such that $M=\B R\times\mathrm{X}(M)\times \mathrm{U}(M)$, where the factor $\B R$ is the time coordinate space. We will show that if $G$ is an admissible transformation group acting on $M=\B R\times \mathrm{X}(M)\times \mathrm{U}(M)$ then its elements are extended static feedback transformations.
\begin{thm}\label{WhenQuotientIsControl}
Let $\mu:G\times M\to M$ be an admissible Lie transformation group acting smoothly, regularly and freely on $M$ and leaving invariant the control system  $(M,\,\CV)$ defined by (\ref{abstractControlSystem}). 
Suppose $\dim G<\dim\mathrm{X}(M)$. Then locally the quotient $(M/G, \CV/G)$ is a control system in which $\dim\mathrm{X}(M/G)=\dim\mathrm{X}(M)-\dim G$ and $\dim\mathrm{U}(M/G)=\dim\mathrm{U}(M)$.
\end{thm}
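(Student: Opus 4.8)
The plan is to show that the hypotheses of Definition \ref{admissibleSymmetries} force the action of $G$ to be, in suitable coordinates, an extended static feedback transformation, and then to read off the stated dimension count for the quotient from this structure. The starting point is that $t$ is $G$-invariant, so every $\mu_g$ descends to a diffeomorphism of $\B R\times\mathrm X(M)\times\mathrm U(M)$ fixing the $\B R$-factor pointwise; equivalently, the infinitesimal generators in $\bsy{\G}$ all annihilate $dt$, i.e. they have no $\P t$-component. The second admissibility condition, $\mathrm{rank}\,\pi_*\bsy{\G}=\dim G$, says that the projection $\pi:M\to\B R\times\mathrm X(M)$ is injective on $\bsy{\G}$ fibrewise, so $\bsy{\G}\cap\ker\pi_*=\{0\}$; since $\ker\pi_*$ is spanned by the control directions $\P{\bsy u}$, this means no nonzero element of $\bsy{\G}$ is purely vertical over $\B R\times\mathrm X(M)$. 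Combined with freeness and regularity, this lets us introduce $G$-invariant coordinates of the form $(t,\bar{\bsy x})$ on (a local model of) $(\B R\times\mathrm X(M))/G$ together with coordinates $\bsy g$ along the orbits, all independent of the controls.

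\textbf{Key steps.} First I would invoke the regular, free action hypothesis to pass to a local trivialization in which $M\cong(\B R\times\mathrm X(M)/G)\times G\times\mathrm U(M)$, using that $G$ acts only on the $\B R\times\mathrm X(M)$ factor by the first admissibility condition and on $G$ by (left) translation. Here $\dim(\mathrm X(M)/G)=\dim\mathrm X(M)-\dim G$, which is nonnegative and in fact positive by the assumption $\dim G<\dim\mathrm X(M)$; this is where that hypothesis is consumed, guaranteeing a genuine nonempty state space remains after reduction. Second, I would express the control distribution: since $(M,\CV)$ is the control system $\{\P t+\bsy f(t,\bsy x,\bsy u)\P{\bsy x},\ \P{\bsy u}\}$ and $\CV$ is $G$-invariant with $\bsy{\G}$ transverse to $\CV$ (transversality follows from freeness as in Theorem \ref{mainThm}'s setup — note $\bsy{\G}\cap\CV=\{0\}$ because a nonzero element of $\bsy{\G}$ is neither purely vertical nor has a $\P t$-component, whereas $\CV$'s only non-$\P{\bsy u}$ generator does have a $\P t$-component), the quotient distribution $\mathbf q_*\CV$ is well-defined and has rank equal to $\mathrm{rank}\,\CV$, with $\ker\bsy{\bar\o}=\mathbf q_*\CV$ by Lemma \ref{semiBasicDistribution}. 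Third, I would identify the pushforward: $\P{\bsy u}$ descends to $q$ linearly independent vertical vector fields $\P{\bar{\bsy u}}$ on $\mathrm U(M/G):=\mathrm U(M)$ (the controls are untouched by $G$), and the chain $\P t+\bsy f\P{\bsy x}$ descends to a vector field of the form $\P t+\bar{\bsy f}(t,\bar{\bsy x},\bar{\bsy u})\P{\bar{\bsy x}}$ after re-expressing $\bsy f$ in the invariant coordinates — the $\P t$-coefficient survives as $1$ because $t$ is a common coordinate downstairs and upstairs. Hence $\mathbf q_*\CV=\{\P t+\bar{\bsy f}\P{\bar{\bsy x}},\ \P{\bar{\bsy u}}\}$, which is by definition a control system with the asserted dimensions $\dim\mathrm X(M/G)=\dim\mathrm X(M)-\dim G$, $\dim\mathrm U(M/G)=\dim\mathrm U(M)$.

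\textbf{Main obstacle.} The delicate point is the third step: one must check that, after quotienting, the descended ``drift + controls'' vector field genuinely retains the normal form of a control system, i.e. that $t$ still serves as a legitimate independent variable downstairs and that $\bsy f$, when restricted to a cross-section and pushed down, depends only on $(t,\bar{\bsy x},\bar{\bsy u})$ and not spuriously on orbit parameters. This is where $G$-invariance of $\CV$ is used essentially — it guarantees $\mathcal L_{\bsy\xi}$ of the generating vector fields of $\CV$ lies back in $\CV$ for each $\bsy\xi\in\bsy{\G}$, which upon choosing a cross-section $\s$ (provided by Theorem \ref{mainThm}(1), since the action is free and transverse) means $\s^*(\P t+\bsy f\P{\bsy x})$ has coefficients depending only on the transverse coordinates, i.e. on $(t,\bar{\bsy x},\bsy u)$. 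A secondary, more technical nuisance is ensuring the local splitting $M\cong(\B R\times\mathrm X(M))/G\times G\times\mathrm U(M)$ is compatible with the product structure $\B R\times\mathrm X\times\mathrm U$ — one needs the $G$-orbits to lie inside the ``$t=\text{const}$, $\bsy u=\text{const}$'' slices, which is exactly what the two clauses of admissibility (invariance of $t$; $\mathrm{rank}\,\pi_*\bsy{\G}=\dim G$ forcing $\bsy{\G}$ off the control directions) deliver. Everything else is bookkeeping on dimensions.
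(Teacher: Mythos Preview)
There is a genuine gap. Your argument rests on the claim that ``$G$ acts only on the $\B R\times\mathrm X(M)$ factor'' and that ``the controls are untouched by $G$'', and you trace this to the admissibility condition $\mathrm{rank}\,\pi_*\bsy{\G}=\dim G$. But that condition only says $\pi_*$ is \emph{injective} on $\bsy{\G}$, i.e.\ no nonzero element of $\bsy{\G}$ lies entirely in $\{\P{\bsy u}\}$; it does \emph{not} say that elements of $\bsy{\G}$ have zero $\P{\bsy u}$-component. Indeed, the ship symmetry $X_4=e^{-t}(\P x-\sin\th\,\P z+\cos\th\,\P{u_1})$ in (\ref{shipSyms}) is an admissible symmetry with a nontrivial $\P{u_1}$-part, and the orbits of $\bsy{\G}=\{X_4,X_8\}$ certainly do not lie in the slices $\bsy u=\text{const}$. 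So your proposed local trivialization $M\cong(\B R\times\mathrm X(M)/G)\times G\times\mathrm U(M)$ with $G$ acting only on the middle factor does not exist in general, and the identification $\mathrm U(M/G)=\mathrm U(M)$ is unjustified.

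What is actually needed---and what the paper's proof supplies---is the intermediate fact that an admissible infinitesimal symmetry $v$ has the form $v=\bsy{\xi}(t,\bsy x)\P{\bsy x}+\bsy{\eta}(t,\bsy x,\bsy u)\P{\bsy u}$ with $\bsy{\xi}$ \emph{independent of} $\bsy u$. This uses the $G$-invariance of $\CV$ in an essential way: from $v(t)=0$ and $[v,\CV]\subseteq\CV$ one first deduces that $\{\P{\bsy u}\}$ is itself preserved by $v$, and then $[\P{u_a},v]\in\{\P{\bsy u}\}$ forces $\partial\xi^i/\partial u_a=0$. Only with this in hand can one conclude that the group action is an extended static feedback transformation $\mu_g(t,\bsy x,\bsy u)=(t,\bsy a(t,\bsy x,g),\bsy b(t,\bsy x,\bsy u,g))$, and then the rank condition on $\pi_*\bsy{\G}$ is used (via the implicit function theorem / moving frames) to produce $n-r$ invariants depending only on $(t,\bsy x)$ together with $q$ invariants depending on $(t,\bsy x,\bsy u)$. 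The new controls on $M/G$ are these latter invariants $v_a(t,\bsy x,\bsy u)$, not the original $u_a$. Your proposal skips this whole chain, and without it the structure of the quotient as a control system---and the dimension formula for $\mathrm U(M/G)$---is not established.
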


\begin{proof}
The distribution $\CV$ has the form
$
\CV=\left\{\P t+\bsy{f}(t, \bsy{x}, \bsy{u})\P {\bsy{x}},\ \P {\bsy{u}}\right\} 
$
and any admissible symmetry of $\CV$ must preserve the subdistribution $\{\P {\bsy{u}}\}$. For if $v\in\bsy{\G}$ is admissible then $v(t)=0$ and hence $[v,\P {u_a}](t)=0$. Since $v$ is an infinitesimal symmetry, 
$[v,\P {u_a}]=\a T+\b_a\P {u_a}$,
for some functions $\a, \b_a$, where $T=\P t+\bsy{f}(t, \bsy{x}, \bsy{u})\P {\bsy{x}}$. We deduce that $\a=0$. 

Next if $v=\xi^i\P {x_i}+\eta^a\P {u_a}$ is an infinitesimal generator of an admissible symmetry of $\CV$ then
$
\ds \{\P {\bsy{u}}\}\ni [\P {u_a}, v]=\frac{\partial\xi^i}{\partial u_a}\P {x_i}+\frac{\partial\eta^b}{\partial u_a}\P {u_b}
$
which implies that $\ds \frac{\partial\xi^i}{\partial u_a}=0$. Hence, the corresponding infinitesimal generators of such an {\it admissible} symmetry group consists of vector fields of the form
$
\ds v=\bsy{\xi}(t,\bsy{x})\P {\bsy{x}}+\bsy{\eta}(t, \bsy{x}, \bsy{u})\P {\bsy{u}}.
$
If $\mu:G\times M\to M$ is the Lie transformation group with infinitesimal generators of the form $v$ then $\mu$ must have the general local form
$$
\mu_g(t, \bsy{x}, \bsy{u})=\left(t, \bsy{a}(t,\bsy{x},g),\ \bsy{b}(t, \bsy{x}, \bsy{u},g)\right)=(\bar{t},\ \bsy{\bar{x}},\ \bsy{\bar{u}})
$$
Recall that $r=\dim G<\dim\mathrm{X}(M)=n$. The action being admissible means that $\pi_*\bsy{\G}$ is a rank $r$ sub-bundle of $TM$, $r=\dim G$. If $\bsy{\G}=\{X_1,\ldots,X_r\}$, then
$$
\pi_*{X_i}={\frac{\partial a_\ell}{\partial g_i}{\P {x_\ell}}_\big|}_{g=\text{id}},\ \ 1\leq i\leq r.
$$ 
Since the $\pi_*X_i$ span a rank $r$ sub-bundle there is a subset $a_{i_1},\ a_{i_2},\ \ldots,\ a_{i_r}$ such that
$$
\text{det}\,{\frac{\partial  (a_{i_1},\ a_{i_2},\ \ldots,\ a_{i_r})}{\partial (g_1,\ \, g_2,\ \, \ldots,\ \, g_r)}_\big|}_{g=\text{id}}\neq 0.
$$
By the implicit function theorem, in a neighborhood of a point $(p, \text{id})\in \B R\times X\times G$ there are functions $g_j=\g_j(t,\bsy{x})$ such that $a_{i_s}(t,\bsy{x}, \bsy{\g}(t,\bsy{x}))=c_s$, 
 $1\leq s\leq r$ for some constants $c_s$. By the theory of equivariant moving frames \cite{FelsOlver}, 
the $n-r$ non-constant functions that remain among the components of
$ \bsy{a}(t, \bsy{x}, \bsy{\g}(t,\bsy{x}))$ and the $q$ non-constant functions $\bsy{b}(t,\bsy{x}, \bsy{u}, \bsy{\g}(t, \bsy{x}))$ together with $t$ span the $n+q+1-r$ invariants of the $G$-action. 
Setting $y_\ell$, $1\leq \ell\leq n-r$ equal to the non-constant functions among the $ \bsy{a}(t, \bsy{x}, \bsy{\g}(t,\bsy{x}))$ and $v_a$, \ $1\leq a\leq q$ equal to the functions $\bsy{b}(t,\bsy{x}, \bsy{u}, \bsy{\g}(t, \bsy{x}))$ produces the quotient map
$\mathbf{q}:M \to M/G$ in which local coordinates on $M/G$ have the form
\begin{equation}\label{quotientCoords}
t,\ \ y_\ell=y_\ell(t,\bsy{x}),\ \ v_a=v_a(t, \bsy{x}, \bsy{u}),\ \ 1\leq i\leq n-r,\ \ 1\leq a\leq q,
\end{equation}
and are components of the quotient map, $\mathbf{q}$. It follows that the quotient $\mathbf{q}_*\CV$ has the local form of control system
\begin{equation}\label{quotientControlSys}
\mathbf{q}_*\CV=\Big\{\P t+\sum_{\ell=1}^{n-r}\bar{f}^\ell(t, \bsy{y}, \bsy{v})\P {y_\ell},\ \P {v_1},\ \ldots, \P {v_q}\Big\}
\end{equation} 
for some functions $\bar{f}^\ell$, with the claimed dimensions of $\mathrm{X}(M/G)$ and $\mathrm{U}(M/G)$. 
\end{proof}

\begin{defn}
If $\mathbf{q}: M\to M/G$ is such that $\mathbf{q}_*\CV$ is a control system then we will say that $\mathbf{q}$ is a {\it control morphism}. 
\end{defn}

Not only do we wish to know when a symmetry group induces a control morphism $\mathbf{q}$ but also when $\mathbf{q}_*\CV$ is locally equivalent to a Brunovsky normal form by an (extended) static feedback transformation, directly from knowledge of the Lie algebra $\bsy{\G}$.

\begin{thm}\label{StatLinQuotients}
Let $(M, \CV=\{\P t+\bsy{f}(t,\bsy{x},\bsy{u})\P {\bsy{x}},\ \P {\bsy{u}}\})$ be a control system defining a totally regular sub-bundle of $TM,\ \dim M=n+q+1$ invariant under the free, regular and admissible action of Lie group $G$ on $M$ with Lie algebra $\bsy{\G}$. Suppose $(M, \CV\oplus\bsy{\G})$ is a relative Goursat bundle of derived length $k>1$ in which $\ch\CV^{(1)}_0=\CV\cap\bsy{\G}=\{0\}$. Then $\mathbf{q}:M\to M/G$ is 
a control morphism  and $\mathbf{q}_*\CV$ is locally equivalent to a partial prolongation $\mathcal{C}(\s)$ via local diffeomorphisms $\varphi: M/G\to J^\s(\B R, \B R^q)$,  
$\varphi_*\mathbf{q}_*\CV=\mathcal{C}(\s)$. A local equivalence $\varphi$ identifying $\mathbf{q}_*\CV$ and $\CC\langle \s\rangle$ can be chosen to be an (extended) static feedback transformation if and only if 
\begin{enumerate}
\item[1)] $\{\P {\bsy{u}}\}\subseteq\ch\widehat{\CV}^{(1)}_0$
\item[2)] $dt\in\mathrm{ ann}\,\ch\widehat{\CV}^{(k-1)}$  
\end{enumerate}
where $\widehat{\CV}=\CV\oplus\bsy{\G}$.
\end{thm}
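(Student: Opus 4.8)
The plan is to combine the two previous results—Theorem \ref{quotientCheck} (existence of a static feedback linearizable quotient from the refined derived type of $\widehat{\CV}=\CV\oplus\bsy{\G}$) and Theorem \ref{StatFeedbackLin} (the ESFT criterion on a Goursat bundle)—by bridging them through Theorem \ref{WhenQuotientIsControl}, which already guarantees that under the admissibility hypotheses $\mathbf{q}:M\to M/G$ is a control morphism with $\mathbf{q}_*\CV$ in control form \eqref{quotientControlSys}. The key observation that makes the bridge work is Proposition \ref{relativeGoursatProp1}: since $\ch\widehat{\CV}^{(1)}_0=\CV\cap\bsy{\G}=\{0\}$ and $\ch\widehat{\CV}=\bsy{\G}$ (because $\CV\oplus\bsy{\G}$ is a relative Goursat bundle with the $G$-action free and transverse, exactly as invoked in the proof of Theorem \ref{quotientCheck}), the quotient $\mathbf{q}_*\CV=\widehat{\CV}/\ch\widehat{\CV}$ is a genuine Goursat bundle with $\mathrm{decel}(\mathbf{q}_*\CV)=\mathrm{decel}(\widehat{\CV})=\s$. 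So $\mathbf{q}_*\CV$ satisfies the hypotheses of Theorem \ref{StatFeedbackLin}, and it remains only to translate the two conditions of that theorem, which are stated in terms of the Cauchy bundles of the quotient distribution $\mathbf{q}_*\CV$ downstairs, into the stated conditions $1)$ and $2)$, which are phrased upstairs in terms of $\ch\widehat{\CV}^{(1)}_0$ and $\ch\widehat{\CV}^{(k-1)}$.

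First I would record that, because $(M,\CV)$ is in control form and $G$ is admissible, the coordinates \eqref{quotientCoords} on $M/G$ have the shape $t,\ y_\ell(t,\bsy{x}),\ v_a(t,\bsy{x},\bsy{u})$; in particular $\mathbf{q}^*(dt)=dt$ and the control subdistribution pushes forward correctly, $\mathbf{q}_*\{\P{\bsy{u}}\}=\{\P{v_1},\dots,\P{v_q}\}$ (up to the fibre of $\mathbf{q}_*$, i.e.\ modulo $\bsy{\G}$), using the fact established in the proof of Theorem \ref{WhenQuotientIsControl} that admissible generators have the form $v=\bsy{\xi}(t,\bsy{x})\P{\bsy{x}}+\bsy{\eta}(t,\bsy{x},\bsy{u})\P{\bsy{u}}$ and preserve $\{\P{\bsy{u}}\}$. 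Next I would use the general functorial fact—implicit already in Propositions \ref{relativeGoursatProp1}--\ref{relativeGoursatProp2} via the local trivialization $(U\times Z,\ \CH\oplus\CZ)$ with $[\CH^{(i)},\CZ^{(i)}]=0$—that Cauchy bundles commute with the quotient by an integrable Cauchy subdistribution: $\mathbf{q}_*\big(\ch\widehat{\CV}^{(j)}\big)=\ch\big(\mathbf{q}_*\widehat{\CV}\big)^{(j)}=\ch(\mathbf{q}_*\CV)^{(j)}$, and likewise $\mathbf{q}_*\big(\ch\widehat{\CV}^{(1)}_0\big)=\ch(\mathbf{q}_*\CV)^{(1)}_0$. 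Here one uses $\widehat{\CV}^{(j)}=\CH^{(j)}\oplus\CZ$ with $\CZ=\t_*\bsy{\G}$ integrable and bracket-commuting with $\CH$, exactly the decomposition \eqref{quotientDecomp}--\eqref{lemEq1}.

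With these identifications in hand the proof is a short translation. Condition $1)$ of Theorem \ref{StatFeedbackLin} applied to $\mathbf{q}_*\CV$ reads $\{\P{v_a}\}\subseteq\ch(\mathbf{q}_*\CV)^{(1)}_0=\mathbf{q}_*\big(\ch\widehat{\CV}^{(1)}_0\big)$; since $\mathbf{q}_*\{\P{\bsy{u}}\}=\{\P{v_a}\}$ and $\mathbf{q}_*$ has kernel $\bsy{\G}\subseteq\ch\widehat{\CV}$, this is equivalent to $\{\P{\bsy{u}}\}\subseteq\ch\widehat{\CV}^{(1)}_0$, which is hypothesis $1)$ of the present theorem. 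Similarly condition $2)$ of Theorem \ref{StatFeedbackLin} for $\mathbf{q}_*\CV$ says $d\bar t\in\mathrm{ann}\,\ch(\mathbf{q}_*\CV)^{(k-1)}$, where $\bar t$ is the independent variable of the quotient control system; since $\bar t=t$ and $\mathbf{q}^*(dt)=dt$, and $\ch(\mathbf{q}_*\CV)^{(k-1)}=\mathbf{q}_*\big(\ch\widehat{\CV}^{(k-1)}\big)$, pulling back gives $dt\in\mathrm{ann}\,\ch\widehat{\CV}^{(k-1)}$, which is hypothesis $2)$. Invoking Theorem \ref{StatFeedbackLin} on the Goursat bundle $(M/G,\mathbf{q}_*\CV)$ then yields that the equivalence $\varphi:M/G\to J^\s(\B R,\B R^q)$ can be chosen to be an ESFT precisely when $1)$ and $2)$ hold, completing the proof.

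The main obstacle I expect is the commutation of Cauchy bundles with the quotient map, i.e.\ establishing $\mathbf{q}_*\big(\ch\widehat{\CV}^{(j)}\big)=\ch(\mathbf{q}_*\CV)^{(j)}$ cleanly rather than by a coordinate slog. The right way is to reuse the trivialization from the proof of Proposition \ref{relativeGoursatProp1}: once $\widehat{\CV}=\CH\oplus\CZ$ with $\CZ$ integrable, bracket-commuting with $\CH$, and $\CH^{(i)}\cap\CZ^{(i)}=0$, every derived bundle and every Cauchy bundle splits compatibly ($\ch\widehat{\CV}^{(j)}=\ch\CH^{(j)}\oplus\CZ$), and then $\mathbf{q}_*$ simply strips off the $\CZ$ summand. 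A secondary subtlety is the freeness/transversality bookkeeping needed to guarantee $\ch\widehat{\CV}=\bsy{\G}$ exactly (not merely containing $\bsy{\G}$), but this is already handled in the proof of Theorem \ref{quotientCheck} under the relative Goursat hypothesis, so it can be cited.
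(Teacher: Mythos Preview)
Your proposal is correct and follows essentially the same route as the paper: both arguments invoke Theorem \ref{WhenQuotientIsControl} to put $\mathbf{q}_*\CV$ in control form, use the local trivialization $\widehat{\CV}\simeq\CH\oplus\CZ$ from Proposition \ref{relativeGoursatProp1} to obtain the splitting $\ch\widehat{\CV}^{(j)}=\ch\CH^{(j)}\oplus\CZ$ (and likewise for $\ch\widehat{\CV}^{(1)}_0$), and then translate the two conditions of Theorem \ref{StatFeedbackLin} for $\CH=\mathbf{q}_*\CV$ back up to $\widehat{\CV}$ via $\mathbf{q}^*(dt)=dt$ and $\mathbf{q}_*\{\P{\bsy u}\}=\{\P{\bsy v}\}$. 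The only difference is presentational: you isolate the functorial statement $\mathbf{q}_*\big(\ch\widehat{\CV}^{(j)}\big)=\ch(\mathbf{q}_*\CV)^{(j)}$ as a named lemma, whereas the paper uses it implicitly through the trivialization $\tau$; one small slip to fix is your line ``$\ch\widehat{\CV}^{(1)}_0=\CV\cap\bsy{\G}=\{0\}$'', which should read $\ch\CV^{(1)}_0=\{0\}$ (the hypothesis is on $\CV$, not $\widehat{\CV}$).
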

\begin{proof} 
As in Proposition \ref{relativeGoursatProp1} we introduce a local trivialization $(\t_*M, \t_*\widehat{\CV})=(M/G\times Z, \CH\oplus\CZ)$ in which $\CZ=\t_*\ch\widehat{\CV}=\t_*\bsy{\G}$.
From Theorem \ref{WhenQuotientIsControl}, $\CH=\CV/G$ is a control system of the form (\ref{quotientControlSys}) with quotient map $\mathbf{q}:M\to M/G$ of the form (\ref{quotientCoords}). Suppose there is a extended static feeback transformation
$\varphi: M/G\to J^\s(\B R, \B R^s)$ such that $\varphi_*\CH= \CC(\s)$. Then by Theorem \ref{StatFeedbackLin}, we conclude that
$$
\{\P {\bsy{v}}\}\subseteq\ch\CH^{(1)}_0\ \ \text{and}\ \ d\bar{t}\in\mathrm{ ann}\,\ch\CH^{(k-1)}
$$
where $\bar{t}=(\pi_1\circ\t)(\bsy{p})$ and $k$ is the derived length of $\CH$ which agrees with the derived length of $\widehat{\CV}$. Here $\pi_1: \B R\times X\times U\to\B R$ is projection onto the first factor and $\bsy{p}\in \B R\times X\times U$ is a typical point. We have  $d\bar{t}(\t_*\bsy{\G})=d(\t^*\bar{t})(\bsy{\G})=dt(\bsy{\G})=0$ and hence $d\bar{t}\in\text{ann}\,\Big(\ch\CH^{(k-1)}\oplus\t_*\bsy{\G}\Big)=
\text{ann}\,\Big(\t_*\ch\widehat{\CV}^{(k-1)}\Big)$ and thus $dt\in\text{ann}\,\ch\widehat{\CV}^{(k-1)}$ which is item $2)$. 

Next we deduce $\{\P {\bsy{v}}\}=\t_*\{\P {\bsy{u}}\}\subseteq\ch\CH^{(1)}_0\oplus\t_*\bsy{\G}=\t_*\ch\widehat{\CV}^{(1)}_0$ from which item $1)$ follows.

Conversely, suppose $1)$ and $2)$ hold. Since $\widehat{\CV}$ is a relative Goursat bundle, by Theorem  \ref{WhenQuotientIsControl}, there is a local diffeomorphism $\varphi:M/G\to J^\s(\B R,\B R^q)$ such that 
$\varphi_*\big(\CV/G\big)=\CC\langle\s\rangle$ some integer $q$ and signature $\s$.  Let $\t:M\to M$ be defined as in Proposition \ref{relativeGoursatProp1}. From $1)$, we have $\t_*\{\P {\bsy{u}}\}\subseteq\t_*\ch\widehat{\CV}^{(1)}_0$ which implies
$\{\P {\bsy{v}}\}\subseteq\ch\CH^{(1)}_0$. Since $(\pi_1\circ\t)(\bsy{p})=t$, from $2)$ we deduce that $d\bar{t}\in\text{ann}\,\t_*\ch\widehat{\CV}^{(k-1)}$ which implies that $dt\in\text{ann}\,\ch\CH^{(k-1)}$. By Theorem  \ref{StatFeedbackLin}, we conclude that $\varphi$ can be chosen to be an extended static feedback transformation.
\end{proof}

\subsection{A non-flat control system with a flat quotient}\label{non-flat}

There are few results that provide us with classes of non-flat control systems with more than a single input. One source of 2-input non-flat control systems is described by the following result of Martin and Rouchon \cite{MartinRouchon94}.

\begin{thm}[\cite{MartinRouchon94}]\label{MartinRouchonThm}
A driftless system $\dot{x}=f_1(x)u_1+f_2(x)u_2$ in $n$ states $x$ and two inputs $u$ is flat if and only if the elements of the derived flag of $E=\{f_1, f_2\}$ satisfy
$$
\dim E^{(k)}=\dim E^{(k-1)}+1, \ \ E^{(0)}=E,\ \ k=1,\ldots,n-2. 
$$
\end{thm}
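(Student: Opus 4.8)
\vskip 4 pt
\noindent{\it Proof proposal.} The plan is to treat the two implications separately; the implication $\Leftarrow$ is immediate from the classical Goursat normal form, while $\Rightarrow$ is the substantive half.

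{\it The implication $\Leftarrow$.} Suppose $\dim E^{(k)}=\dim E^{(k-1)}+1$ for $k=1,\ldots,n-2$, and let $X$ denote the $n$-dimensional state manifold. Since $\dim E=2$, this forces $\dim E^{(k)}=2+k$ for $0\le k\le n-2$; in particular $E^{(n-2)}=TX$, so $E$ is bracket generating, while $E^{(i)}\ne TX$ for $i<n-2$. These are exactly the hypotheses of the classical Goursat normal form (\ref{CtR2R}), so on an open dense subset of $X$ the distribution $E$ is locally diffeomorphic to $\CC^{(k)}_1$ with $k=n-2$. Regarded as a driftless control system, $\CC^{(k)}_1$ is, after relabelling the jet coordinates, the two-input chained form $\dot x_1=v_1,\ \dot x_2=v_2,\ \dot x_i=v_1 x_{i-1}$, $i=3,\ldots,n$, which is flat, with generically defined flat output $(x_1,x_n)$; cf.\ \cite{Murray97}. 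A state diffeomorphism composed with a static feedback identifies the original system with this chained form and preserves flatness, so the original system is flat.

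{\it The implication $\Rightarrow$.} Suppose now that the driftless system is flat; I must show the derived flag grows by exactly one at each step until it fills $TX$. I would first record the one automatic inequality: for any rank-$2$ bracket generating distribution on a manifold of dimension greater than $2$ one has $\dim E^{(1)}=3$, because modulo $E$ the commutator module $[E,E]$ is generated by the single field $[f_1,f_2]$ (if $[f_1,f_2]\in E$ then $E$ is Frobenius and not bracket generating). Thus the case $k=1$ is vacuous and the entire content is to exclude a jump $\dim E^{(k)}\ge\dim E^{(k-1)}+2$ at any level $k\ge2$. I would do this by contradiction, following the ruled-submanifold argument of \cite{MartinRouchon94}: if the first such jump occurs at level $k$, then $E$ carries a truncated chained structure through level $k-1$, and the surplus bracket directions appearing at level $k$ are shown to violate Rouchon's necessary ruled-submanifold condition for flatness when it is applied to a suitable $k$-fold prolongation of the system. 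An alternative route, closer to the machinery of section \ref{contactFlat}, is to use the prolongation bounds of \cite{SluisTilbury}, \cite{Chetverikov07} to pass to a static feedback linearizable prolongation, which by Theorem \ref{GGNF} is a Goursat bundle equivalent to a partial prolongation of $\CC^1_2$, and then trace the identities of Proposition \ref{derivedtype} back through the prolongation to read off the slow-growth condition for $E$.

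{\it The main obstacle.} Everything nontrivial lies in the implication $\Rightarrow$: producing, from a jump of size at least $2$ in the derived flag of $E$, an explicit obstruction to flatness, i.e.\ showing that such a jump cannot be absorbed by any prolongation --- either because it violates the ruled-submanifold condition at some order (the \cite{MartinRouchon94} route) or because it contradicts the type-number identities of Proposition \ref{derivedtype} and the integrability requirements of Definition \ref{GoursatBundle} for the prolonged system (the generalised Goursat normal form route). This is a propagation argument through the prolongation tower rather than a single estimate, and it is where the work lies; the implication $\Leftarrow$ and the surrounding bookkeeping are routine once the classical Goursat normal form and, for the second route, Theorem \ref{GGNF}, are granted.
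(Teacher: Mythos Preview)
The paper does not prove Theorem \ref{MartinRouchonThm}; it is stated as a result of Martin and Rouchon \cite{MartinRouchon94} and used as a black box to manufacture a non-flat example. There is therefore no in-paper argument to compare your proposal against.

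On the substance of your sketch: the $\Leftarrow$ direction is fine and is exactly the observation the paper makes immediately after the theorem statement (Goursat normal form $\Rightarrow$ chained form $\Rightarrow$ flat). For $\Rightarrow$, your first route---invoking the ruled-submanifold necessary condition from \cite{MartinRouchon94} to rule out a growth jump---is indeed the line of the original proof, though what you have written is only a pointer to that argument, not the argument itself. Your proposed alternative route has a genuine gap: you assume that flatness of a two-input system guarantees the existence of a static feedback linearizable prolongation, and then appeal to \cite{SluisTilbury}, \cite{Chetverikov07}. Those references bound the number of integrators \emph{given} that some prolongation linearizes; they do not establish that every flat system admits such a prolongation. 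The paper itself is careful to state only the one-way implication (linearizable prolongation $\Rightarrow$ flat), so this step would need independent justification before the Goursat-bundle machinery of section \ref{contactFlat} can be brought to bear.
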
 
Notice that the condition imposed on $E$ implies that it is equivalent to the contact system 
$\mathcal{C}^{n-2}(\mathbb{R},\mathbb{R})$ on $J^{n-2}(\mathbb{R},\mathbb{R})$ via a local diffeomorphism of state space $X$, by the Goursat normal form.

Granting this, a particularly elegant class of non-flat control systems in 5 states and 2 control can be constructed by taking $E$ to be a generic 2-plane distribution on $\B R^5$ whose growth vector is 
$[2,\ 3,\ 5]$. Such distributions have been classified, \cite{Cartan10}. The most symmetric of these has local realisation 
$
{E}=\Big\{\P {x_1}+x_3\P {x_2}+x_5\P {x_3}+x_5^2\P {x_4},\ \P {x_5}\Big\}=\{f_1,\ f_2\}.
$
The control system corresponding to the driftless system of the Martin-Rouchon theorem is therefore
\begin{equation}\label{nonflatRouchon}
\CV=\Big\{\P t+u_1f_1+u_2f_2,\ \P {u_1},\ \P {u_2}\Big\}.
\end{equation}
Its refined derived type is
\begin{equation}\label{derivedTypeRouchon}
\mathfrak{d}_r(\CV)=[\, [3,\, 0], [5,\, 2,\, 3],\, [6,\, 3,\, 3],\, [8,\, 8]\, ].
\end{equation}
Since $\mathfrak{d}_r(\CV)$ does not satisfy Proposition \ref{derivedtype}, this verifies the fact that $\CV$ is not linearizable by {\it any} local diffeomorphism of $\B R^8$. Because of its non-flatness there can be no partial prolongation of $\CV$ which is static feedback linearizable.\ 
Let us next investigate the role of symmetry in identifying {\it flat subsystems} of (\ref{nonflatRouchon}). 
A subalgebra of the symmetry algebra of $\CV$ is spanned by the vector fields
$
\text{Sym}(\CV)=\{X_1,\ldots, X_5\}
$
where 
$$
\begin{aligned}
&X_1=\frac{1}{2}x_1^2\P {x_2}+x_1\P {x_3}+2x_3\P {x_4}+\P {x_5},\ \ X_2=x_1\P {x_2}+\P {x_3},\ \ X_3=\P {x_4},\ \ X_4=\P {x_1},\ \ X_5=\P {x_2},
\end{aligned}
$$
and these are control admissible since they are state-space symmetries. Let us for instance consider the quotient of $(\B R^8,\CV)$ by the local Lie transformation group generated by the abelian subalgebra $\bsy{\G}_0=\{X_1,X_3\}$. The action is regular, for instance, in a neighbourhood of $0\in\B R^8$. Invoking Theorem \ref{quotientCheck}, we  check that the refined derived type of $\widehat{\CV}:=\CV\oplus\bsy{\G}_0$ is
$
\mathfrak{d}_r(\widehat{\CV})=[\,[5, 2], [7, 4, 5], [8, 8]\,]
$  
which satisfies the constraints of Proposition \ref{derivedtype} with signature $\s=\text{decel}\,\widehat{\CV}=\langle 1,1\rangle$. The only nontrivial fundamental bundle is $\Xi^{(1)}_0/\Xi^{(1)}$ while calculation shows that this is equal to $\{dt\}$ and $\Xi^{(1)}_0$ is integrable. These observations guarantee, by Definition \ref{relativeGoursatBundle}, that $\widehat{\CV}$ is a relative Goursat bundle of signature $\s=\langle 1,1\rangle$. By Theorem \ref{quotientCheck}, the quotient $\CV/\bsy{\G}_0$ is locally diffeomorphic to the partial prolongation $\mathcal{C}\langle 1, 1\rangle$. 

We now use Theorem \ref{StatFeedbackLin} to check for the existence of a static feedback linearization of $\CV/\bsy{\G}_0$. We find that (with derived length $k=2$), $dt\notin \Xi^{(1)}(\widehat{\CV})$ and hence no static feedback transformation exists. In summary, we have the following facts concerning control system (\ref{nonflatRouchon}):
\begin{itemize}
\item System (\ref{nonflatRouchon}) is not flat by Theorem \ref{MartinRouchonThm}.
\item The quotient $\CV/\bsy{\G}_0$ of (\ref{nonflatRouchon}) is linearizable but {\it not} via a static feedback transformation by Theorem \ref{StatFeedbackLin}.
\end{itemize} 
It turns out, in fact, that $\CV/\bsy{\G}_0$ is flat. We prove this by showing that some prolongation of it {\it is} static feedback linearizable.
Indeed,  prolong $\CV$ as follows
$
\text{pr}\,\CV=\Big\{\P t+u_1f_1+u_2f_2+v_1\P {u_1},\ \P {v_1},\ \P {u_2}\Big\}.
$
Again, as per Theorem \ref{quotientCheck}, we study the augmented distribution $\text{pr}\, \widehat{\CV}:=\text{pr}\,\CV\oplus\bsy{\G}_0$ and calculate refined derived type to be
$
\mathfrak{d}_r\left(\text{pr}\,\widehat{\CV}\right)=[\,[5, 2], [7, 4, 4], [9, 9]\,].
$
This satisfies item 1. of Definition \ref{relativeGoursatBundle} with signature $\s=\text{decel}\left(\text{pr}\, \widehat{\CV}\right)=\langle 0, 2\rangle$. Since the final entry of $\s$ is greater than 1, we complete the check that $\text{pr}\, \widehat{\CV}$ is a relative Goursat bundle by verifying that its resolvent bundle is integrable. Indeed, we get $\ch\text{pr}\,\widehat{\CV}^{(1)}=\{\P {x_4}, \P {u_2}, \P {v_1}, X_1\}$ and calculate that the rank 6 resolvent bundle is integrable. 
Therefore we deduce that the prolongation $\text{pr}\,\CV$ of $\CV$ has a static feedback linearizable quotient by $\bsy{\G}_0$. A general result can be established which proves that  $\CV/\bsy{\G}_0$ is flat.  We will not elaborate on this here but rather verify the result explicitly.  

The Lie transformation group $G_0$ generated by $\bsy{\G}_0$ acts freely on $\B R^8$. The first integrals of $\bsy{\G}_0$ are spanned by
$
\text{inv}\,\bsy{\G}_0=\{t,\ u_1,\ u_2,\ x_1,\ x_1x_3-2x_2,\ x_1^2x_5-2x_2\}.
$
Implementing Theorem \ref{mainThm}, set $w_1=x_1,\ w_2=x_1x_3-2x_2,\ w_3=x_1^2x_5-2x_2$, and obtain
\begin{equation}\label{MartinQu}
\CH:=\CV/\bsy{\G}_0=\Big\{\P t+u_1\Big(\P {w_1}-\frac{w_2-w_3}{w_1}\big(\P {w_2}+2\P {w_3}\big)\Big)+u_2w_1^2\P {w_3},\ \P {u_1},\ \P {u_2}\Big\}.
\end{equation}

Using Theorem \ref{GGNF} it can be shown that (\ref{MartinQu}) is locally diffeomorphic to the contact distribition $\mathcal{C}\langle 1,1\rangle$, as predicted by Theorem \ref{quotientCheck}. That is, the Brunovsky normal form
$
\dot{y}_1=u_1,\ \dot{z}_1=z_2,\ \dot{z}_2=u_2.
$
This is in stark contrast to the geometric structure of $\CV$ which is not locally diffeomorphic to a Brunovsky normal form having refined derived type
(\ref{derivedTypeRouchon}) and therefore fails to satisfy the necessary constraints of Proposition \ref{derivedtype}. However, it is easily checked that a prolongation of $\CH$ in direction $\P {u_1}$ results in a distribution locally equivalent to $\CC\langle 0,2\rangle$ by a static feedback transformation. This verifies that $\CH$ is flat, as predicted.  Thus, if $\bsy{\bar{\o}}=\ker\CH$, we have 
$\bsy{\o}\supset\mathbf{q}^*\bsy{\bar{\o}}=\big\{x_1\o_1-\o_3,\ \frac{1}{2}x_1^2\o_1-\o_4,\ \o_5\big\}$ is a subsystem of $\bsy{\o}=\{\o_1,\ldots,\o_5\}=\mathrm{ann}\CV$ which projects to a flat control system on 
$M/G_0$, $\mathbf{q}:M\to M/G_0$ being the quotient map.



\section{Application: Trajectory Planning for Under-Actuated Ships}\label{shipSection}

We now take up a major application of the forgoing results by examining the role of symmetry reduction in trajectory planning for under-actuated marine vessels. A textbook account of this dynamical system is given in \cite{fossen}. The full 6 degrees-of-freedom under-actuated ship dynamical system which takes hyrodynamic damping and the effects of the Earth's rotation into account is usually treated by neglecting the heave, pitch and roll degrees of freedom leaving surge, yaw and sway as the most significant motions in building ship guidance control systems.   

The simplest such control system, discussed in a number of recent works, for instance \cite{ramirez}, is 
\begin{equation}\label{shipcontrol}
\begin{aligned}
&\dot{x}=u_1\cos\th-z\sin\th, \ \ 
\dot{y}=u_1\sin\th+z\cos\th,\ \ 
\dot{\th}=u_2,\ \ 
\dot{z}=-\g u_1u_2-\b z.
\end{aligned}
\end{equation}

Here $u_1$ controls the forward velocity or {\it surge}; $u_2$ controls the angular velocity of the ship about its midpoint axis, {\it yaw}. Variable $z$ represents the velocity with which the ship is displaced in a direction perpendicular to its longitudinal axis, called {\it sway}; see { Fig.} 1.   Besides neglecting pitch, heave and roll, other reasonable simplifications have been made in order to arrive at control system (\ref{shipcontrol}).

\vskip 0 pt

\begin{center}
\begin{tikzpicture}[scale=0.9]
\filldraw[fill=green!5](-5,-2) rectangle (5.5,4.3);
\filldraw[fill=blue!15][rotate=-45](0,0) rectangle (2,4);
\draw  [rotate=-45] (0,0) rectangle (2,4);
\filldraw[fill=blue!15][rotate=-45](0,4) .. controls  (1,6) .. (2,4);
\draw[rotate=-45](0,4) .. controls  (1,6) .. (2,4);
\draw[rotate=-45](2,0) arc (0:-180:1cm); 
\filldraw[rotate=-45][fill=blue!15](2,0) arc (0:-180:1cm); 

\filldraw[xshift=45 pt,yshift=36 pt][blue](0.5,-0.5) circle (1mm);
\draw[xshift=45 pt,yshift=36 pt](0.7,-0.5)--(1.9,-0.5);
\draw[xshift=30 pt,yshift=30 pt][thick,->](-1,1)--(-2.0,2.0);
\draw[xshift=30 pt,yshift=30 pt](-1.3,1.5)node[above]{$z$};
\draw[xshift=45 pt,yshift=36 pt](0.6,-0.4)--(1.5,0.5);
\draw[xshift=45 pt,yshift=36 pt](1.3,-0.2) node{$\th$};
\draw[xshift=50 pt,yshift=30 pt][thick,->](-6,-2)--(-6,0);
\draw[xshift=50 pt,yshift=30 pt][thick,->](-6,-2)--(-4,-2);
\draw[xshift=50 pt,yshift=30 pt](-6.0,-2.0)node[below]{$O$};
\draw[xshift=50 pt,yshift=30 pt](-6.2,0)node{$y$};
\draw[xshift=50 pt,yshift=30 pt](-4,-2)node[below]{$x$};
\draw[xshift=45 pt,yshift=36 pt](0.5,-0.5)node[below]{$(x,y)$};
\end{tikzpicture}
\end{center}
\begin{center}
\vskip 10 pt
{\small \bf Figure 1: The under-actuated ship.}
\end{center}
\vskip 15 pt
For simplicity the constants $\b$ and $\g$ in (\ref{shipcontrol}) have been set to unity in this section. To study (\ref{shipcontrol}) we form the Pfaffian system

\begin{equation}\label{shipPfaff}
\begin{aligned}
\bsy{\o}=&\Big\{dx-(u_1\cos\th-z\sin\th)\,dt,\ dz-(u_1\sin\th+z\cos\th)\,dt,\ d\th-u_2\,dt,\ dz+(u_1u_2+z)\,dt  \Big\}\cr
        =&\Big\{\o^1,\ \ \ \o^2,\ \ \ \o^3,\ \ \ \o^4\Big\}.
\end{aligned}
\end{equation} 
The refined derived type of 
$
\CE=\ker\bsy{\o}
$
is
$$
\mathfrak{d}_r(\CE)=[\,[3,0], [5,0,0],[7,7]\,].
$$
This proves, by Theorem \ref{GGNF}, that (\ref{shipPfaff}) is not linearizable -- there is {\it no equivalence} to a Brunovsky normal form.   
On the other hand, we will prove that (\ref{shipPfaff}) is {\it flat}.\footnote{The authors in \cite{ramirez} state that (\ref{shipPfaff}) is {\it not} flat. It is shown later in this section that a prolongation of (\ref{shipPfaff}) is static feedback linearizable implying that (\ref{shipPfaff}) is indeed flat.} However, it must be stressed that its flatness {\it does not} facilitate trajectory planning in an obvious way, as we shall see.

\subsection{The control admissible symmetries.} We conjecture that the symmetry group of (\ref{shipPfaff}) is infinite-dimensional and in full generality possibly very complicated. However, a significant realization of this work is that the full group of symmetries is not required  for control purposes, but only the control admissible symmetries. A calculation shows that the Lie algebra of infinitesimal control admissible symmetries
$\bsy{\Upsilon }$ is spanned by $X_1,\ldots, X_8$, where 

\begin{equation}\label{shipSyms}
\begin{aligned}
&X_1=-y\P x+x\P y+\P {\th},\ \ \  \ \ \  
X_2= x\P x+y\P y+z\P z+u_1\P {u_1},\cr
&X_3=e^{-t}\big(\P y-\cos \theta\P z-\sin \theta\P {u_1}\big),\ \ \ \ \ 
X_4=e^{-t}\big(\P x-\sin \theta \P z+\cos \theta \P {u_1}\big),\cr
&X_5=-x\P x+y\P y+\sin2\th\P {\th}+z\cos2\th\P z-(u_1\cos 2\th-2z\sin 2\th)\P {u_1}+2u_2\cos 2\th\P {u_2}\cr
&X_6=y\P x+x\P y+\cos 2\th\P \th-z\sin 2\th\P z+(u_1\sin 2\th+2z\cos 2\th)\P {u_1}-2u_2\sin 2\th\P {u_2}\cr
&X_7=\P y, \ \ \ \ X_8=\P x.
\end{aligned}
\end{equation}
The Levi decomposition $\bsy{\Upsilon}=\mathbf{S}\ltimes \mathbf{R}$ has semisimple subalgebra $\mathbf{S}=\{X_1, X_5, X_6\}$ isomorphic to $\mathfrak{sl}(2)$ while its radical is $\mathbf{R}=\{X_2, X_3, X_4, X_7, X_8\}$. In general, the Lie algebra of infinitesimal admissible symmetries forms a subalgebra of the Lie algebra of all infinitesimal symmetries. The usually considered \cite{GrizzleMarcus}, \cite{Elkin} state space symmetries $\{X_1, X_7, X_8\}$ form a comparatively trivial subalgebra of the control admissible symmetries, expressing the obvious geometry of the ship configuration. 

Even if we specialize to subalgebra $\mathbf{R}$, there is a lot of choice  as to the subalgebra of $\mathbf{R}$ to be used for symmetry reduction.  In this paper, due to space limitations, we shall not dwell on the interesting problem of classifying the control morphisms for (\ref{shipPfaff}). The immediate goal is to show how the theory of this paper explicitly solves the problem of trajectory planning.

We shall investigate the 2-dimensional abelian Lie subalgebra
$$
\bsy{\G}=\Big\{e^{-t}\left(\P x-\sin\th\P z+\cos\th\P {u_1}\right),\ \ \P x\Big\}=\Big\{X_4,\ \ X_8\Big\}\subset\mathbf{R}.
$$
At some point, we shall require the Lie group action generated by the 1-parameter subgroups of $\bsy{\G}$ on the 7-dimensional space $M$ upon which $\bsy{\o}$ is defined and which has coordinates $\bsy{m}=(t, x, y, z, \th, u_1, u_2)$. Since $\bsy{\G}$ is abelian the Lie group in question is just $G=\mathbb{R}^2$.  We will continue to denote it by $G$. The action $\mu:G\times M\to M$ is explicitly
\begin{equation}\label{action}
\bsy{\widetilde{m}}=\mu(\e_1,\e_2,\bsy{m})=\Big(t,\ \ x-\e_1e^{-t}+\e_2,\ \ y,\ \ \th,\   \ z-\e_1e^{-t}\sin\th,\ \ u_1+\e_1e^{-t}\cos\th,\ \ u_2\Big)
\end{equation}
Here $\e_1,\ \e_2$ are coordinates on $G$. Recall that $\mu$ constitutes a Lie transformation group of symmetries of $\bsy{\o}$ in the sense that
$
\mu_{\bsy{\e}}^*\bsy{\o}\subseteq \bsy{\o}
$
where $\mu_{\bsy{\e}}(\bsy{m})=\mu(\bsy{\e},\bsy{m})$,\ \ $\bsy{\e}=(\e_1,\ \e_2)$. The action of $G$ is regular and free.  
Furthermore the action is {\it transverse} to $\bsy{\o}$: $\bsy{\G}\cap\CE=0$, where
$$
\CE=\Big\{\P t+(u_1\cos\th-z\sin\th)\P x+(u_1\sin\th+z\cos\th)\P y+u_2\P {\th}-(u_1u_2+z)\P z,\ \P {u_1},\ \P {u_2}\Big\}.
$$
It is also transverse to $\CE^{(1)}$ so that by \cite{AF}, (Theorem 5.1), its symmetry reduction by this $G$-action is Pfaffian. Finally, the hypotheses of Theorem \ref{mainThm} are satisfied. We will now use this to solve the trajectory planning problem for (\ref{shipcontrol}). 

\subsection{Quotient of the control EDS}

For this application it turns out to be easy to construct the quotient $\CE/G$ and then check that it is static feedback linearizable. However, in general this will not be the case and accordingly we will first show how to perform this check using the algorithmic results of section \ref{contactFlat}.  This implies that we invoke Theorems \ref{quotientCheck} and \ref{StatFeedbackLin} to show that the quotient $\CE/G$ of $\CE$ by the $\B R^2$-action generated by $\bsy{\G}$ is static feedback linearizable. With 
$\widehat{\CE}=\CE\oplus\bsy{\G}$, we find that 
$$
\mathfrak{d}_r(\widehat{\CE})=[\,[5,2], [7,7]\,]
$$ 
which satisfies item 1. of Definition \ref{relativeGoursatBundle} of a relative Goursat bundle with signature 
$$
\text{decel}\,\widehat{\CE}=\langle 2\rangle.
$$ 
From $\text{decel}\,\widehat{\CE}$, we deduce that only the resolvent need be considered to complete the check of Definition \ref{relativeGoursatBundle} which is easily shown to be 
$R(\widehat{\CE})=\{\P {u_1}, \P {u_2}\}\oplus\bsy{\G}$. This is plainly integrable. Hence by Theorem \ref{quotientCheck}, the quotient $\CE/G$ is locally equivalent to $\mathcal{C}\langle 2\rangle$.  
Finally, we check for the existence of static feedback equivalences. In this case the derived length is $k=1$ so Theorem \ref{StatFeedbackLin} doesn't apply. It is possible to deal with this case also, however we will instead verify that static feedback equivalences exist by direct calculation.

We can do this from knowledge of the semi-basic forms as in $\cite{AF}$ or else by computing $\mathbf{q}_*\CE$, where $\mathbf{q}: M\to M/G$ is the quotient map.  
Now the invariants of the action are easily shown to be
$
\{t,\ y,\ \theta,\ u_2,\ u_1+z\cot\th\}
$
or else we could take the set
$
\{t,\ y,\ \theta,\ u_2,\ u_1\sin\th+z\cos\th\}.
$
Using the former, the quotient map $\mathbf{q}:M\to M/G$ is taken to be
$$
\mathbf{q}(t,x,y,z,\th,u_1,u_2)=(t=t,\ w_1=y,\ w_2=\th,\ w_3=u_2,\ w_4=u_1+z\cot\th)
$$ 
where $t, w_1, w_2, w_3, w_4$ are labels for the local coordinates on $M/G$. 
It can be checked that a cross-section $\s:M/G\to M$ for $\mathbf{q}$ is given by
$$
\s(t, w_1, w_2, w_3, w_4)=\big(t=t,\ x=0,\ y=w_1,\ \th=w_2,\ z=0,\ u_1=w_4,\ u_2=w_3).
$$
We get the quotient of $\bsy{\o}$ to be
$$
\bsy{\bar{\o}}=\Big\{dw_2-w_3dt,\ dw_1-w_4\sin(w_2)dt\Big\}.
$$
This Pfaffian system is particularly easy to integrate and is clearly linearizable by static feedback transformations. The integral manifolds are expressible in terms of two functions $p(t), q(t)$ and their derivatives
$$
\bar{s}(t)=\left(t,\ w_1=p(t),\ w_2=q(t),\ w_3=\dot{q}(t),\ w_4=\frac{\dot{p}(t)}{\sin\big(q(t)\big)}\right).
$$
This is the general solution of the quotient $\mathbf{q}_*\CE=\CE/G$. 

\subsection{General solution of the ship control system}

We now implement Theorem \ref{mainThm} to construct all the trajectories of (\ref{shipPfaff}). Effectively, the theorem permits us to suppose that the trajectories of (\ref{shipPfaff}) have the form
$$
s(t)=\mu\big(g(t),(\s\circ\bar{s})(t)\big)
$$ 
for some function $g:\mathbb{R}\to G\simeq \mathbb{R}^2$. If $g(t)=(g_1(t),\ g_2(t))$ then we find that $s^*\bsy{\o}=0$ implies that $g$ satisfies the completely integrable system
$$
\begin{aligned}
&\dot{g}_1(t)=-e^t\dot{p}(t)\frac{d}{dt}\Big(\cot\big(q(t)\big)\Big),\ \ 
\dot{g}_2(t)=\dot{p}(t)\left(\cot\big(q(t)\big)-\frac{d}{dt}\Big(\cot\big(q(t)\Big)\right)
\end{aligned}
$$
and hence
$$
g(t)=\left(\int {-e^t\dot{p}(t)\frac{d}{dt}\Big(\cot\big(q(t)\big)\Big)}\,dt,\ \ \int{\dot{p}(t)\left(\cot\big(q(t)\big)-\frac{d}{dt}\Big(\cot\big(q(t)\Big)\right)}\,dt\right).
$$
We then obtain all the trajectories in the form
$$
\begin{aligned}
&x = -e^{-t}g_1(t)+g_2(t),\ \  y = p(t),\ \  \th = q(t),\ \  z = -e^{-t}g_1(t)\sin\big(q(t)\big), \cr
&u_1 = e^{-t}g_1(t)\cos\big(q(t)\big)+\frac{\dot{p}(t)}{\sin\big(q(t)\big)},\ \  u_2 = \dot{q}(t).
\end{aligned}
$$

For instance if we fix $\ds  \th=\frac{\pi}{4}$ and $y=p(t)$, arbitrary, then
$$
x(t)=-\frac{\pi}{4}e^{-t}+p(t),\ \ z(t) = -\frac{\sqrt{2}}{8\pi}e^{-t}, \ \ u_1(t)=\frac{\sqrt{2}}{8}\left(e^{-t}\pi+8\dot{p}(t)\right),\ \ u_2(t)=0.
$$

\subsection{Trajectory planning for the under-actuated ship}

We wish to prescribe a surface trajectory $\bsy{C}$ given by $\bsy{x}(t)=(x(t),\,y(t))$. Suppose we choose the parametrisation of $\bsy{C}$ in the form $\bsy{x}(t)=(x(t),t)$. That is, $\bsy{C}$ is a graph over the $y$-axis. Looking at the formulas for the trajectories, we find that
$$
g_1(t)=-\int{e^t\dot{Q}}\,dt,\ \ g_2(t)=\int{(Q-\dot{Q})}\,dt
$$ 
where
$$
Q(t)=\cot\big(q(t)\big).
$$
Integration by parts gives $\ds g_1(t)=-\left(e^tQ-\int{e^tQ}\,dt\right)$ and hence
$$
x(t)=e^{-t}\left(e^tQ-\int{e^tQ}\,dt\right)+\int{Q}\,dt-Q
$$
and we wish to express $Q$ in terms of $x(t)$. Multiplying both sides by $e^t$ and differentiating gives
$$
\frac{d}{dt}\Big(x(t)e^t\Big)=e^t\int{Q}\,dt.
$$
It easily follows that
$$
q(t)=\cot^{-1}\left(\frac{d}{dt}\left(e^{-t}\frac{d}{dt}\Big(e^tx(t)\Big)\right)\right)=\cot^{-1}\big(\dot{x}(t)+\ddot{x}(t)\big).
$$
Thus every trajectory of the form $\bsy{x}(t)=(x(t),t)$ can be prescribed exactly. 
Indeed if we prescribe the surface path $\bsy{x}(t)=(x(t),t)$ then it is straightforward to derive, from the above argument, that the system trajectories are given by
$$
\begin{aligned}
&\theta(t)=\cot^{-1}(\dot{x}+\ddot{x}), \ \ z(t)=\frac{\ddot{x}}{\sqrt{1+(\dot{x}+\ddot{x})^2}},\ \ 
u_1(t)=\frac{1+\dot{x}(\dot{x}+\ddot{x})}{\sqrt{1+(\dot{x}+\ddot{x})^2}},\ \ u_2(t)=\frac{\ddot{x}+\dddot{x}}{1+(\dot{x}+\ddot{x})^2}.
\end{aligned}
$$
The function $g:\mathbb{R}^2\to G$ is given by
$$
g(t)=\left(-e^t\,\ddot{x},\ x-\ddot{x}\right).
$$
One can check that the map $s:\mathbb{R}\to M$ given by
$$
s(t)=\left(t,\ x(t),\ t,\ \theta(t),\ z(t),\ u_1(t),\ u_2(t)\right)=(t,\ x,\ y,\ \theta,\ z,\ u_1,\ u_2) 
$$
satisfies $s^*\bsy{\o}=0$ and is therefore an integral submanifold of the control system EDS $\bsy{\o}$.

On the face of it this may seem to imply that only trajectories which are graphs over the $y$-axis can be planned in this way. But this is not so because we can toggle the physical definition of $x$ and $y$ in a given real situation. One can also use other symmetries (\ref{shipSyms}) and symmetry reductions for this purpose. Thus every surface trajectory can be explicitly and exactly planned.

\subsection{Examples of trajectory planning}
\subsubsection{Straight line path.} If $\bsy{x}(t)=(\l t+\mu,\ t)$ then the system trajectories are
$$
\theta(t)=\cot^{-1}(\lambda),\ z(t) = 0,\ u_1(t) = \sqrt{1+\lambda^2},\ u_2(t) = 0.
$$

\subsubsection{Parabolic path.} If $\ds\bsy{x}(t)=\left(\frac{1}{2}t^2,\ t\right)$ then the system trajectories are
$$
\theta(t)=\cot^{-1}(t+1),\ z(t) = \frac{1}{\sqrt{2+2t+t^2}},\ u_1(t) = \frac{1+t+t^2}{\sqrt{2+2t+t^2}},\ u_2(t) = -\frac{1}{2+2t+t^2}
$$

\hskip -25 pt \includegraphics[width=5.5cm,height=4cm,scale=0.6]{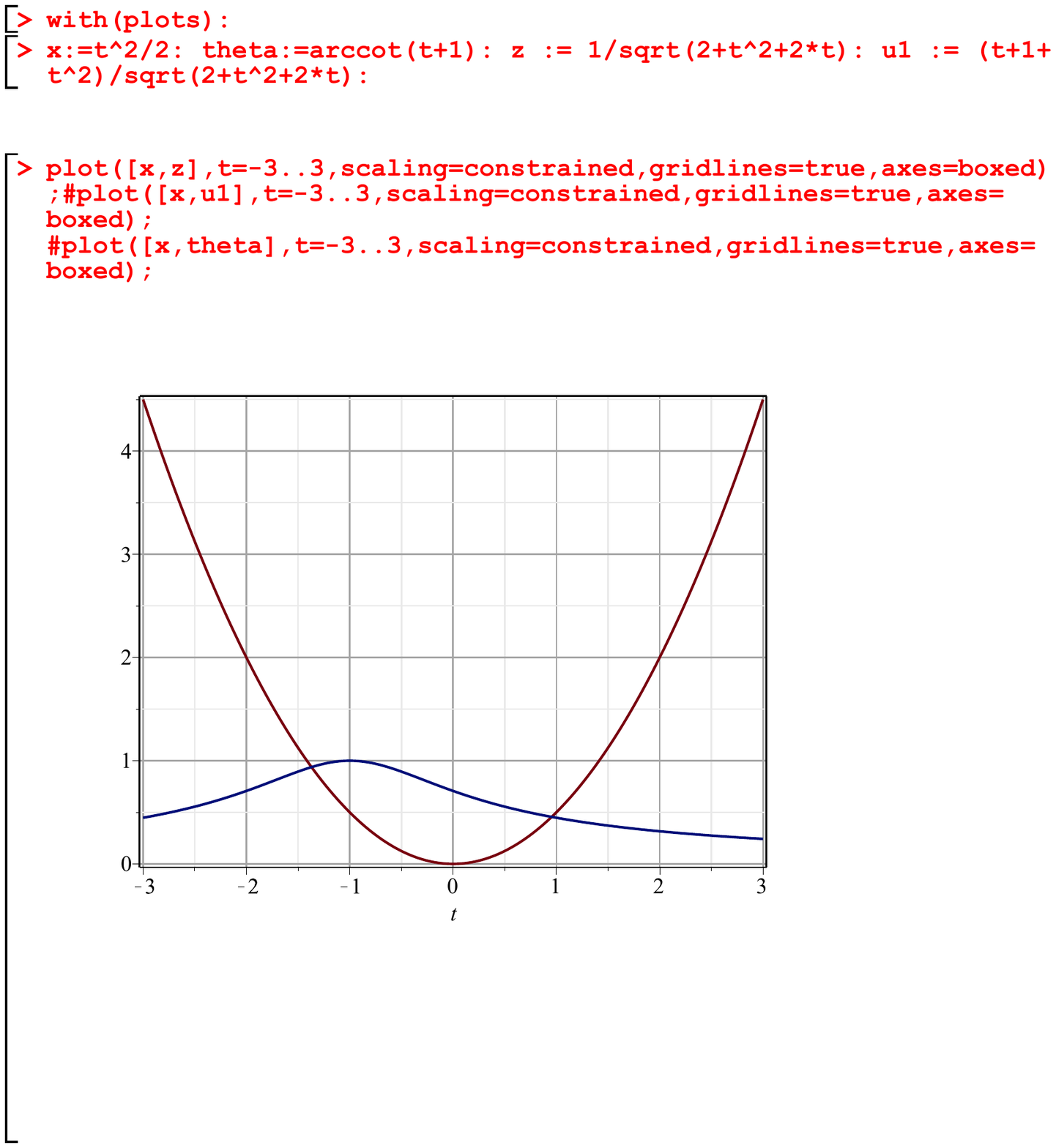} \includegraphics[height=4cm,width=5.5cm,scale=0.6]{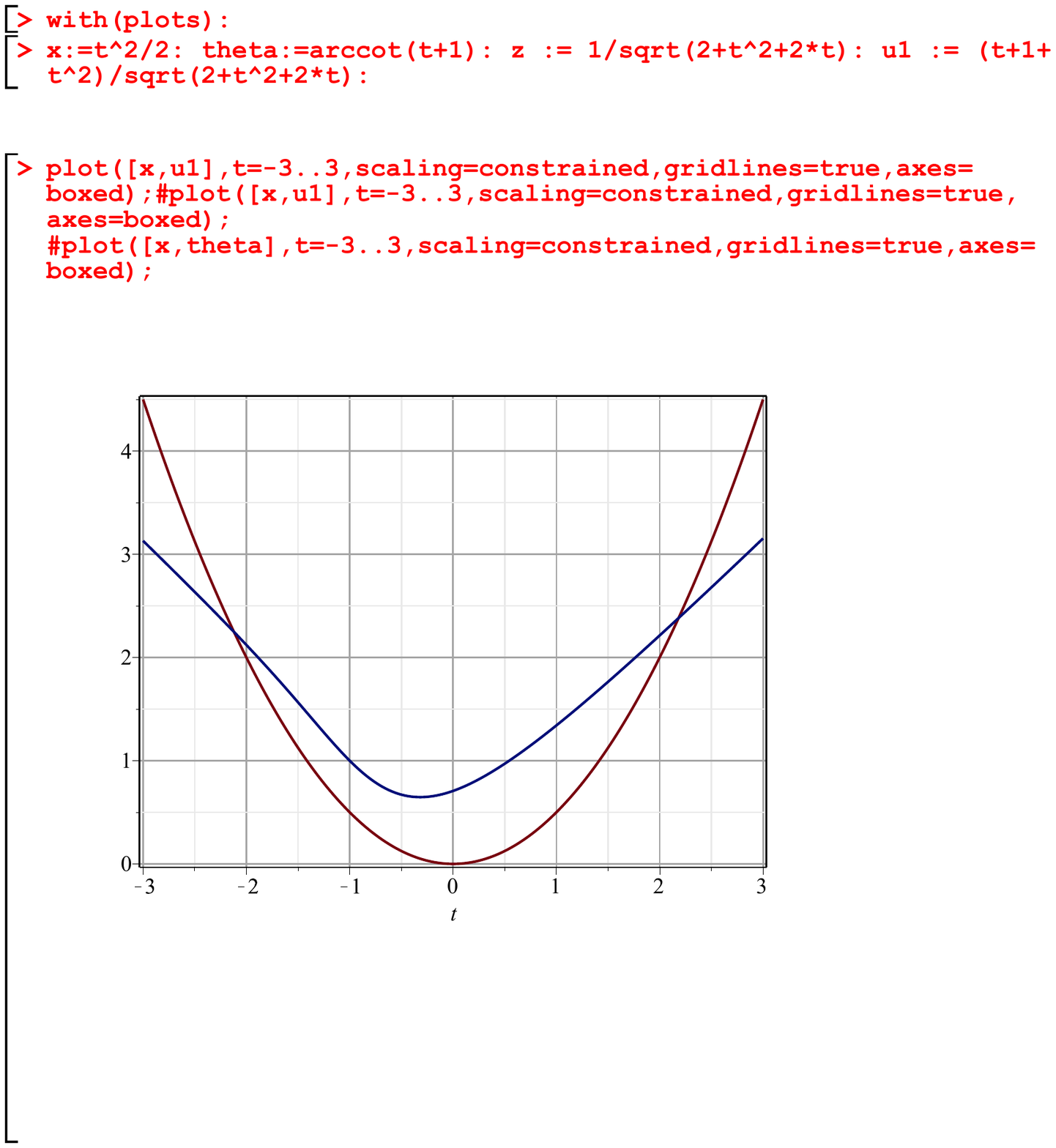} \includegraphics[height=4cm,width=5.5cm,scale=0.6]{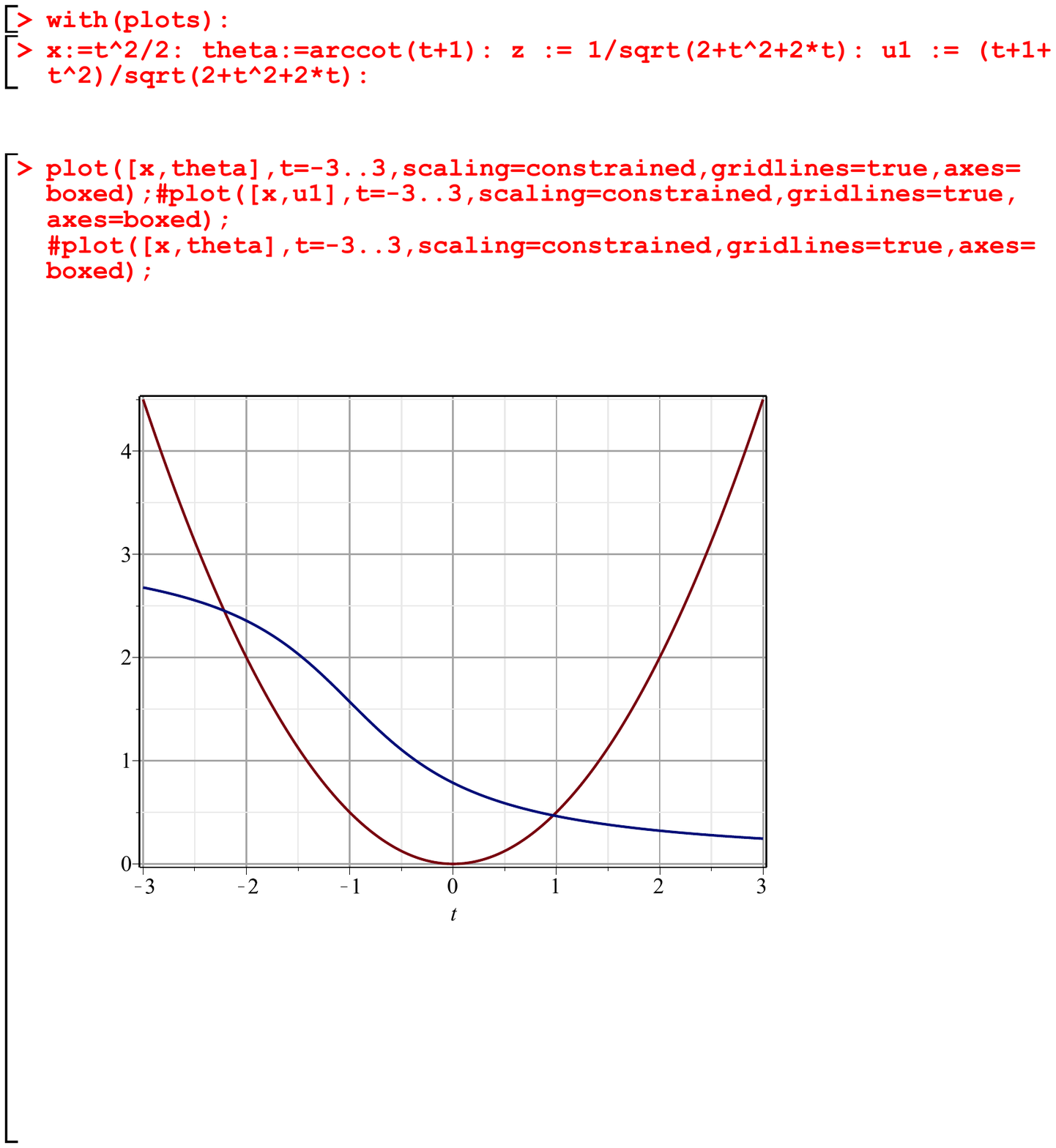}

\begin{center}
{\small \bf Fig. 2. Graphs of $\bsy{x}\ \&\ \bsy{z}$,\ \ $\bsy{x}\ \&\ \bsy{u_1}$,\ \ $\bsy{x}\ \&\ \bsy{\theta}$, respectively, for the parabolic path $\ds \bsy{x}=\bsy{\frac{\bsy{1}}{\bsy{2}}t^2}$}
\end{center}

\subsubsection{Circular path} If $\bsy{x}(t)=(\pm\sqrt{1-t^2},\ t)$ then the system trajectories involve complicated expressions. For instance for the upper and lower semi-circles respectively we get
$$
\theta_+(t)=\frac{\pi}{2}-\tan^{-1}\left(\frac{t^3-t-1}{(1-t^2)^{3/2}}\right),\ \ \ 
\theta_-(t)=\frac{\pi}{2}+\tan^{-1}\left(\frac{t^3-t-1}{(1-t^2)^{3/2}}\right).
$$
Of interest are the graphs of the dynamical variables in comparision to the surface path
\vskip 15 pt
\hskip 15 pt\includegraphics[height=6cm,width=4.5cm]{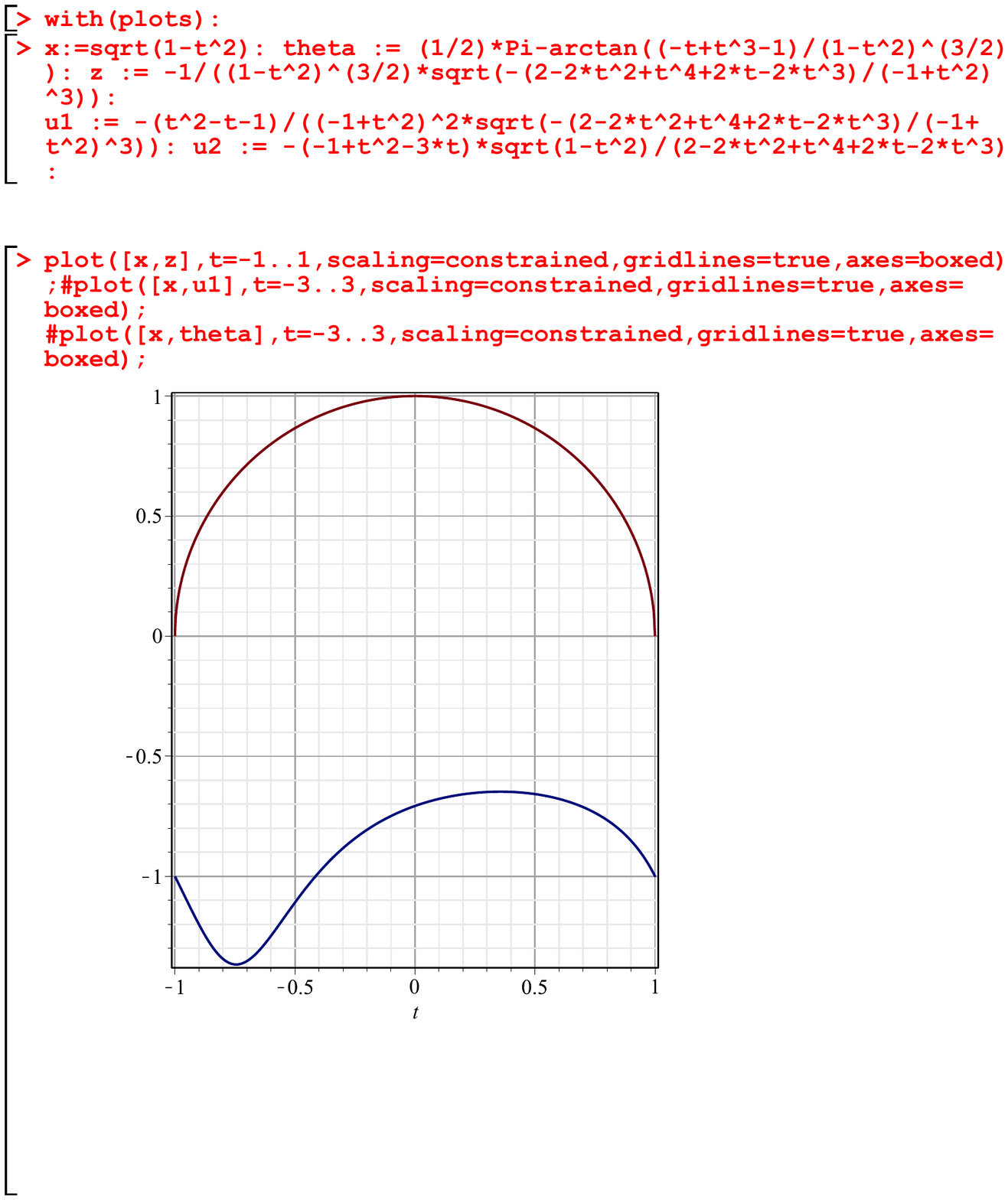} \  \includegraphics[width=3.5cm,height=6cm]{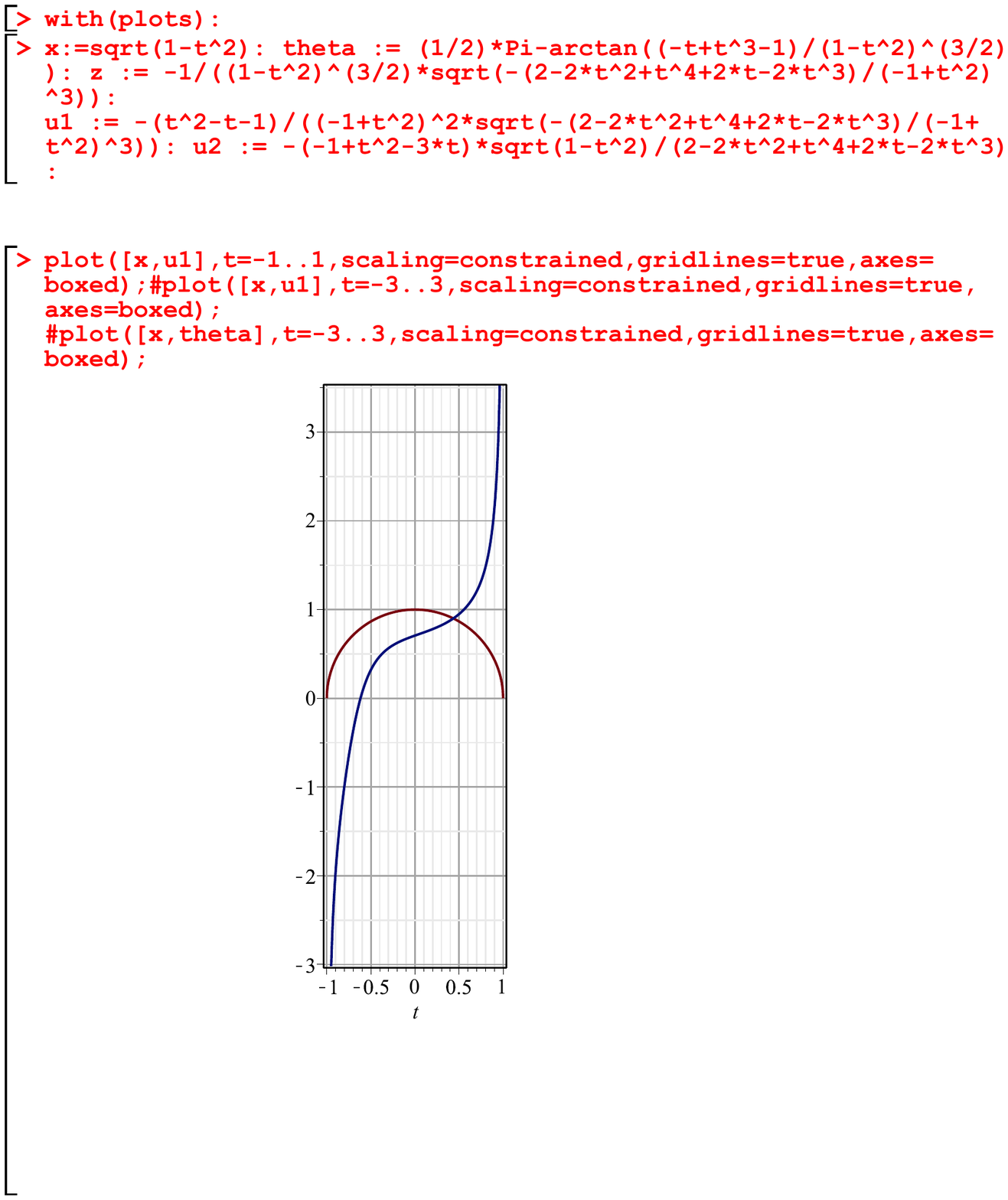}\ \includegraphics[height=6cm,width=4.5cm]{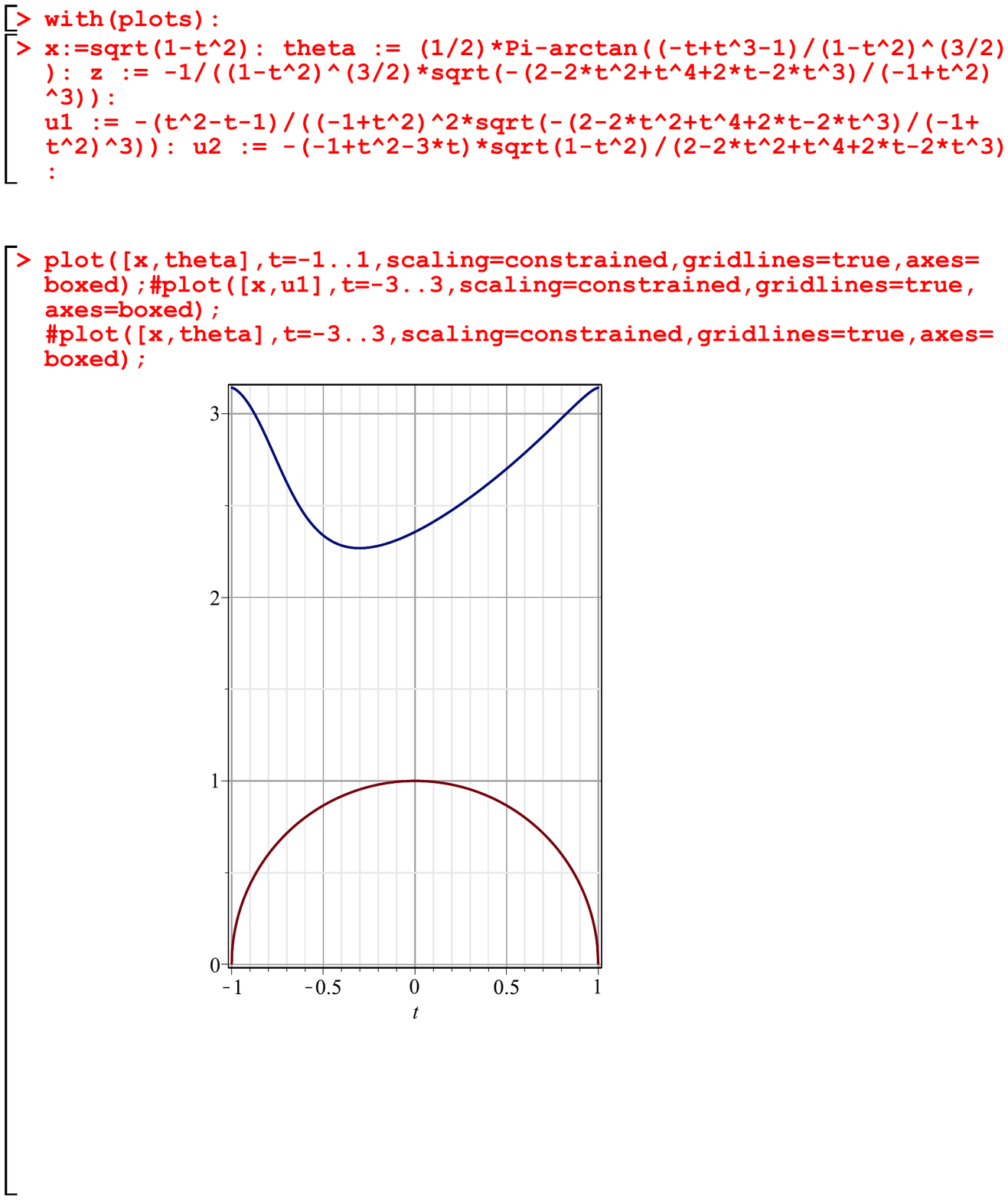}

\begin{center}
{\small \bf Fig. 3. Graphs of $\bsy{x}\ \&\ \bsy{z}$,\ \ $\bsy{x}\ \&\ \bsy{u_1}$,\ \ $\bsy{x}\ \&\ \bsy{\theta}$, respectively, for the upper semi-circle $\ds \bsy{x}=\sqrt{\bsy{1-t^2}}$}
\end{center}

Variables $x,\ z,\ \th$ and $u_2$ remain bounded for $t\in [-1,1]$  but
$$
u_1(t)=\frac{t^2-t-1}{\sqrt{2-2t^2+t^4+2t-2t^3}\sqrt{1-t^2}}
$$
becomes infinite in the limit $t\to\pm 1$, indicating that we are required to change the parametrisation in the neighborhood of points $t=\pm 1$.

\subsection{Flatness of the under-actuated ship}
If a control system is either static feedback linearizable or else some prolongation of it is static feedback linearizable then it satisfies the definition of flatness \cite{LevineBook}. The under-actuated ship is a fully nonlinear control system in four states and two controls. According to the Sluis-Tilbury bound \cite{SluisTilbury}, if (\ref{shipPfaff}) were static feedback linearizable after prolongation then  a maximal 7-fold prolongation in either the $\P {u_1}$ or $\P {u_2}$ directions would be required.  

\begin{prop}
The under-actuated ship is flat, requiring four prolongations in direction $\P {u_2}$.
\end{prop}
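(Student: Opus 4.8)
The plan is to exhibit a four-fold prolongation in the $\P {u_2}$-direction that is static feedback linearizable, and then to show that no prolongation of smaller order in that direction is linearizable at all, so the number four is forced. Writing the ship distribution (\ref{shipPfaff}) as $\CE=\{Z_0,\ \P {u_1},\ \P {u_2}\}$ with $Z_0$ its first listed generator, introduce new coordinates $w_0=u_2,\ w_1,\ w_2,\ w_3$ and a fresh control $w_4$ and form the rank-$3$ distribution
$$
\text{pr}^4\CE=\Big\{\wt Z+w_1\P {w_0}+w_2\P {w_1}+w_3\P {w_2}+w_4\P {w_3},\ \ \P {u_1},\ \ \P {w_4}\Big\}
$$
on the $11$-manifold with coordinates $(t,x,y,\th,z,w_0,w_1,w_2,w_3,u_1,w_4)$, where $\wt Z$ is $Z_0$ with $u_2$ replaced by $w_0$; denote the first generator of $\text{pr}^4\CE$ by $W_0$.

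First I would compute the refined derived type $\mathfrak{d}_r(\text{pr}^4\CE)$ with the {\bf\tt DifferentialGeometry}-based software used earlier and check that its type numbers satisfy Proposition \ref{derivedtype}. One expects the signature $\s=\mathrm{decel}(\text{pr}^4\CE)=\langle 0,0,1,0,1\rangle$, i.e.\ two integrator chains of lengths $3$ and $5$ --- the chain of length $5$ because $u_2=\dot\th$ has now been differentiated four further times. The derived length is then $k=5>1$ and, the two chains having distinct lengths, the final deceleration entry equals $1$, so item 3.\ of Definition \ref{GoursatBundle} is vacuous; it remains only to verify item 2., that each Cauchy intersection $\ch{(\text{pr}^4\CE)}^{(i)}_{i-1}$ is integrable, which is a routine Frobenius calculation. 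Thus $\text{pr}^4\CE$ is a Goursat bundle, and Theorem \ref{GGNF} supplies a local diffeomorphism $\varphi$ with $\varphi_*(\text{pr}^4\CE)=\CC\langle\s\rangle$: the prolonged system is linearizable.

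Next I would upgrade $\varphi$ to an extended static feedback transformation by checking the two hypotheses of Theorem \ref{StatFeedbackLin} for $\text{pr}^4\CE$ (whose derived length $k=5>1$). Condition $1)$, $\{\P {u_1},\P {w_4}\}\subseteq\ch{(\text{pr}^4\CE)}^{(1)}_0$, follows from $[\P {u_1},\P {w_4}]=0$ together with $[W_0,\P {w_4}]=-\P {w_3}$ and the explicit form $[W_0,\P {u_1}]=-\cos\th\,\P x-\sin\th\,\P y+w_0\P z$, neither of which introduces a $t$- or $w_4$-component. Condition $2)$, $dt\in\mathrm{ann}\,\ch{(\text{pr}^4\CE)}^{(k-1)}$, holds because (\ref{shipPfaff}) is autonomous, so $W_0$ is the only generator of any derived bundle of $\text{pr}^4\CE$ carrying a $\P t$-component, and a short computation shows $W_0\notin\ch{(\text{pr}^4\CE)}^{(4)}$. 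Theorem \ref{StatFeedbackLin} then makes $\varphi$ an ESFT, so $\text{pr}^4\CE$ is static feedback linearizable; by the definition of flatness \cite{LevineBook}, the under-actuated ship (\ref{shipPfaff}) is flat. Its flat outputs are obtained as the fundamental functions delivered by procedure {\tt\bf Contact} \cite{Vassiliou2006b} applied to $\text{pr}^4\CE$.

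Finally, to justify ``requiring four'' I would verify that for $0\le j\le 3$ the $j$-fold prolongation $\text{pr}^j\CE$ in the $\P {u_2}$-direction is not locally equivalent to any partial prolongation: computing $\mathfrak{d}_r(\text{pr}^j\CE)$ and testing Proposition \ref{derivedtype} should expose a persistent discrepancy of the kind already visible at $j=0$, where $\mathfrak{d}_r(\CE)=[\,[3,0],[5,0,0],[7,7]\,]$ fails the relation $\chi^1=2m_1-m_2-1$ (namely $0\ne 2$). Since Proposition \ref{derivedtype} records \emph{necessary} conditions, $\text{pr}^j\CE$ is then not linearizable by any local diffeomorphism, hence a fortiori not static feedback linearizable. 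Therefore exactly four prolongations in the $\P {u_2}$-direction are required, a figure well inside the Sluis--Tilbury estimate of $7$ for this system \cite{SluisTilbury}. The main obstacle is the derived-flag bookkeeping for $\text{pr}^4\CE$ and, in particular, the verification that the Cauchy-intersection bundles $\ch{(\text{pr}^4\CE)}^{(i)}_{i-1}$ are integrable; were the two controllability indices to coincide (they should not here), one would also have to establish integrability of the resolvent bundle of the top-level Weber structure, which is the genuinely delicate point.
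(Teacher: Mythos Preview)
Your strategy for showing that $\text{pr}^4\CE$ is a Goursat bundle and then invoking Theorems \ref{GGNF} and \ref{StatFeedbackLin} is exactly the paper's approach, and your outline of those steps is sound.

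The genuine gap is in your minimality argument. You propose to rule out $\text{pr}^j\CE$ for $0\leq j\leq 3$ by showing that the type numbers fail Proposition \ref{derivedtype}. This works for $j=0$ (and plausibly $j=1,2$), but at $j=3$ it does \emph{not}: one computes
\[
\mathfrak{d}_r(\text{pr}^3\CE)=\big[\,[3,0],\ [5,2,2],\ [7,4,4],\ [9,6,7],\ [10,10]\,\big],
\]
and these type numbers \emph{do} satisfy Proposition \ref{derivedtype}, with $\text{decel}(\text{pr}^3\CE)=\langle 0,0,1,1\rangle$. So $\text{pr}^3\CE$ has the refined derived type of the partial prolongation $\CC\langle 0,0,1,1\rangle$, and your proposed test would not detect any obstruction. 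The actual obstruction at $j=3$ is item 2.\ of Definition \ref{GoursatBundle}: the intersection $\ch\big(\text{pr}^3\CE\big)^{(3)}_2$ turns out to be non-integrable, so $\text{pr}^3\CE$ is \emph{not} a Goursat bundle despite having the correct type numbers. (This is in fact the paper's illustration of why the integrability hypothesis in Definition \ref{GoursatBundle} cannot be dropped.) To repair your argument you must, at least for $j=3$, go beyond the numerical type data and verify directly that one of the Cauchy-intersection bundles fails Frobenius.
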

\vskip 4 pt
\noindent {\it Proof.} If we differentiate $u_2$ four times in the control system for the underactuated ship $\CE$, (\ref{shipPfaff}), we obtain 
$$
\begin{aligned}
\text{pr}^4\CE=\Big\{\P t+(u_1\cos\theta -z\sin\theta)\P x+(u_1\sin\theta+z\cos\theta)\P y-(&u_1u_2+z)\P z+u_2\P \theta+v_1\P {u_2}+v_2\P {v_1}+\cr
 &v_3\P {v_2}+v_4\P {v_3},\ \P {v_4},\ \P {u_1}\Big\}=\{\mathbf{T},\ \P {v_4},\ \P {u_1}\}.
\end{aligned}
$$
Its refined derived type is
$$
\mathfrak{d}_r(\text{pr}^4\CE)=[\,[3, 0],\ [5, 2, 2],\ [7, 4, 4],\ [9, 6, 7],\ [10, 8, 8],\ [11, 11]\,].
$$
Since $\textrm{decel}(\text{pr}^4\CE)=\langle 0,0,1,0,1\rangle$ (Definition \ref{decel}), this is the refined derived type of $\mathcal{C}\langle 0,0,1,0,1\rangle$. From this we see that among quotients 
$\Xi^{(j)}_{j-1}/\Xi^{(j)}$  only $\Xi^{(3)}_{2}/\Xi^{(3)}$ is non-trivial and 
$\ch\left(\text{pr}^4\CE^{(3)}_2\right)$ is integrable. Since the derived length is $k=5$ with $\D_5=1$, $\text{pr}^4\CE\simeq \mathcal{C}\langle 0,0,1,0,1\rangle$ by Theorem \ref{GGNF}.  We complete the proof by checking that equivalences can be chosen to be static feedback transformations. We do this in a separate calculation below using procedure {\tt\bf Contact}, \cite{Vassiliou2006b}.
\hfill\qed


\vskip 10 pt
\noindent{\it Flat outputs and explicit linearization.} Construct the static feedback linearization using procedure {\bf \tt Contact A} described in \cite{Vassiliou2006b}. The only nontrivial quotient is $\Xi^{(3)}_{2}/\Xi^{(3)}$ spanned by
$$
\a=\big(2u_2^2\cos\theta -(v_1+u_2)\sin\theta\big)\,dx+\big(2u_2^2\sin\theta+(v_1+u_2)\cos\theta\big)\,dy+2u_2\,dz,
$$
and we note that
$$
\Xi^{(3)}=\{dv_1,\ du_2,\ d\theta,\ dt\}.
$$
By the integrability of $\Xi^{(3)}_2$, we have
$$
d\a\equiv 0\mod \a,\ dv_1,\ du_2,\ d\theta.
$$
By the Frobenius theorem, there is a function $a=a(x,y,z,\theta,v_1,u_2)$ and integrating factor $\mu$ such that $\mu da=\a\mod \ dv_1,\ du_2,\ d\theta$.  Function $a$ is a fundamental function of order 3 and therefore one of the two flat outputs. We calculate
$$
a=\big(2yu_2^2-x(v_1+u_2)\big)\sin\theta+\big(2xu_2^2+y(v_1+u_2)\big)\cos\theta+2zu_2
$$

For completeness, we calculate that $\text{ann}\,\Pi^5=\{dt,\ d\theta\}$ in which case the fundamental function of highest order $5$ is $b=\theta$. This is the final flat output. Going on to differentiate $a$ and $b$ by $\mathbf{T}$ (three times for $a$ and five times for $b$) constructs the linearisation. This is easy to invert but the formulas are very complicated and unhelpful for trajectory planning.  Thus, while the under-actuated ship is flat, its flatness does not enable trajectory planning of surface trajectories in an obvious way. Indeed, 
let $s(t)$ and $r(t)$ be arbitrary, $s_i=\partial^i_t s, \ r_j=\partial^j_t r$ with $\ell=s_1^2+4s_1^4+2s_1s_3-3s_2^2$. Then, for instance,
$$
\begin{aligned}
&\ell^2 x=\big(4rs_1^5-6rs_1s_2s_3+2s_1^3r_1+rs_1^3+9rs_2^3+8s_1^5r_1+4r_1s_1^2s_3+2rs_1^2s_3-\cr
&6r_1s_1s_2^2-3rs_1s_2^2-12rs_1^4s_2-3rs_1^2s_2\big)\sin\,s+\big(4r_1s_1^4-4rs_1^6+4r_2s_1^4-rs_1^4+r_1s_1^2+r_2s_1^2+\cr
&3rs_2^2-3r_1s_2^2-3r_2s_2^2-4rs_3^2+2rs_2s_3+3rs_2s_4+4r_1s_2s_3+2r_1s_1s_3-2rs_1s_3-2r_1s_1s_4-\cr
&rs_1s_4-8rs_1^3s_2-16r_1s_1^3s_2+27rs_1^2s_2^2-rs_1s_2-2r_1s_1s_2+2r_2s_1s_3-10rs_1^3s_3\big)\cos\,s.
\end{aligned}
$$
The expressions for $y$ and $z$ have similar complexity. 


\begin{rem}

This example demonstrates the fact that the hypothesis that $\ch\CE^{(j)}_{j-1}$ be integrable cannot be dropped from the definition of Goursat bundle. For suppose we differentiate $u_2$ three times instead of four. Then a calculation reveals the refined derived type of the 3-times prolonged ship control system $\text{pr}^3\CV$ is
$$
\mathfrak{d}_r(\text{pr}^3\CE)=\left[\,\left[3, 0\right],\ \left[5, 2, 2\right],\ \left[7, 4, 4\right],\ \left[9, 6, 7\right],\ \left[10, 10\right]\,\right].
$$
Now $\text{decel}\left(\text{pr}^3\CE\right)=\langle 0, 0, 1, 1 \rangle$ and $\ch\left(\text{pr}^3\CE\right)=\{0\}$. Hence $\text{pr}^3\CE$ has the refined derived type of the partial prolongation 
$\mathcal{C}\langle 0, 0, 1, 1 \rangle$. 
The quotient $\Xi^{(3)}_2/\Xi^{(3)}$ is non-trivial however  $\ch\left(\text{pr}^3\CE^{(3)}_2\right)$ is not integrable! Hence even though $\text{pr}^3\CE$ has the derived type of a partial prolongation  it cannot be equivalent to one.
\begin{rem}
State-space symmetries play a central role in foundational paper \cite{GrizzleMarcus} and such symmetries are used in our subsection \ref{non-flat}. However, the foregoing application demonstrates the importance of the control admissible symmetries introduced in Definition \ref{admissibleSymmetries}, of which state space symmetries form a special subclass. As we saw, in the symmetry reduction and reconstruction of 
(\ref{shipcontrol}), the most useful symmetries are 
non state-space.    
\end{rem}
\end{rem}

\subsection{Application to more sophisticated control systems for ship guidance}

A widely used control system for the guidance of surface marine vessels is the system $\o^i=0, \ 1\leq i\leq 6$ where, 
\begin{equation}\label{shipPfaff1}
\begin{aligned}
&\o^1=dx-(u\cos\th-z\sin\th)dt,\ \ \o^2=dy-(u\sin\th+z\cos\th)dt, \ \ \o^3=d\th-vdt,\cr   
&\o^4=dz+(\g_1 uv+\b_1 z)dt,\ \ \o^5=du-\left(\frac{1}{\g_1} zv-\b_2 u+u_1\right)dt, \ \o^6=dv-\left( \g_2 uz-\b_3 v+u_2\right)dt,
\end{aligned}
\end{equation}
and
$$
\g_2=\frac{m_{11}-m_{22}}{m_{33}},\ \g_1=\frac{m_{22}}{m_{11}},\ \b_1=\frac{d_1}{m_{11}},\ \b_2=\frac{d_2}{m_{22}},\ \b_3=\frac{d_3}{m_{33}}.
$$
Here the $m_{ii}>0$ are components of the mass tensor while the $\b_i$ are the coefficients of hydrodynamic damping. See,  for instance, \cite{Laghrouche}, \cite{fossen}.
In (\ref{shipPfaff1}) just as in (\ref{shipPfaff}), the heave, pitch and roll motions are ignored. However (\ref{shipPfaff1}) extends (\ref{shipPfaff}) by the 1-forms $\{\o^5, \ \o^6\}$, providing a more realistic control system which includes the hydrodynamic damping associated to surge ($u$) and yaw ($v$), in addition to that of sway $(z)$. The only other assumptions made in (\ref{shipPfaff1}) are that the inertia, added mass and hydrodynamic damping matrices \cite{fossen} are diagonal. The properties of the admissible symmetries here as in the previous case have dependence on the choice of parameters. In case $m_{ii}=d_i=1$ for $1\leq i\leq 3$, then the Lie algebra of admissible symmetries of (\ref{shipPfaff1}) is very similar to that of (\ref{shipPfaff}).  For instance, we find an abelian subalgebra of admissible symmetries is given by $\bsy{\G}=\{e^{-t}(-\P y+\sin\th\P u+\cos\th\P z), \P y\}$; {\it cf} (\ref{shipSyms}). Denoting 
$\CV=\ker\bsy{\o}$, where 
$\bsy{\o}=\{\o^i\}_{i=1}^6$, we calculate that the extended distribution $\widehat{\CV}=\CV\oplus\bsy{\G}$ has refined derived type 
$$
\mathfrak{d}_r(\widehat{\CV})=[\,[5,\ 2],\ [7,\ 4,\ 4],\ [9,\ 9]\,].
$$   
From $\mathfrak{d}_r(\widehat{\CV})$ we have $\text{decel}\,\widehat{\CV}=\langle 0, 2\rangle$. Next check that $\widehat{\CV}$ is a relative Goursat bundle. Since $k=2$, we must compute 
$$
\ch\widehat{\CV}^{(1)}=\bsy{\G}\oplus \{\P {u_1},\  \P {u_2}\}
$$
and find that the resolvent bundle is $R(\CV^{(1)})=\{\P u, \P v\}\oplus\ch\widehat{\CV}^{(1)}$ whose integrability implies that $\widehat{\CV}$ is a relative Goursat bundle with signature $\langle 0, 2\rangle$. 
By Theorem  \ref{quotientCheck}, $\CV/G$ is locally diffeomorphic to $\CC\langle 0, 2\rangle$. Since the action is admissible and $2=\dim G<\dim\mathrm{X}(M)=6$, by Theorem \ref{WhenQuotientIsControl} $\CV/G$ is a control system in 4 states and 2 controls. Finally
we observe that $dt\in \text{ann}\,\ch\widehat{\CV}^{(1)}$ and $\{\P {u_1}, \P {u_2}\}\subseteq \ch\widehat{\CV}^{(1)}$ so the hypotheses of Theorem \ref{StatLinQuotients} are satisfied and the quotient $\CV/{G}$ is static feedback linearizable for these parameter values. That is, $\CV/G$ is locally equivalent to Brunovsky normal form $\CC\langle 0,2\rangle$ via a static feedback transformation.

If instead $m_{11}=2$, with all other parameters $m_{ii}, d_j$ taken to be unity then we find that $\bsy{\o}$ has a 4-dimensional Lie algebra of admissible symmetries spanned by
$$
X_1=-y\P x+x\P y+\P \th,\ \ \ \ \ X_2=x\P x+y\P y+u\P u+z\P v+u_1\P {u_1}-2uz\P {u_2},\ \ \ \ \ X_3=\P x, \ \ \ \ \ X_4=\P y.
$$
Now $\bsy{\G}=\{X_2, X_3\}$, for instance, is a (non-abelian) subalgebra. Again, in this case, we find that $\widehat{\CV}=\CV\oplus\bsy{\G}$ is a relative Goursat bundle of signature $\langle 0,2\rangle$ which satisfies Theorem \ref{StatLinQuotients} for the existence of a static feedback linearizable quotient, $\CV/G$. We will report in detail elsewhere on this as well as the further investigation of quotients of under-actuated ship control systems and the planning of surface trajectories using Lie symmetries.

\section{Appendix}

\begin{prop}
Let $(M,\bsy{\o})$ be a Pfaffian system invariant under the regular action on $M$ of Lie group $G$ with quotient map $\mathbf{q}:M\to M/G$ and cross-section $\s:M/G\to M$. Assume that the quotient of $\bsy{\o}$ by $G$ is also a Pfaffian system $\bsy{\bar{\o}}$. Then $\bsy{\bar{\o}}=\s^*\bsy{\o}_{\mathrm{sb}}$, where $\bsy{\o}_{\mathrm{sb}}$ are the $G$-semi-basic 1-forms on $M$.
\end{prop}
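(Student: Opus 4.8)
The plan is to establish the inclusion $\bsy{\bar{\o}}\subseteq\s^*\bsy{\o}_{\mathrm{sb}}$ directly and then upgrade it to an equality by exhibiting $\s^*\bsy{\o}_{\mathrm{sb}}$ as a Pfaffian system on $M/G$ with the defining property of the quotient, so that maximality of $\bsy{\bar{\o}}$ (Definition~\ref{QuotientDef}) forces the two to coincide. For the inclusion, let $\bar{\th}\in\bsy{\bar{\o}}$. By Definition~\ref{QuotientDef}, $\mathbf{q}^*\bar{\th}\in\bsy{\o}$, and $\mathbf{q}^*\bar{\th}$ is semi-basic since $\mathbf{q}_*V=0$ for every infinitesimal generator $V\in\bsy{\G}$, giving $(\mathbf{q}^*\bar{\th})(V)=\bar{\th}(\mathbf{q}_*V)=0$. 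Hence $\mathbf{q}^*\bar{\th}\in\bsy{\o}_{\mathrm{sb}}$, and because $\mathbf{q}\circ\s=\mathrm{id}_{M/G}$ we have $\s^*\mathbf{q}^*=\mathrm{id}$ on $\Omega^*(M/G)$, so $\bar{\th}=\s^*(\mathbf{q}^*\bar{\th})\in\s^*\bsy{\o}_{\mathrm{sb}}$.

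It then remains to check that $\s^*\bsy{\o}_{\mathrm{sb}}$ is a Pfaffian system on $M/G$ and that $\mathbf{q}^*(\s^*\th)\in\bsy{\o}$ for every $\th\in\bsy{\o}_{\mathrm{sb}}$; granting these, $\s^*\bsy{\o}_{\mathrm{sb}}$ is a Pfaffian system enjoying the property in Definition~\ref{QuotientDef} and containing $\bsy{\bar{\o}}$, so maximality yields $\bsy{\bar{\o}}=\s^*\bsy{\o}_{\mathrm{sb}}$. For the constant-rank point: by the hypothesis together with \cite{AF} (Lemma~7.2), $\bsy{\o}_{\mathrm{sb}}$ is a Pfaffian system, of rank $k$ say; and since $\s$ is a cross-section, $\mathbf{q}_*\s_*=\mathrm{id}$ gives at each $\bar{x}$ the transverse splitting $T_{\s(\bar{x})}M=\s_*T_{\bar{x}}(M/G)\oplus\bsy{\G}_{\s(\bar{x})}$. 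A semi-basic $1$-form whose $\s$-pullback vanishes at $\bar{x}$ annihilates both summands, hence is zero; thus $\s^*$ is fibrewise injective on $\bsy{\o}_{\mathrm{sb}}$ and $\s^*\bsy{\o}_{\mathrm{sb}}$ has constant rank $k$.

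The heart of the argument is $\mathbf{q}^*(\s^*\th)\in\bsy{\o}$. Since the $G$-action is free and regular, the equation $\mu(\g(x),(\s\circ\mathbf{q})(x))=x$ determines a smooth map $\g:M\to G$ on a neighbourhood of any point, so $\Phi:=\s\circ\mathbf{q}$ satisfies $\Phi(x)=\mu(\g(x)^{-1},x)$. Differentiating the action, $\Phi_*v=(\mu_{\g(x)^{-1}})_*v+\xi$ for $v\in T_xM$, where $\xi\in\bsy{\G}_{\Phi(x)}$ is the value at $\Phi(x)$ of the fundamental vector field determined by the Lie-algebra component of $d(\g^{-1})_x(v)$. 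Hence, for $\th\in\bsy{\o}_{\mathrm{sb}}$,
$$
(\Phi^*\th)_x(v)=\th_{\Phi(x)}\big((\mu_{\g(x)^{-1}})_*v\big)+\th_{\Phi(x)}(\xi)=\big(\mu_{\g(x)^{-1}}^*\th\big)_x(v),
$$
the last term vanishing by semi-basicness. Fixing $x$ and writing $g_0=\g(x)^{-1}$, the invariance $\mu_{g_0}^*\bsy{\o}\subseteq\bsy{\o}$ gives $\mu_{g_0}^*\th\in\bsy{\o}$, so $(\Phi^*\th)_x=(\mu_{g_0}^*\th)_x\in\bsy{\o}_x$. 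As $x$ is arbitrary, $\Phi^*\th$ lies pointwise in the constant-rank Pfaffian system $\bsy{\o}$ and is therefore a section of it, i.e. $\mathbf{q}^*\s^*\th=\Phi^*\th\in\bsy{\o}$.

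The main obstacle is the computation of the previous paragraph: splitting $\Phi_*v$ correctly through the derivative of the group action and confirming that the correction term $\xi$ is genuinely the value of an infinitesimal generator of the $G$-action (so that the semi-basic hypothesis on $\th$ applies), together with the minor bookkeeping that $g_0=\g(x)^{-1}$, though varying with $x$, may be held fixed at the chosen $x$ in order to conclude pointwise membership in $\bsy{\o}$. The local existence and smoothness of $\g$ is routine given that the action is free and regular, and the Pfaffian hypothesis on the quotient enters only through the constant-rank assertion in the second paragraph.
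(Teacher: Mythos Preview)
Your proof is correct and, for the reverse inclusion $\s^*\bsy{\o}_{\mathrm{sb}}\subseteq\bsy{\bar{\o}}$, takes a genuinely different route from the paper (the first inclusion $\bsy{\bar{\o}}\subseteq\s^*\bsy{\o}_{\mathrm{sb}}$ is argued identically in both). The paper proceeds by asserting that each $\th_{\mathrm{sb}}\in\bsy{\o}_{\mathrm{sb}}$ satisfies $\th_{\mathrm{sb}}(\bsy{\G})=d\th_{\mathrm{sb}}(\bsy{\G})=0$ and is therefore the $\mathbf{q}$-pullback of some $\bar\th$ on $M/G$, whence $\s^*\th_{\mathrm{sb}}=(\mathbf{q}\circ\s)^*\bar\th=\bar\th\in\bsy{\bar{\o}}$. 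You instead show $\mathbf{q}^*\s^*\th\in\bsy{\o}$ pointwise via the factorisation $\s\circ\mathbf{q}=\mu_{\g(\cdot)^{-1}}$ together with semi-basicness, and then invoke the maximality clause in Definition~\ref{QuotientDef}. Your path is longer and requires the auxiliary constant-rank check, but it is arguably more robust: an individual semi-basic section of a $G$-invariant Pfaffian system need not itself be $G$-invariant (for instance $x\,dy$ in the system generated by $dy$ under translation in $x$), so the paper's step $d\th_{\mathrm{sb}}(\bsy{\G})=0$ tacitly presumes a passage to $G$-invariant generators of $\bsy{\o}_{\mathrm{sb}}$, whereas your argument uses only the module-level invariance $\mu_g^*\bsy{\o}\subseteq\bsy{\o}$ exactly as hypothesised.
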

\noindent{\it Proof.} If $\bar{\th}\in\bsy{\bar{\o}}$ then $(\mathbf{q}^*\bar{\th})(X)=\bar{\th}(\mathbf{q}_*X)=\bar{\th}(0)=0$ for all $X\in\bsy{\G}$, where $\bsy{\G}$ is the Lie algebra of infinitesimal generators of the $G$-action. Hence there is a semi-basic $\th_{\text{sb}}\in\bsy{\o}_{\text{sb}}$ such that $\mathbf{q}^*\bar{\th}=\th_{\text{sb}}$ and hence
$\bar{\th}=(\mathbf{q}\circ\s)^*\bar{\th}=\s^*\mathbf{q}^*\bar{\th}=\s^*\th_{\text{sb}}$ so that $\bsy{\bar{\o}}\subseteq \s^*\bsy{\o}_{\text{sb}}$.

Fix $\th_{\text{sb}}\in\bsy{\o}_{\text{sb}}$. Then $\th_{\text{sb}}(\bsy{\G})=d\th_{\text{sb}}(\bsy{\G})=0$ and hence $\th_{\text{sb}}$ is the pullback by $\mathbf{q}$ of a 1-form $\bar{\th}$ on $M/G$. Thus
$\th_{\text{sb}}=\mathbf{q}^*\bar{\th}$ implies that $\s^*\th_{\text{sb}}=\s^*\mathbf{q}^*\bar{\th}=(\mathbf{q}\circ\s)^*\bar{\th}=\bar{\th}\in\bsy{\bar{\o}}$. Hence, 
$\s^*\bsy{\o}_{\text{sb}}\subseteq\bsy{\bar{\o}}$.\hfill\qed

\vskip 30 pt
\begin{center}
{\bf Procedure Contact A}\end{center} 
\begin{center}(Case $\D_k=1$)\end{center}

\begin{enumerate}
\item[]{\bf\tt INPUT:} Goursat bundle $\CV\subset TM$ $\big(\bsy{\o}\subset T^*M\big)$ of derived length $k$ and signature
$\sigma=\break\text{\rm decel}(\CV)$=$\langle\r_1,\ldots,\r_k\rangle$,\  $\r_k=1$.

\item[a)] Fix any invariant of $\ch\CV^{(k-1)}$ denoted $x$, and any section $\boldsymbol{Z}$
of $\CV$ such that $\boldsymbol{Z}x=1$.
\item[b)] Build distribution $\Pi^k$, defined inductively by 
$$
\Pi^{l+1}=[Z, \Pi^l],\ \ \ \Pi^1=\ch\CV^{(1)}_0,\ \ \ \ 1\leq l\leq k-1.
$$
\item[c)] Let $z^k=\varphi^{1,k}$ be an invariant of $\Pi^k$ such that $dx\wedge d\varphi^{1,k}\neq 0$. 
\item[d)] For each $j$, such that $\r_j>0$, compute the fundamental bundle 
$\Xi^{(j)}_{j-1}\big/\Xi^{(j)}$ of order $j$.
\item[e)] For each $j$, such that $\Xi^{(j)}_{j-1}\big/\Xi^{(j)}$ is non-trivial, compute the fundamental functions
$\{\varphi^{l_j,j}\}_{l_j=1}^{\r_j}$ of order $j$. [{\it This and step $\mathrm{c)}$ are the only ones requiring integration. The remaining steps require differentiation and linear algebra, alone}.]
\item[f)] For each $j$, such that $\r_j>0$ let $z^{l_j,j}=\varphi^{l_j,j}$, $1\leq l_j\leq \r_j$.
\item[g)] For each $j$, such that $\r_j>0$ define functions
$$
x, z^{l_j,j}_0:=z^{l_j,j}=\varphi^{l_j,j},\ z^{l_j,j}_{s_j+1}=\boldsymbol{Z}z^{l_j,j}_{s_j},\ 0\leq s_j\leq j-1,\ 1\leq l_j\leq \r_j.
$$
\item[]{\bf\tt OUTPUT:} Contact coordinates for $\CV$ \big($\bsy{\o}$\big) identifying it with $\mathcal{C}(\s)$ \big($\mathrm{ann}\,{\CC}(\s)$\big).
\end{enumerate}
\vskip 10 pt
{\it Case $\D_k>1$.} If the Goursat bundle $\CV$ satisfies $\D_k>1$ then steps a) and b) are replaced by the calculation of the resolvent bundle ${R}(\CV^{(k-1)})$ which is integrable and has $1+\D_k$ invariants; these are fundamental functions of highest order, $k$. Any one of these can be taken to be the ``independent" variable, $x$, in the canonical form. We then fix any section $\boldsymbol{Z}$ of $\CV$ such that $\boldsymbol{Z}x=1$ after which we proceed, as in the case $\D_k=1$, to construct contact coordinates.  Proofs of correctness of these procedures are given in \cite{Vassiliou2006b}. For convenience, the stepwise method is described below.

\vskip 20 pt

\begin{center}{\bf Procedure Contact B}\end{center} 
\begin{center}(Case $\D_k>1$)\end{center}
\begin{enumerate}
\item[] {\tt \bf INPUT:} Goursat bundle $\CV\subset TM$ \big(or $\bsy{\o}\subset T^*M$\big) of derived length $k$ and signature
$\s=\text{\rm decel}(\CV)=\langle \r_1,\ldots,\r_k\rangle$,\ $\r_k>1$.
\item[a)] For each $j$, $1\leq j\leq k-1$ such that $\r_j>0$, compute a basis for the fundamental bundle 
$\Xi^{j}_{j-1}\big/\Xi^{j}$ of order $j$.
\item[b)] Compute a basis for fundamental bundle of order $k$:
$\text{ann}\left({R}(\CV^{(k-1)})\right)$
\item[c)] For each $j$, $1\leq j\leq k-1$, such that $\Xi^{j}_{j-1}\big/\Xi^{j}$ non-trivial, compute the fundamental functions $\{\varphi^{l_j,j}\}_{l_j=1}^{\r_j}$ of order $j$ and the fundamental functions of order $k$ from $\text{ann}\left({R}(\CV^{(k-1)})\right)$
\item[d)] Fix any fundamental function of order $k$, denoted $x$ and any section $\mathbf{Z}$ of $\CV$\ \big($\mathbf{Z}$ of $\ker\bsy{\o}$\big) such that $\mathbf{Z}x=1$. [{\it This and step $\mathrm{c)}$ are the only ones requiring integration. The remaining steps require differentiation and linear algebra, alone}.]
\item[e)] For each $j$, such that $\r_j>0$ let $z^{l_j,j}=\varphi^{l_j,j}$, $1\leq l_j\leq \r_j$.
\item[f)] For each $j$, such that $\r_j>0$ define functions
$$
x, \ z^{l_j,j}_0:=z^{l_j,j}=\varphi^{l_j,j},\ z^{l_j,j}_{s_j+1}=\mathbf{Z}z^{l_j,j}_{s_j},\ 0\leq s_j\leq j-1,\ 1\leq l_j\leq\r_j.
$$
\item[] {\tt\bf  OUTPUT:} Contact coordinates for $\CV$ \big($\bsy{\o}$\big) identifying it with $\mathcal{C}(\s)$\ 
\big($\mathrm{ann}\,{\CC}(\s)$\big).
\end{enumerate}

\end{document}